\documentclass[a4paper,10pt]{article}

\usepackage{geometry}
\usepackage[tbtags]{mathtools}
\usepackage{listings}
\usepackage{xcolor}
\usepackage{amsthm}
\usepackage{mathrsfs}
\usepackage{amssymb}
\usepackage{comment}

\usepackage{graphicx}
\usepackage{url}
\usepackage[numbers,sort&compress]{natbib}
\usepackage[hidelinks]{hyperref}
\allowdisplaybreaks[4]

\newtheorem{theorem}{Theorem}[section]
\newtheorem{lemma}[theorem]{Lemma}
\newtheorem{corollary}[theorem]{Corollary}
\newtheorem{proposition}[theorem]{Proposition}
\newtheorem{conjecture}[theorem]{Conjecture}
\newtheorem{problem}[theorem]{Problem}
\newtheorem{definition}[theorem]{Definition}
\newtheorem{remark}[theorem]{Remark}
\newtheorem{claim}{Claim}

\DeclareMathOperator{\Spec}{Spec}

\begin{document}

\title{\Large On Spielman's Laplacian Eigenratio Conjecture and Related Problems}

\author{
Jie Ma\footnote{School of Mathematical Sciences, University of Science and Technology of China, Hefei 230026, China.}
\footnote{Yau Mathematical Sciences Center, Tsinghua University, Beijing 100084, China.}
\and
Quanyu Tang\footnote{School of Mathematics and Statistics, Xi'an Jiaotong University, Xi'an 710049, China.}
\and
Yuchang Wang\footnotemark[1]
\and
Zhiheng Zheng\footnotemark[1]
}

\maketitle

\begin{abstract}
Let $G$ be an $n$-vertex graph with Laplacian eigenvalues $0=\lambda_1(G)\le \lambda_2(G)\le\cdots\le \lambda_n(G)$.
Motivated by the Alon--Boppana bound and the Ramanujan phenomenon for regular graphs, Spielman conjectured that, for every graph $G$ with fixed average degree $d\ge 1$, its {\it Laplacian eigenratio} satisfies
$$
\frac{\lambda_2(G)}{\lambda_n(G)}
\le
\frac{d-2\sqrt{d-1}}{d+2\sqrt{d-1}}+o_n(1),
$$
where $o_n(1)\to 0$ as $n\to\infty$.
The main purpose of this paper is to investigate this conjecture. We show that the situation is mixed. On the negative side, the conjecture fails for infinitely many average degrees $d>2$, via constructions based on bipartite Ramanujan graphs. On the positive side, it holds in two important settings: we verify it for all average degrees $d\le 2$, and we prove it for all regular graphs. In fact, for regular graphs we obtain stronger bounds comparing higher Laplacian eigenvalues. As a consequence, we show that for every fixed $d\ge 3$ and every $\varepsilon>0$, every sufficiently large $d$-regular Ramanujan graph has linearly many adjacency eigenvalues below $-2\sqrt{d-1}+\varepsilon$, thereby strengthening earlier results of Li and Cioab\u{a} by giving an unconditional result of this form. We also settle two related conjectures: one of You and Liu concerning the maximum Laplacian eigenratio of trees, and one of Gu concerning the Hamiltonicity of graphs with large Laplacian eigenratio.
\end{abstract}

\section{Introduction}

All graphs in this paper are finite and simple.
Let \(G=(V(G),E(G))\) be a graph with \(V(G)=\{v_1,\dots,v_n\}\).
Its \emph{adjacency matrix} is \(A_G=(a_{ij})_{n\times n}\), where \(a_{ij}=1\) if \(v_iv_j\in E(G)\) and \(a_{ij}=0\) otherwise.
The \emph{Laplacian matrix} of \(G\) is defined by
\(L_G=D_G-A_G\), where \(D_G\) is the diagonal matrix whose \((i,i)\)-entry is \(d_G(v_i)\), the degree of \(v_i\) in \(G\).
Throughout the paper, we write \(\mu_1(G)\ge \mu_2(G)\ge \cdots \ge \mu_n(G)\) for the eigenvalues of \(A_G\), and \(0=\lambda_1(G)\le \lambda_2(G)\le \cdots \le \lambda_n(G)\) for the eigenvalues of \(L_G\). A graph \(G\) is \emph{\(d\)-regular} if every vertex of \(G\) has degree \(d\); in this case, we have \(\lambda_i(G)=d-\mu_i(G)\) for each \(i=1,2,\dots,n\).

A central theme in spectral graph theory is the behavior of the extremal adjacency eigenvalues of large regular graphs.
The starting point is the celebrated Alon--Boppana bound (see Alon~\cite{alon1986eigenvalues} and Boppana~\cite{boppana1986eigenvalues}), which asserts that for every fixed integer \(d\ge 3\) and every sequence \(\{G_i\}\) of \(d\)-regular graphs with \(|V(G_i)|\to\infty\),
\[
\liminf_{i\to\infty}\mu_2(G_i)\ge 2\sqrt{d-1}.
\]
Thus \(2\sqrt{d-1}\) is the natural asymptotic barrier for the {\it nontrivial} adjacency spectrum of \(d\)-regular graphs (i.e., eigenvalues other than $\pm d$).
More generally, a theorem of Serre~\cite{serre1997repartition} implies that for every fixed integer \(s\ge 2\),
\[
\liminf_{i\to\infty}\mu_s(G_i)\ge 2\sqrt{d-1}.
\]
In fact, in an asymptotic sense, the parameter $s$ can be taken linear in the number of vertices.
Thus, the Alon--Boppana threshold governs not only the second eigenvalue but, quantitatively, a positive proportion of the top of the adjacency spectrum; see also~\cite{mohar2010strengthening}. Nilli~\cite{nilli1991second,nilli2004tight} later gave particularly elegant proofs and refinements of the Alon--Boppana bound, while Kahale~\cite{kahale1992second} obtained related improvements. 

This line of work naturally leads to the emergence of the conceptually important family of Ramanujan graphs.
A connected \(d\)-regular graph \(G\) is called \emph{Ramanujan} if every adjacency eigenvalue other than \(\pm d\) has absolute value at most \(2\sqrt{d-1}\).
In other words, Ramanujan graphs are precisely those regular graphs whose nontrivial spectrum is as small as the Alon--Boppana bound permits.
The first explicit constructions of infinite families of Ramanujan graphs were given by Lubotzky, Phillips, and Sarnak~\cite{lubotzky1988ramanujan} and independently by Margulis~\cite{margulis1988explicit}; Morgenstern~\cite{morgenstern1994existence} later extended these constructions to all degrees \(d\) such that \(d-1\) is a prime power.
A different manifestation of the same threshold appears in random regular graphs: Alon~\cite{alon1986eigenvalues} conjectured that a random \(d\)-regular graph has second eigenvalue at most \(2\sqrt{d-1}+o_n(1)\) asymptotically almost surely, and this was proved by Friedman~\cite{friedman2003proof}.

A major breakthrough of Marcus, Spielman, and Srivastava~\cite{MarcusSpielmanSrivastava2015} established the existence of bipartite \(d\)-regular Ramanujan graphs for every fixed \(d\ge 3\), later extended to all sizes~\cite{MarcusSpielmanSrivastava2018}. More recently, Huang, McKenzie, and Yau~\cite{huang2024ramanujan} proved that for every fixed \(d\ge 3\) and all sufficiently large \(N\), approximately \(69\%\) of \(d\)-regular \(N\)-vertex graphs are Ramanujan. This resolves a major problem and significantly deepens our understanding of the field.
For further background, we refer readers to the surveys Davidoff--Sarnak--Valette~\cite{davidoff2003elementary} and Hoory--Linial--Wigderson~\cite{hoory2006expander}.

Motivated by this circle of ideas, it is natural to ask whether there is a meaningful extension of the Alon--Boppana bound beyond the adjacency spectrum of regular graphs.
Our main motivation is a conjecture of Spielman~\cite[Section~27]{spielman2019spectral}, which seeks to control the \emph{Laplacian eigenratio}
\[
\frac{\lambda_2(G)}{\lambda_n(G)}
\]
in terms of the average degree of \(G\).\footnote{Strictly speaking, Spielman's original conjecture is formulated for weighted graphs, whereas in this paper we restrict attention to simple graphs, which we believe to be of primary interest. For the purpose of disproving the conjecture, however, this causes no loss of generality: since simple graphs form a subclass of weighted graphs, any counterexample in the simple-graph setting (as we show in Theorem~\ref{thm:counterexamples}) also disproves the original weighted conjecture.} Viewed from this perspective, Spielman's conjecture may be regarded as a Laplacian-eigenratio analogue of the Alon--Boppana bound.

\subsection{Spielman's conjecture and our results}
We begin by introducing Spielman's conjecture. The Alon--Boppana bound can be equivalently stated as follows: any $n$-vertex $d$-regular graph $G$ satisfies $\lambda_2(G) \le d - 2\sqrt{d-1} + o_n(1)$. Spielman observed that the same proof approach cannot be used to obtain the corresponding lower bound $\lambda_n(G) \ge d + 2\sqrt{d-1} + o_n(1)$ for such $G$, although this inequality does hold when $G$ is Ramanujan. In \cite{spielman2019spectral}, he asked whether a graph $G$ can approximate the complete graph any better than Ramanujan graphs do, and formulated the following conjecture.

\begin{conjecture}[Spielman~\cite{spielman2019spectral}]\label{conj:spielman}
For every fixed rational number \(d\ge 1\) and every fixed \(\varepsilon>0\), there exists an integer \(n_0=n_0(d,\varepsilon)\) such that every graph \(G\) of average degree \(d\) on \(n\ge n_0\) vertices satisfies
\[
\frac{\lambda_2(G)}{\lambda_n(G)}
\le
\frac{d-2\sqrt{d-1}}{d+2\sqrt{d-1}}+\varepsilon.
\]
\end{conjecture}

Our results show that the behavior of this conjecture is surprisingly mixed: it fails for infinitely many \(d>2\), holds for \(d \le 2\), but remains valid in the setting of regular graphs. We also obtain higher-multiplicity analogues for regular graphs, as well as an unconditional quantitative result for Ramanujan graphs.

We first construct counterexamples for infinitely many values of the average degree \(d>2\). The construction builds on the work of Marcus--Spielman--Srivastava~\cite{MarcusSpielmanSrivastava2015} on the existence of bipartite Ramanujan graphs.

\begin{theorem}\label{thm:counterexamples}
For each integer \(3\le q\le 9\), there exists an integer \(m_0(q)\) such that the following holds.
For every integer \(m\ge m_0(q)\), let
\[
d_{q,m}:=2+\frac{q-2}{m}.
\]
Then there exists a constant \(\varepsilon_{q,m}>0\) and infinitely many connected graphs \(G\) of average degree \(d_{q,m}\) such that
\[
\frac{\lambda_2(G)}{\lambda_n(G)}
>
\frac{d_{q,m}-2\sqrt{d_{q,m}-1}}{d_{q,m}+2\sqrt{d_{q,m}-1}}
+
\varepsilon_{q,m}.
\]
\end{theorem}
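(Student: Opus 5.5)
The plan is to take a $q$-regular bipartite Ramanujan graph $H$ on $N$ vertices, which exists for every $q\ge 3$ and arbitrarily large $N$ by Marcus--Spielman--Srivastava~\cite{MarcusSpielmanSrivastava2015}, and replace every edge of $H$ by a long path. This gives a graph $G$ whose branch vertices have degree $q$ and all other vertices have degree $2$. If each edge becomes a path of length $L$, then
\[
d=\frac{qL}{1+q(L-1)/2}=2+\frac{2(q-2)}{2+q(L-1)}.
\]
To hit exactly $d_{q,m}=2+(q-2)/m$ I will mix path lengths $L$ and $L+1$ on suitable fractions of the edges. An alternative is to attach a bounded number of extra degree-$2$ vertices, chosen so that the vertex count matches, and treat this as a small perturbation. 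Either way $L\approx 2m/q$, and letting $N\to\infty$ gives infinitely many connected examples. Near $d=2$ the conjectured bound behaves like
\[
\Bigl(\frac{\sqrt{d-1}-1}{\sqrt{d-1}+1}\Bigr)^2\approx\frac{(d-2)^2}{16}=\frac{(q-2)^2}{16m^2},
\]
so I need $\lambda_2(G)/\lambda_n(G)\ge c(q)/m^2$ with $c(q)>(q-2)^2/16$.

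First I reduce the spectrum of $G$ to that of $H$. Work in the uniform-length case first; the mixed case is a perturbation, or can be handled by interlacing and Courant--Fischer. Write $\lambda=2-2\cos\theta$. On the interior of a path from $v$ to $u$, any eigenvector has the form
\[
f(t)=\frac{f(v)\sin((L-t)\theta)+f(u)\sin(t\theta)}{\sin(L\theta)} .
\]
Substituting this into the eigen-equation at a branch vertex gives $A_H f|_{V(H)}=\mu f|_{V(H)}$ with
\[
\mu=\frac{(q-\lambda)\sin(L\theta)-q\sin((L-1)\theta)}{\sin\theta}=:\Phi_L(\theta).
\]
The remaining eigenvalues are the Dirichlet path eigenvalues $\theta=j\pi/L$, which come from cycle-space vectors. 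So every eigenvalue of $G$ is either a Dirichlet value or a root of $\Phi_L(\theta)=\mu_i(H)$.

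Next I bound both ends of the spectrum.

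\emph{Lower bound on $\lambda_2(G)$.} Since $H$ is Ramanujan, every nontrivial $\mu_i(H)$ satisfies $\mu_i(H)\le 2\sqrt{q-1}$. Hence the smallest positive root of $\Phi_L=\mu$ is at least the smallest root $\theta_*$ of $\Phi_L(\theta)=2\sqrt{q-1}$. It must also be compared with the first Dirichlet value $\pi/L$. Expanding $\Phi_L$ for $\theta=\alpha/L$ with $L\to\infty$ gives
\[
\Phi_L\approx \frac{q\alpha\cos\alpha}{\alpha}\cdot(\ldots).
\]
More precisely, to leading order the condition is $q\cos\alpha=2\sqrt{q-1}$, so $\alpha_*=\arccos(2\sqrt{q-1}/q)$, and
\[
\lambda_2(G)\gtrsim \frac{\min(\alpha_*,\pi)^2}{L^2}\approx \frac{q^2\alpha_*^2}{4m^2}.
\]

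\emph{Upper bound on $\lambda_n(G)$.} I need $\lambda_n(G)\le 4+o(1)$, or an explicit bound. Because $H$ is bipartite, $\mu=-q$ occurs, and I must check the roots of $\Phi_L=-q$ near $\theta=\pi$. Near $\pi$ these should give $\lambda_n\le 4+O(1/L^2)$, possibly slightly exceeding $4$. I will control this by the same expansion with $\theta=\pi-\beta/L$.

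The final step, which I expect to be the main obstacle, is to verify that
\[
\frac{q^2\alpha_*^2}{16}>\frac{(q-2)^2}{16}, \qquad\text{i.e.}\qquad q\arccos\bigl(2\sqrt{q-1}/q\bigr)>q-2,
\]
including the correct leading constant for $\lambda_n$. Then the ratio exceeds the conjectured bound by some $\varepsilon_{q,m}>0$ once $m\ge m_0(q)$. This inequality plausibly holds only for small $q$, which would explain the range $3\le q\le 9$; I would check it numerically for each $q$. The delicate part is making the asymptotics uniform in $N$ and controlling the lower-order terms in $1/m$, including the effect of mixed path lengths. I would try to obtain explicit two-sided bounds on $\Phi_L$ rather than bare asymptotics, so that $m_0(q)$ can be chosen explicitly.
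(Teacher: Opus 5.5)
There is a genuine gap, and it sinks the construction. The plan needs $\lambda_n(G)\le 4+o(1)$, and that is false. Your $G$ has branch vertices of degree $q$, so already $\lambda_n(G)\ge q+1$ by Lemma~\ref{lem:lambdan-delta}. More precisely, each branch vertex $v$ carries an almost localized eigenvector. Put $1$ at $v$ and $(-1/(q-1))^t$ at distance $t<L$ along each of the $q$ incident paths. Its Rayleigh quotient is $q^2/(q-1)$, up to an error exponentially small in $L$; this is the exact eigenvalue of the infinite $q$-legged spider. These eigenvalues exceed $4$, so the parametrization $\lambda=2-2\cos\theta$ with real $\theta$ cannot see them. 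They correspond to $\theta=\pi+i\phi$ with $\tanh(\phi/2)=(q-2)/q$, and looking at real roots of $\Phi_L=-q$ near $\theta=\pi$ will not find them. So $\lambda_n(G)\to q^2/(q-1)\ge 9/2$, not $4$.

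With the correct value, subdivision fails for every $q$. Your reduction is right, and in fact exact:
$\Phi_L(\theta)=q\cos(L\theta)+(q-2)\sin(L\theta)\tan(\theta/2)$.
Hence $\lambda_2(G)=(1+o(1))\alpha_*^2/L^2=(1+o(1))\,q^2\alpha_*^2/(4m^2)$. By Alon--Boppana, no choice of $q$-regular $H$ with $N\to\infty$ does better.

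\begin{itemize}
\item The ratio satisfies $m^2\lambda_2(G)/\lambda_n(G)\to(q-1)\alpha_*^2/4$.
\item Since $\sin\alpha_*=(q-2)/q$ and $\tan\alpha_*=(q-2)/(2\sqrt{q-1})$, $m^2$ times the conjectured bound tends to $(q-2)^2/16=(q-1)\tan^2\alpha_*/4$.
\item Because $\alpha_*<\tan\alpha_*$, subdivided Ramanujan graphs fall below Spielman's bound once $m$ is large. For $q=3$ the two constants are $0.0577<0.0625$.
\end{itemize}

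Mixing path lengths $L,L+1$ or adding $O(1)$ vertices does not change these leading constants. Your own target inequality $q\alpha_*>q-2$ is equivalent to $\alpha_*>\sin\alpha_*$, which holds for every $q$. It therefore could never explain the range $q\le 9$, which is a symptom of the error.

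The missing idea is to dilute the degree with pendant vertices rather than subdivisions. In the paper's corona $H\circ\overline{K_{m-1}}$, $L_G$ splits exactly into the blocks $A_m(\lambda_i(H))$ of Lemma~\ref{lem:pendant-block}. This gives
$\lambda_2(G)=\theta_-(\lambda_2(H))\ge\theta_-(q-2\sqrt{q-1})\approx(q-2\sqrt{q-1})/m$,
while $\lambda_n(G)=\theta_+(2q)\approx m+2q$ grows with $m$. The ratio is then about $(q-2\sqrt{q-1})/m^2$, and $q-2\sqrt{q-1}>(q-2)^2/16$ exactly when $q<10$.
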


In particular, Conjecture~\ref{conj:spielman} fails for every such value \(d_{q,m}\). 
In contrast, our second result shows that the conjecture holds throughout the entire interval \(1\le d\le 2\). Moreover, in the borderline case \(d=2\), we obtain a quantitative estimate that is sharp up to a constant factor (see Remark~\ref{rem:unicyclic}).

\begin{theorem}\label{thm:d-at-most-2}
Conjecture~\ref{conj:spielman} holds for every fixed rational number \(1\le d\le 2\).
In particular, if \(G\) is a graph on \(n\ge 6\) vertices of average degree \(2\), then
\[
\frac{\lambda_2(G)}{\lambda_n(G)}\le \frac{9}{4n}.
\]
\end{theorem}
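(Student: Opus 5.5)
The plan is to treat $d<2$ and $d=2$ separately. In both cases we bound $\lambda_2(G)$ from above through a test vector in the Courant--Fischer characterization
\[
\lambda_2(G)=\min_{x\perp \mathbf 1,\,x\neq 0}\frac{\sum_{uv\in E}(x_u-x_v)^2}{\sum_u x_u^2},
\]
and $\lambda_n(G)$ from below by the standard bound $\lambda_n(G)\ge \Delta(G)+1$, valid whenever $G$ has an edge.

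\textbf{The case $d<2$.} The right-hand side $\frac{d-2\sqrt{d-1}}{d+2\sqrt{d-1}}$ is positive for $1<d<2$ and equals $1$ at $d=1$. A graph with average degree $d<2$ has $|E|=dn/2<n$ edges, so it is disconnected, since a connected graph needs at least $n-1$ edges and $dn/2<n-1$ for large $n$ once $d<2$. Hence $\lambda_2(G)=0$ and the ratio is $0$, so the conjecture holds trivially. The only slightly delicate point is $d$ close to $2$ with $dn/2\ge n-1$, which requires $n\le 2/(2-d)$, a bounded range absorbed into $n_0$.

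\textbf{The case $d=2$.} Here $|E|=n$. If $G$ is disconnected the ratio is $0$. Otherwise $G$ is connected unicyclic, and the target becomes $\lambda_2/\lambda_n\le 9/(4n)$. This is much stronger than the conjecture, whose right-hand side is $0$ at $d=2$ plus $\varepsilon$.

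\emph{Lower bound on $\lambda_n$.} If $G$ is the cycle $C_n$, then $\lambda_2=2-2\cos(2\pi/n)\le 4\pi^2/n^2$ and $\lambda_n\ge 2$, so the bound is immediate for $n\ge 6$. Otherwise $\Delta\ge 3$, so $\lambda_n\ge\Delta+1\ge 4$. A sharper option is $\lambda_n\ge \max_{uv\in E}\bigl(d(u)+d(v)\bigr)$ for connected graphs, or the bound $\lambda_n\ge\Delta+1$ combined with $\Delta$ itself.

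\emph{Upper bound on $\lambda_2$.} We need $\lambda_2\le 9\lambda_n/(4n)$, that is, roughly $\lambda_2\lesssim 9(\Delta+1)/(4n)$. Removing one cycle edge turns $G$ into a spanning tree $T$, so $\lambda_2(G)\le\lambda_2(T)+2$; this is too weak, and the argument must work directly with test vectors. We take a diameter-type path or a centroid decomposition. Choose a vertex $c$ (a centroid for the spanning tree) and partition $V\setminus\{c\}$, or the branches at $c$, into two groups $A$ and $B$ of sizes at least about $n/3$ each. Define $x=\alpha$ on $A$, $x=-\beta$ on $B$, and $0$ at $c$, adjusted so that $x\perp\mathbf 1$. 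Only edges between the groups, or edges through $c$, contribute to the numerator. There are at most $O(1)$ such edges (those incident to $c$, plus possibly one cycle edge), while the denominator is $\Theta(n)$. This gives $\lambda_2=O(\Delta/n)$. More carefully, the edges at $c$ number $d(c)\le\Delta$, each contributing at most $\max(\alpha,\beta)^2$, and the cycle adds at most one or two more crossing edges. This yields $\lambda_2\le(\Delta+2)\cdot c'/n$.

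\emph{Main obstacle.} The difficulty is getting the constant $9/4$ exactly. This needs balancing the group sizes; the centroid gives branches of size at most $n/2$, allowing a split into two sides each of size at least $(n-1)/3$. It also needs the cycle to be placed so that at most one extra crossing edge appears, for instance by choosing the split so that the cycle lies within one side plus $c$. We then optimize $\alpha,\beta$ against the bound $\lambda_n\ge\Delta+1$. For the bounded case $n\ge 6$ with small $\Delta$, we would handle the exceptional configurations (cycle through the centroid, $\Delta=3$) by a direct check.
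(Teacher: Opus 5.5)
Your treatment of \(d<2\), the reduction of \(d=2\) to connected unicyclic graphs, and your centroid test vector are sound as far as they go. With both sides of size at least \((n-1)/3\), at most one crossing cycle edge (the removed one), and \(\lambda_n\ge\Delta+1\ge\deg(c)+1\), you do get \(\lambda_2/\lambda_n=O(1/n)\). That is enough for the conjecture at \(d=2\).

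The gap is the explicit bound \(9/(4n)\). This is the second assertion of the theorem, and your proposal explicitly leaves it open (``main obstacle'', ``direct check''). Once \(x_c=0\), orthogonality forces \(\alpha:\beta=|B|:|A|\), so there is nothing left to optimize. Plain accounting then gives \(\lambda_2\le\frac{4\deg(c)+9}{2(n-1)}\). When \(\deg(c)=3\) this is a ratio of about \(\frac{21}{8n}\), which is above \(\frac{9}{4n}\).

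Reaching \(9/4\) needs a genuine case analysis. You must distinguish whether the removed cycle edge crosses the split. When the larger side is a single branch, you must use the centroid bound (at most \(n/2\) vertices) to force near-balance. Such an analysis appears to work for \(n\ge 7\), but it is not in your proposal.

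At \(n=6\) your balancing rule actually fails. Let \(c\) be adjacent to \(p,q,y_1\), let \(y_1y_2y_3\) be a path, and let \(e=py_1\). Then \(c\) is a centroid of \(G-e\), the only admissible split is \(\{p,q\}\) versus \(\{y_1,y_2,y_3\}\), and \(e\) crosses it. Your vector gives \(\lambda_2\le 47/30\). With \(\lambda_n\ge\Delta+1=4\) this yields only \(\lambda_2/\lambda_n\le 47/120>3/8\).

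The paper avoids paying for \(\deg(c)\) by using cuts with only one or two edges in Lemma~\ref{lem:cut}.

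\emph{Large hanging tree.} Suppose some tree hanging off the cycle has at least \(n/2\) vertices. The paper descends in that tree to a vertex \(u\) all of whose child subtrees have at most \(n/2\) vertices. It then cuts off the largest child subtree along a single edge. That subtree has at least roughly \(n/(2\deg(u))\) vertices, which is offset by \(\lambda_n\ge\deg(u)+1\), giving ratio \(\le 2/n\).

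\emph{All hanging trees small.} Otherwise it takes a cyclic interval of hanging trees of total size in \([n/3,n/2]\). This set has exactly two boundary edges, so \(\lambda_2\le 9/n\) regardless of degrees, and \(\lambda_n\ge 4\) finishes.

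Two smaller errors:
\begin{itemize}
\item In the cycle case, \(\lambda_n\ge 2\) does not suffice for \(n\in\{6,7,8\}\); for \(C_6\), \(\lambda_2=1\) and \(1/2>3/8\). Use \(\lambda_n\ge\Delta+1=3\), as the paper does.
\item The inequality \(\lambda_n\ge\max_{uv\in E}(d(u)+d(v))\) is false. It is the Anderson--Morley \emph{upper} bound, and \(P_4\) has \(\lambda_4=2+\sqrt2<4\).
\end{itemize}
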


Although the conjecture fails in general for \(d>2\), it remains valid for all \(d\)-regular graphs with \(d\ge 2\). The next two theorems may be viewed as Laplacian eigenratio analogues and extensions of Nilli's approach \cite{nilli1991second,nilli2004tight} to the Alon--Boppana bound. The proof of the first theorem highlights the main idea that we adopt: we compare $\lambda_2$ and $\lambda_n$ by constructing two carefully chosen test vectors, which in a sense form a pair of counterparts, each supported near one of two distant edges.

\begin{theorem}\label{thm:regular-ratio}
Let \(d\ge 2\) and \(k\ge 1\) be integers, and let \(G\) be an \(n\)-vertex \(d\)-regular graph containing two edges at distance at least \(2k+2\).
Then
\[
\frac{\lambda_2(G)}{\lambda_n(G)}
<
\frac{d-2\sqrt{d-1}}{d+2\sqrt{d-1}}
+
\frac{4}{(k+1)\sqrt{d-1}}.
\]
\end{theorem}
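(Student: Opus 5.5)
The plan is a Nilli-type argument built on two test vectors, $f$ for $\lambda_2$ and $g$ for $\lambda_n$, that are \emph{sign-twisted counterparts} of each other. The point is that $f^{T}L_Gf+g^{T}L_Gg=2d\|f\|^2$ exactly, which turns an upper bound on $\lambda_2$ into a matching lower bound on $\lambda_n$.

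\textbf{Test vectors.} Let $e_1=u_1v_1$ and $e_2=u_2v_2$ be the two edges at distance at least $2k+2$. For $j=1,2$ and $0\le i\le k$, let $V^{(j)}_i$ be the set of vertices at distance exactly $i$ from $\{u_j,v_j\}$. Define $f_j(x)=(d-1)^{-i/2}$ for $x\in V^{(j)}_i$ with $i\le k$, and $f_j=0$ elsewhere. Set $g_j(x)=(-1)^i f_j(x)$. The distance hypothesis ensures the supports of $f_1$ and $f_2$ are disjoint with no edges between them.

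\textbf{Key identity.} Put $f=f_1-cf_2$ and $g=g_1-cg_2$, where $c>0$ is chosen so that $f\perp\mathbf 1$; the same $c$ is used in $g$. I will check $f^TL_Gf+g^TL_Gg=2d\|f\|^2$ edge by edge. The levels differ by at most one along every edge. An edge between consecutive levels, or from level $k$ to outside the support, satisfies $(f_a-f_b)^2+(g_a-g_b)^2=(f_a-f_b)^2+(f_a+f_b)^2=2(f_a^2+f_b^2)$. An edge inside one level has $f_a=f_b$ and $g_a=g_b=\pm f_a$, so it contributes $0+4f_a^2=2(f_a^2+f_b^2)$. Summing $2(f_a^2+f_b^2)$ over all edges gives $2\sum_x d(x)f(x)^2=2d\|f\|^2$ by regularity. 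Hence, with $t:=f^TL_Gf/\|f\|^2$, the Courant--Fischer characterization gives
\[
\lambda_2(G)\le t,\qquad \lambda_n(G)\ge \frac{g^TL_Gg}{\|g\|^2}=2d-t,
\]
using $\|g\|=\|f\|$. The map $t\mapsto t/(2d-t)$ is increasing on $[0,2d)$, so it suffices to bound $t$ from above.

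\textbf{Bounding $t$.} Since the supports do not interact, $t$ is a weighted average of $t_1$ and $t_2$, where $t_j=f_j^TL_Gf_j/\|f_j\|^2$; so it suffices to show $t_j\le d-2\sqrt{d-1}+\delta_k$ for each $j$. This is Nilli's edge-centered computation.
\begin{itemize}
\item Write $n_i=|V^{(j)}_i|$. Then $n_0=2$ and, by $d$-regularity, each vertex of $V^{(j)}_i$ with $i\ge1$ has at least one neighbour in level $i-1$, so $n_{i+1}\le (d-1)n_i$.
\item Only edges between consecutive levels, or from level $k$ to outside, contribute to $f_j^TL_Gf_j$.
\item Their number from level $i$ to level $i+1$ is at most $(d-1)n_i$ (each of the two edge endpoints has at most $d-1$ neighbours other than its partner), and each contributes $(d-1)^{-i}\bigl(1-(d-1)^{-1/2}\bigr)^2$.
\item Regrouping gives $f_j^TL_Gf_j\le\bigl(d-2\sqrt{d-1}\bigr)\|f_j\|^2+2\sqrt{d-1}\,\sum_{i\ge1}n_i(d-1)^{-i}\cdot(\text{boundary term})$.
\item Since $n_i(d-1)^{-i}$ is non-increasing in $i$, the weights $n_i(d-1)^{-i}$ are spread over $k+1$ levels, so the boundary terms are at most a $1/(k+1)$ fraction of $\|f_j\|^2$.
\end{itemize}
This yields $t_j\le d-2\sqrt{d-1}+\frac{C\sqrt{d-1}}{k+1}$ for a small constant $C$ (I expect $C\le 2$).

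\textbf{Conclusion and main obstacle.} Finally, write $t=d-2\sqrt{d-1}+\delta$ and expand
\[
\frac{t}{2d-t}=\frac{d-2\sqrt{d-1}+\delta}{d+2\sqrt{d-1}-\delta}.
\]
The derivative of $t/(2d-t)$ is $2d/(2d-t)^2$, which is at most $2d/(d+2\sqrt{d-1}-\delta)^2$. Combined with $\delta\lesssim 2\sqrt{d-1}/(k+1)$, this should give an excess of at most $\frac{4}{(k+1)\sqrt{d-1}}$, and strictly less. The step I expect to be the main obstacle is the careful boundary bookkeeping in bounding $t_j$: tracking the exact constant so that the final error is at most $4/((k+1)\sqrt{d-1})$, including the case $d=2$ and the case where $\delta$ is not small relative to $\sqrt{d-1}$. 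If the constant does not come out directly, I would truncate the profile more gently, for instance taking weights $(d-1)^{-i/2}(1-i/(k+1))$, to gain the required factor.
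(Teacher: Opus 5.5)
Your test vectors are the right ones; they coincide with the paper's. The gap is that the key identity $f^{T}L_Gf+g^{T}L_Gg=2d\|f\|^2$ is false, and so is the step $\lambda_n(G)\ge 2d-t$ built on it.

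The error is in the same-level case. The twist $g=(-1)^i f$ is constant in sign on each level, so two adjacent vertices in the same level have $g_a=g_b$. Such an edge contributes $0$ to \emph{both} forms, not $0+4f_a^2$. The correct relation is
\[
f^{T}L_Gf+g^{T}L_Gg=2d\|f\|^2-4\sum_{ab\in E(G),\ a,b\text{ in the same level}} f_a^2 .
\]
Hence $g^{T}L_Gg/\|g\|^2=2d-t-w$ with $w>0$ always, since each $u_jv_j$ is a level-$0$ edge. Moreover $w$ need not be $O(1/k)$; it can be of constant order when the balls around $e_1,e_2$ are far from tree-like.

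In fact $\lambda_n\ge 2d-t$ is false in general. Combined with your (correct) Nilli bound $t\le d-2\sqrt{d-1}+\frac{2\sqrt{d-1}}{k+1}$, it would give $\lambda_n\ge d+2\sqrt{d-1}-\frac{2\sqrt{d-1}}{k+1}$ for every $d$-regular graph with two far-apart edges. Now take $G=C_N\,\square\,K_{d-1}$ with $N$ even and large and $d\ge 4$. It is connected, $d$-regular, has far-apart edges, and $\lambda_n(G)=4+(d-1)=d+3$, which is below that bound once $k$ is large. This is exactly Spielman's observation recalled in the introduction: the Alon--Boppana argument yields no lower bound on $\lambda_n$ alone.

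So it does not suffice to bound $t$ from above and invoke monotonicity of $t/(2d-t)$. Within your framework you would need the stronger bound $t\le (d-2\sqrt{d-1})\bigl(1-\frac{w}{2d}\bigr)+\frac{2(d-1)^{3/2}}{d(k+1)}$, which ties $t$ to the same-level mass $w$.

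The missing idea is to compare the two quadratic forms directly rather than through their sum. Both are sums over cross-level edges only. An edge between levels $i$ and $i+1$ with $i<k$ has weight $(d-1)^{-(i+1)}(\sqrt{d-1}-1)^2$ in $f^{T}L_Gf$ and $(d-1)^{-(i+1)}(\sqrt{d-1}+1)^2$ in $g^{T}L_Gg$, with an extra factor $c^2$ on the support of $f_2$. An edge leaving level $k$ has weight $(d-1)^{-k}$ in both. Therefore every interior term cancels, however many same-level edges there are:
\[
(\sqrt{d-1}+1)^2 f^{T}L_Gf-(\sqrt{d-1}-1)^2 g^{T}L_Gg=\frac{4\sqrt{d-1}}{(d-1)^{k}}\Bigl(e\bigl(V^{(1)}_k,V^{(1)}_{k+1}\bigr)+c^2\,e\bigl(V^{(2)}_k,V^{(2)}_{k+1}\bigr)\Bigr).
\]
Use $e(V^{(j)}_k,V^{(j)}_{k+1})\le (d-1)|V^{(j)}_k|$ together with your observation that $|V^{(j)}_i|(d-1)^{-i}$ is non-increasing. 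This bounds the right side by $\frac{4(d-1)^{3/2}}{k+1}\|f\|^2$, and hence
\[
(\sqrt{d-1}+1)^2\lambda_2\le(\sqrt{d-1}-1)^2\lambda_n+\frac{4(d-1)^{3/2}}{k+1}.
\]
Discard the trivial disconnected case, divide by $(\sqrt{d-1}+1)^2\lambda_n$, and use $\lambda_n\ge d$; this gives the theorem. This is the paper's argument in Lemma~\ref{lem:regular-compare}.
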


Since every $n$-vertex \(d\)-regular graph with \(d\ge 3\) contains two edges at distance \(\Omega(\log_{d-1} n)\), Theorem~\ref{thm:regular-ratio} immediately implies Conjecture~\ref{conj:spielman} for all \(d\)-regular graphs with \(d\ge 3\); the case \(d=2\) follows from Theorem~\ref{thm:d-at-most-2}.

Moreover, in the setting of regular graphs, the proof idea of Theorem~\ref{thm:regular-ratio} yields stronger conclusions than Conjecture~\ref{conj:spielman}: we obtain inequalities comparing higher Laplacian eigenvalues, as well as a corresponding statement for the adjacency spectrum.

\begin{theorem}\label{thm:higher-eigs}
Let \(s,k\) be positive integers.
Then the following statements hold.
\begin{itemize}
    \item[(a)] Let \(d\ge 3\), and let \(G\) be an \(n\)-vertex \(d\)-regular graph containing \(4s-2\) edges that are pairwise at distance at least \(2k+2\).
    Then
    \[
    (d+2\sqrt{d-1})\lambda_{s+1}(G)
    \le
    (d-2\sqrt{d-1})\lambda_{n-s+1}(G)
    +
    \frac{4(d-1)^{3/2}}{k+1}.
    \]

    \item[(b)] Let \(d\ge 5\), and let \(G\) be an \(n\)-vertex \(d\)-regular graph containing \(2s+1\) vertices whose pairwise distances are all at least \(4k\).
    Then
    \[
    (\sqrt{d-1}+2)\mu_{s+1}(G)
    -
    (\sqrt{d-1}-2)\mu_{n-s+1}(G)
    \ge
    4(d-1)\cos\left(\frac{\pi}{2k}\right).
    \]
\end{itemize}
\end{theorem}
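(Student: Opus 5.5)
The plan is to prove both parts by the Courant--Fischer minimax principle, using disjointly supported test vectors, in the style of Nilli's proof of the Alon--Boppana bound. The pairing idea from Theorem~\ref{thm:regular-ratio} supplies the key ingredient. To each chosen edge $e=uv$ I attach two counterpart vectors $f_e$ and $f'_e$, both supported on the ball of radius $k$ around $e$. Write $i(x)$ for the distance of $x$ from $e$ and let $\pm$ indicate the side ($u$ or $v$) on which $x$ lies. I set
\[
f_e(x)=(d-1)^{-i(x)/2}\quad\text{(taper to zero near radius $k$)},\qquad f'_e(x)=\pm(-1)^{i(x)}f_e(x).
\]
Since $|f_e|=|f'_e|$ pointwise, the two quadratic forms share the diagonal part $d\|f_e\|^2$. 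Edges between consecutive levels enter $Q(f_e)=f_e^{T}L_Gf_e$ and $Q(f'_e)$ with opposite signs. Edges inside a level, and edges created by cycles, are what spoil the individual bounds. In the weighted combination
\[
(d+2\sqrt{d-1})\,Q(f_e)-(d-2\sqrt{d-1})\,Q(f'_e),
\]
however, these spoiling terms enter with a favorable sign. The first step is to redo this computation from the proof of Theorem~\ref{thm:regular-ratio} and extract a per-edge inequality
\[
(d+2\sqrt{d-1})\,R(f_e)\le(d-2\sqrt{d-1})\,R(f'_e)+\frac{4(d-1)^{3/2}}{k+1},
\]
where $R$ denotes the Rayleigh quotient. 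The error term comes only from the boundary layer of the ball, and the choice of taper is what produces the constant $4(d-1)^{3/2}/(k+1)$.

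The second step is the selection of edges. Because the edges are at pairwise distance at least $2k+2$, all the $f_e,f'_e$ have pairwise disjoint supports and no edges between supports. Hence any linear combination of the $f_e$ (respectively of the $f'_e$) has a Rayleigh quotient that is a convex combination of the individual ones. Courant--Fischer then gives
\[
\lambda_{s+1}\le\max_{e\in I}R(f_e)\ \text{ for any $|I|=s+1$},\qquad \lambda_{n-s+1}\ge\min_{e\in J}R(f'_e)\ \text{ for any $|J|=s$}.
\]
The difficulty is that the per-edge inequality only couples $a_e=R(f_e)$ with $b_e=R(f'_e)$ for the same edge. I must therefore find $I$ and $J$ with
\[
(d+2\sqrt{d-1})\max_{I}a\le(d-2\sqrt{d-1})\min_{J}b+\text{error}.
\]
The count $4s-2$ should come from a median argument. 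Order the edges by $a_e$ and take $I$ to be the $s+1$ smallest. Let $a^*$ be the largest value in $I$. Among the remaining edges, those with $a_e\ge a^*$ satisfy $b_e\ge\frac{d+2\sqrt{d-1}}{d-2\sqrt{d-1}}a^*-\text{error}$, so they can serve as $J$. If too many edges tie near $a^*$, I would swap roles and exploit the possibility of building $f'_e$-type vectors from both orientations of the sign pattern. This yields more candidates for $J$ and explains the factor roughly $4$ in $4s-2$. I expect this combinatorial selection, making $4s-2$ exactly suffice, to be the main obstacle. My first attempt would be to order the pairs $(a_e,b_e)$ and apply pigeonhole on which of the two inequalities fails.

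For (b), I would repeat the scheme with vertex-centred test vectors on balls of radius about $2k$ around the $2s+1$ vertices; these balls are disjoint since the vertices are at pairwise distance at least $4k$. I take $g_v(x)=(d-1)^{-i/2}\phi(i)$ with $i=\mathrm{dist}(x,v)$, where $\phi$ is the principal eigenvector of a path of length about $k$. This is the source of the constant $\cos(\pi/2k)$, as in Nilli's second proof. The counterpart is $g'_v=(-1)^{i}g_v$. Expanding the adjacency quadratic forms and combining them with weights $\sqrt{d-1}+2$ and $\sqrt{d-1}-2$ makes the cycle/intralevel terms cancel favorably; the assumption $d\ge5$ makes $\sqrt{d-1}-2>0$, so the combination is a positive one. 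This gives the per-vertex inequality with right-hand side $4(d-1)\cos(\pi/2k)$. The same selection argument with $2s+1$ vertices, which needs only about twice as many since no orientation choice arises, then gives the $\mu_{s+1}$/$\mu_{n-s+1}$ comparison via Courant--Fischer.
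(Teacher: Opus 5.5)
Your Courant--Fischer architecture is sound, and for (a) it is leaner than the paper's. As written, though, the two inequalities that carry the content are not established: the per-edge inequality fails for the vectors you actually define, and the per-vertex inequality in (b) lacks its key step.

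\textbf{The selection step is not an obstacle.} Using $s+1$ single-edge vectors with the unrestricted min--max $\lambda_{s+1}=\min_{\dim S=s+1}\max_S R$ avoids any orthogonality to $\vec{1}$. Your median argument then already closes the selection. Every edge outside $I$ has $a_e\ge a^*$ by construction, so ties are irrelevant. If you also put the maximizer $w$ of $I$ into $J$ (needed when $s=1$), you use only $(s+1)+(s-1)=2s\le 4s-2$ edges, and no ``orientation'' trick is needed. The $4s-2$ in the paper has a different origin. The authors pair edges $e_t,f_t$ and tune $\beta_t$ so that each $\vec{x}_t\perp\vec{1}$, work in $s$-dimensional subspaces of $\vec{1}^{\perp}$, and need $2s-1>2(s-1)$ pairs to pick $w\notin L\cup M$.

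\textbf{The gap in (a): the side sign in $f'_e$.} Suppose some vertex of the ball is equidistant from $u$ and $v$ (e.g.\ $e$ lies in a triangle). Take an edge from a $u$-side vertex at level $i$ to such a vertex at level $i+1$ placed on the $v$-side. It contributes the same amount to $Q(f_e)$ and $Q(f'_e)$. It therefore adds a positive $4\sqrt{d-1}\,(f_e(x)-f_e(y))^2$ to the combination, and this is not a boundary term.

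For instance, let $u,v$ have the same $d-1$ further neighbours and use the untapered profile. Then $D_+^2Q(f_e)-D_-^2Q(f'_e)$ exceeds the paper's value by exactly $4(\sqrt{d-1}-1)^3$. If the ball is tree-like beyond level $1$, the slack left by the paper's vector is only $O(1/k)$, so your per-edge inequality fails for large $k$.

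The fix is the paper's construction: levels $U_i$ measured from the set $\{u,v\}$, $\vec{y}=(-1)^i(d-1)^{-i/2}$ on $U_i$ with no side sign, and a sharp cutoff instead of a taper. Then:
\begin{itemize}
\item intra-level edges contribute $0$ to both forms;
\item with $D_\pm=\sqrt{d-1}\pm1$, a $U_i$--$U_{i+1}$ edge ($i<k$) contributes $(d-1)^{-(i+1)}D_-^2$ to $Q(\vec{x})$ and $(d-1)^{-(i+1)}D_+^2$ to $Q(\vec{y})$, which cancel exactly;
\item only the $U_k$--$U_{k+1}$ edges survive.
\end{itemize}
Since $|U_{i+1}|\le(d-1)|U_i|$, the sequence $|U_i|/(d-1)^i$ is non-increasing, and averaging gives exactly $\frac{4(d-1)^{3/2}}{k+1}$ (Claim~\ref{cl:U}). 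A taper would destroy the interior cancellation rather than produce this constant.

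\textbf{The gap in (b): the non-tree terms do not cancel.} Put $\alpha=\pi/2k$, $x_i=\sin(i\alpha)(d-1)^{-i/2}$ and $M_\pm=\sqrt{d-1}\pm2$. In $M_+\vec{x}^{\top}A_G\vec{x}-M_-\vec{y}^{\top}A_G\vec{y}$ the level-$i$ summand is $(2\sqrt{d-1}\,a_ix_{i-1}+4b_ix_i+2\sqrt{d-1}\,c_ix_{i+1})x_i$. Here $a_i,b_i,c_i$ count ordered edges from level $i$ to levels $i-1,i,i+1$.

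The missing step is the ordering $2\sqrt{d-1}\,x_{i-1}\ge 4x_i\ge 2\sqrt{d-1}\,x_{i+1}$. It follows from $x_{i+1}/x_i\le 2\cos\alpha/\sqrt{d-1}$ together with $\sqrt{d-1}\ge 2$. Under $a_i\ge n_i$ and $a_i+b_i+c_i=dn_i$, this makes the tree profile $(n_i,0,(d-1)n_i)$ the minimizer. The recurrence $x_{i-1}+(d-1)x_{i+1}=2\sqrt{d-1}\cos\alpha\,x_i$ then yields $4(d-1)\cos\alpha\,\|\vec{x}_\ell\|^2$.

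This is where $d\ge5$ is really used. Positivity of $\sqrt{d-1}-2$ is not the point: it vanishes at $d=5$, and the final min--max step only needs $M_-\ge0$. Also, $\cos(\pi/2k)$ requires the profile on levels $1,\dots,2k-1$, i.e.\ a path on $2k-1$ vertices and balls of radius $2k-1$ (allowed by the separation $4k$), not a path of length about $k$.
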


Using Theorem~\ref{thm:higher-eigs}, we also derive a new consequence for Ramanujan graphs. Serre's theorem~\cite{serre1997repartition} and its strengthenings (such as Friedman~\cite{friedman1993some}, Nilli~\cite{nilli2004tight} and Mohar~\cite{mohar2010strengthening}) imply that for every integer $d\geq 3$ and every $\varepsilon>0$, there exists a constant $\alpha=\alpha(d,\varepsilon)>0$ such that every \(n\)-vertex \(d\)-regular Ramanujan graph with \(n\) sufficiently large has at least \(\alpha n\) adjacency eigenvalues larger than \(2\sqrt{d-1}-\varepsilon\).
By contrast, the behavior near the negative extremal adjacency spectrum is generally more delicate, and the existing analogous results on negative spectrum of Ramanujan graphs often require additional assumptions, such as large odd girth; see, for instance, Li~\cite{li2001negative}, Cioab\u{a}~\cite{cioabua2006extreme}, and Mohar~\cite{mohar2010strengthening}.  
Our next theorem shows that Ramanujan graphs nevertheless have linearly many eigenvalues below $-2\sqrt{d-1} + \varepsilon$, without any assumption on odd girth or bipartiteness. 
This provides an unconditional result of this kind for general Ramanujan graphs.

\begin{theorem}\label{thm:ramanujan}
For every integer \(d\ge 3\) and every \(\varepsilon>0\), there exist \(n_0=n_0(d,\varepsilon)\) and \(\beta=\beta(d,\varepsilon)>0\) such that every \(n\)-vertex \(d\)-regular Ramanujan graph with \(n\ge n_0\) vertices has at least \(\beta n\) adjacency eigenvalues smaller than \(-2\sqrt{d-1}+\varepsilon\).
\end{theorem}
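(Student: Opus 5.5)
We derive Theorem~\ref{thm:ramanujan} from Theorem~\ref{thm:higher-eigs}(b), together with the Ramanujan condition, which controls $\mu_{s+1}(G)$ from above.

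Fix $d\ge 5$ first; the cases $d=3,4$ need separate treatment, discussed at the end. Choose $k$ large enough that $4(d-1)\cos(\pi/2k)\ge 4(d-1)-\delta$, where $\delta>0$ is small and will be tied to $\varepsilon$. Since $G$ is $d$-regular, a ball of radius $4k$ contains at most $d^{4k}$ vertices. A greedy selection therefore produces at least $n/d^{4k}$ vertices with pairwise distances at least $4k$. We take $s=\lfloor (n/d^{4k}-1)/2\rfloor=\Theta(n)$, so the hypothesis of Theorem~\ref{thm:higher-eigs}(b) holds with $2s+1$ such vertices.

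Now apply the Ramanujan property. For $n$ large we have $s+1\ge 2$, so $\mu_{s+1}\le 2\sqrt{d-1}$. Theorem~\ref{thm:higher-eigs}(b) then gives
\[
(\sqrt{d-1}-2)\,\mu_{n-s+1}\le (\sqrt{d-1}+2)\,2\sqrt{d-1}-4(d-1)+\delta=2\sqrt{d-1}\,(2-\sqrt{d-1})+\delta .
\]
Dividing by $\sqrt{d-1}-2>0$, which is valid for $d\ge 6$, yields $\mu_{n-s+1}\le -2\sqrt{d-1}+\delta/(\sqrt{d-1}-2)$. Choosing $\delta=\varepsilon(\sqrt{d-1}-2)/2$ makes $\mu_{n-s+1}<-2\sqrt{d-1}+\varepsilon$. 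Then $\mu_{n-s+1},\dots,\mu_n$ are $s$ eigenvalues below $-2\sqrt{d-1}+\varepsilon$, so $\beta\approx 1/(2d^{4k})$ works. For $d=5$ the coefficient $\sqrt{d-1}-2$ vanishes, so this route gives nothing.

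The main obstacle is therefore the range $d\in\{3,4,5\}$, where part (b) is either unavailable or degenerate. There I would use part (a) instead. Part (a) requires $4s-2$ edges at pairwise distance at least $2k+2$, and these again exist with $s=\Theta(n)$ by greedy selection in bounded-degree graphs. Writing $\lambda_i=d-\mu_i$, part (a) bounds $\lambda_{n-s+1}=d-\mu_{s}$ in terms of $\lambda_{s+1}=d-\mu_{n-s}$ in the reversed order. The Ramanujan bound $\mu_{n-s}\le 2\sqrt{d-1}$ gives $\lambda_{s+1}\ge d-2\sqrt{d-1}$, which combined with (a) only bounds $\lambda_{n-s+1}$ from below. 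That is the wrong direction for pushing $\mu$ down.

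So instead I would rework the proof of Theorem~\ref{thm:higher-eigs} directly with adjacency test vectors. Around each of many far-apart vertices, take the alternating-sign radial vector $f(v)=(-1)^{r}(d-1)^{-r/2}\phi(r)$, where $r$ is the distance to the center and $\phi$ is a slowly decaying profile. Its Rayleigh quotient is at most $-2\sqrt{d-1}+O(1/k)$ on tree-like neighbourhoods. The difficulty is that odd cycles break the sign alternation. This is presumably why part (b) pairs the adjacency extremes: the positive and negative test vectors compensate each other, and the Ramanujan upper bound is then needed to extract the negative tail. For small $d$ I would try forming linear combinations of the $s$ localized vectors orthogonal to the top $s$ eigenvectors and bounding quotients via Courant--Fischer, using $\mu_2\le 2\sqrt{d-1}$ to absorb the positive-direction error. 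I expect this case to be where most of the real work lies.
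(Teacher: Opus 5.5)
Your argument for $d\ge 6$ is correct and matches the paper's second route via Theorem~\ref{thm:higher-eigs}(b). That route is the one giving the sharper constant $\beta\ge c_d^{-1/\sqrt{\varepsilon}}$. The genuine gap is the range $d\in\{3,4,5\}$. There you set part (a) aside because of an indexing error, and what you offer in its place is an unexecuted sketch. You acknowledge this yourself, and odd cycles do defeat the naive alternating-sign test vector. So as written, the theorem is not proved for $d=3,4,5$.

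With the paper's convention ($\mu_1\ge\cdots\ge\mu_n$ and $\lambda_1\le\cdots\le\lambda_n$), one has $\lambda_i=d-\mu_i$ with the \emph{same} index. So $\lambda_{s+1}=d-\mu_{s+1}$ and $\lambda_{n-s+1}=d-\mu_{n-s+1}$, not $d-\mu_{n-s}$ and $d-\mu_s$. More importantly, the direction was never wrong: a lower bound on $\lambda_{n-s+1}$ is exactly an upper bound on $\mu_{n-s+1}=d-\lambda_{n-s+1}$.

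Concretely, for $s\ge 1$ the Ramanujan property gives $\mu_{s+1}\le\mu_2\le 2\sqrt{d-1}$, hence $\lambda_{s+1}\ge d-2\sqrt{d-1}$. Part (a) then yields
\[
(d-2\sqrt{d-1})\,\lambda_{n-s+1}\ \ge\ (d+2\sqrt{d-1})(d-2\sqrt{d-1})-\frac{4(d-1)^{3/2}}{k+1}.
\]
Since $d-2\sqrt{d-1}=(\sqrt{d-1}-1)^2>0$ for every $d\ge 3$, you may divide by it. This is precisely where (a) has no degeneracy at $d=5$, unlike the factor $\sqrt{d-1}-2$ in (b). Dividing gives
\[
\mu_{n-s+1}=d-\lambda_{n-s+1}\ \le\ -2\sqrt{d-1}+\frac{4(d-1)^{3/2}}{(k+1)(d-2\sqrt{d-1})},
\]
which is below $-2\sqrt{d-1}+\varepsilon$ once $k$ is a suitable multiple (depending on $d$) of $1/\varepsilon$.

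Your own greedy counting supplies the remaining hypothesis. A maximal family of edges at pairwise distance at least $2k+2$ has size $\Omega_{d,k}(n)$, so one can take $4s-2$ of them with $s=\Theta_{d,\varepsilon}(n)$. This proves the theorem for all $d\ge 3$ at once, and it is the paper's primary proof. Part (b) is used there only to improve $\beta$ from $c_d^{-1/\varepsilon}$ to $c_d^{-1/\sqrt{\varepsilon}}$ when $d\ge 6$.
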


We present two proofs of this theorem: the first handles all \(d\ge 3\) using Theorem~\ref{thm:higher-eigs}\textup{(a)}, while the second, for \(d\ge 6\), refines the argument via Theorem~\ref{thm:higher-eigs}\textup{(b)} to yield a sharper quantitative estimate.

\subsection{Other problems related to the Laplacian eigenratio}

Besides Spielman's conjecture, we consider two further problems concerning the Laplacian eigenratio.

The first is a conjecture of You and Liu on trees. In~\cite[Conjecture~2.1]{YouLiu2012}, they conjectured that among all trees on \(n\) vertices, the star \(S_n\) maximizes \(\lambda_2(T)/\lambda_n(T)\), while the path minimizes it. The minimization half was later disproved by Lin, Cai and Wang~\cite{LinCaiWang2024}. The maximization half, however, remains open.

\begin{conjecture}[You--Liu~\cite{YouLiu2012}]\label{conj:you-liu}
Let \(T\) be a tree on \(n\ge 3\) vertices.
Then
\(
\frac{\lambda_2(T)}{\lambda_n(T)}
\le
\frac{\lambda_2(S_n)}{\lambda_n(S_n)}
=
\frac{1}{n}.
\)
Moreover, equality should hold if and only if \(T\cong S_n\).
\end{conjecture}

There has been partial progress on Conjecture~\ref{conj:you-liu}; see You and Liu~\cite{YouLiu2012} and Lin and Miao~\cite{LinMiao2021}.
Our next theorem settles it completely.

\begin{theorem}\label{thm:tree-ratio}
Let \(T\) be a tree on \(n\ge 3\) vertices.
Then
\(
\frac{\lambda_2(T)}{\lambda_n(T)}\le \frac1n,
\)
with equality if and only if \(T\cong S_n\).
\end{theorem}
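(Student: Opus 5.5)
We need $n\lambda_2(T)\le \lambda_n(T)$ for every tree $T$ on $n\ge 3$ vertices, with equality only for the star. The plan is to bound $\lambda_n$ from below and $\lambda_2$ from above by quantities that can be compared.

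\textbf{Lower bound on $\lambda_n$.} We use the standard bound $\lambda_n(T)\ge \Delta+1$, where $\Delta$ is the maximum degree. For a star this is an equality, since $\lambda_n(S_n)=n$ and $\lambda_2(S_n)=1$. Otherwise it is strict, since equality $\lambda_n=\Delta+1$ for a connected graph forces $\Delta=n-1$. Hence it suffices to show
\[
\lambda_2(T)\le \frac{\Delta+1}{n}
\]
for every tree, with strict inequality unless $T$ is a star. (When $T$ is not a star, the strict inequality for $\lambda_n$ already makes the final comparison strict, so the bound on $\lambda_2$ may be non-strict.)

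\textbf{Upper bound on $\lambda_2$.} Every tree has a centroid vertex $v$: removing $v$ leaves branches $B_1,\dots,B_\Delta'$, each with $|B_i|\le n/2$, where $\Delta'=\deg v\le\Delta$. We use a test vector supported on two parts. Split the branches into two groups $X$ and $Y$ with sizes $a=\sum_{i\in X}|B_i|$ and $b=\sum_{i\in Y}|B_i|$, where $a+b=n-1$ and the split is as balanced as possible. Set $f=b$ on $X$, $f=-a$ on $Y$, and $f=0$ at $v$. Then $f\perp\mathbf 1$. Only the edges at $v$ contribute to the numerator, and
\[
\lambda_2\le \frac{f^\top Lf}{f^\top f}=\frac{k_Xb^2+k_Ya^2}{ab^2+ba^2}=\frac{k_Xb^2+k_Ya^2}{ab(n-1)},
\]
where $k_X$ and $k_Y$ are the numbers of branches in each group. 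A cruder alternative is to take $f$ constant on a single branch and constant on the rest, which gives $\lambda_2\le n/(|B|(n-|B|))$ for the cut edge. Neither version alone yields $(\Delta+1)/n$, so we will combine the two.

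\textbf{Case analysis.} The comparison splits as follows.
\begin{itemize}
\item If some branch $B$ has size $s$ with $2\le s\le n-2$ (a non-pendant edge cut), then $\lambda_2\le n/(s(n-s))$, which is at most $n/(2(n-2))$.
\item With a centroid branch of size at least $n/3$, the cut bound gives $\lambda_2\le 9/(2n)$ roughly. This is at most $(\Delta+1)/n$ once $\Delta\ge 4$. The small-$\Delta$ cases (paths, and trees with $\Delta=3$) need the sharper two-group bound, or a direct path-like argument.
\item For paths, $\lambda_2=2-2\cos(\pi/n)\approx\pi^2/n^2\le 3/n$ holds for $n\ge 3$.
\item If all branches are small (each less than $n/3$), then there are many branches. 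Balancing gives $a,b\ge (n-1)/3$, and the bound $\lambda_2\le (k_Xb^2+k_Ya^2)/(ab(n-1))$ is at most about $\Delta'\cdot\max(a,b)/(ab)$. This yields $O(\Delta/n)$, and the constant must be made at most $1$.
\end{itemize}

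\textbf{Main obstacle.} The difficulty is sharpening constants to get exactly $(\Delta+1)/n$, rather than $C\Delta/n$. To handle it I would weight the test vector more cleverly: let $f$ be linear in the distance from $v$ (like a Fiedler vector), or assign per-branch values $f|_{B_i}=c_i$ with $\sum_i c_i|B_i|=0$. This gives
\[
\lambda_2\le \frac{\sum_i c_i^2}{\sum_i |B_i|c_i^2}
\]
after the constant shift at $v$ is accounted for. Optimizing the $c_i$ (taking $c_i$ larger on small branches, subject to orthogonality) should produce a bound of the form $\Delta'/(n-1)$ plus a correction. This in turn can be compared with $(\Delta+1)/n$. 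I expect this optimization, together with the edge cases for small $n$ and small $\Delta$, to be the main work. Equality tracing then reduces to $\lambda_n=\Delta+1$, which forces the star.
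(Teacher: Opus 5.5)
There is a genuine gap. Your overall architecture matches the paper's: a lower bound $\lambda_n\ge(\text{degree})+1$ combined with a centroid-based upper bound on $\lambda_2$. But you never actually prove $\lambda_2(T)\le(\Delta+1)/n$, and that inequality is the whole content of the theorem. The case analysis does not close:
\begin{itemize}
\item The first bullet only gives $\lambda_2\le n/(2(n-2))=\Theta(1)$, which is useless against a target of order $\Delta/n$.
\item The second bullet leaves $\Delta=3$ open; there the cut bound gives $9/(2n)>4/n$.
\item The fourth bullet yields only a bound of the form $C\Delta'/(n-1)$ with $C>1$. You say yourself that the constant ``must be made at most $1$'' without saying how.
\item The final paragraph explicitly defers the optimization of the $c_i$ as ``the main work''.
\end{itemize}
Separately, the strictness $\lambda_n>\Delta+1$ for non-stars is the Grone--Merris equality case, a genuine theorem you would need to cite. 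The paper avoids it by making the $\lambda_2$ bound strict instead.

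The missing idea is the right choice of split. Do not balance the two groups; instead put the largest centroid branch $C_1$ (size $s$) against the union of the other $d-1$ branches (size $t=n-1-s$), where $d=\deg v$. Already your $f(v)=0$ vector with $f=t$ on $C_1$ and $f=-s$ elsewhere gives
\[
\lambda_2\le\frac{t^2+(d-1)s^2}{st(n-1)}=\frac{1}{n-1}\Bigl(\frac ts+(d-1)\frac st\Bigr).
\]
Since $s\ge (n-1)/d$ forces $s/t\ge 1/(d-1)$, convexity in $s/t$ gives $\lambda_2\le d/(n-1)<(d+1)/n$ for $d\le n-2$ whenever $s\le t$. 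The only remaining case is $s=n/2$, where the cut bound $4/n$ suffices once paths are treated separately.

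The paper instead lets the value at $v$ be free. It minimizes the Rayleigh quotient over vectors constant on $C_1$, $\{v\}$ and $B=\bigcup_{i\ge2}C_i$, and identifies the minimum as the smaller root of $p_s(x)=stx^2-[dst+(d-1)s+t]x+(d-1)n$. It then checks $p_s((d+1)/n)<0$ on $[(n-1)/d,\,n/2]$ by convexity in $s$, which gives the strict bound $\lambda_2<(d+1)/n$ uniformly in $s$. Finally it pairs this with $\lambda_n\ge d+1$, obtained by interlacing with the star on $\{v\}\cup N(v)$.
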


The second problem concerns Hamiltonicity. Gu~\cite[Conjecture~5.11]{GH22} conjectured that a graph with sufficiently large Laplacian eigenratio must contain a Hamilton cycle; see also~\cite[Section~8]{LiuNing2023}.

\begin{conjecture}[Gu~\cite{GH22}]\label{Conj:GH}
There exists an absolute constant \(c<1\) such that for any graph \(G\) on \(n\ge 3\) vertices, if
\(
\frac{\lambda_2(G)}{\lambda_n(G)}\ge c,
\)
then \(G\) contains a Hamilton cycle.
\end{conjecture}

Our next theorem confirms this conjecture.

\begin{theorem}\label{thm:GHmain}
There exists an absolute constant \(c<1\) such that for any graph \(G\) on \(n\ge 3\) vertices, if
\(
\frac{\lambda_2(G)}{\lambda_n(G)}\ge c,
\)
then \(G\) contains a Hamilton cycle.
\end{theorem}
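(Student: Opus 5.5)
The plan is to reduce Hamiltonicity to a pseudorandomness statement and then invoke a known Hamiltonicity theorem for expanders. Concretely, I would use the recent result of Draganić, Montgomery, Munhá Correia, Pokrovskiy and Sudakov on Hamiltonicity of expanders. It says there is an absolute constant \(C\) such that every \(n\)-vertex graph \(G\) satisfying (i) and (ii) below is Hamiltonian:
(i) \(|N(X)\setminus X|\ge C|X|\) for every \(X\subseteq V(G)\) with \(|X|<n/(2C)\);
(ii) there is an edge between any two disjoint sets of size at least \(n/(2C)\).
Fix this \(C\), and choose \(c=c(C)<1\) close enough to \(1\). It then suffices to show that \(\lambda_2(G)/\lambda_n(G)\ge c\) forces (i) and (ii).

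\emph{Step 1: basic consequences.} Write \(\Delta\) and \(\delta\) for the maximum and minimum degree. If \(G\) has an edge, then \(\lambda_n\ge \Delta+1\). Also \(\lambda_2\le \kappa(G)\le\delta\) by Fiedler. Hence \(c(\Delta+1)\le\lambda_2\le\delta\), so \(G\) is connected and nearly regular.

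\emph{Step 2: a Laplacian mixing lemma.} Set \(\alpha=(\lambda_2+\lambda_n)/(2n)\) and \(\beta=(\lambda_n-\lambda_2)/2\). On \(\mathbf 1^\perp\), the matrix \(L_G-\alpha(nI-J)\) has norm at most \(\beta\). Take disjoint \(S,T\) and write \(x=1_S-\tfrac{|S|}{n}\mathbf 1\), \(y=1_T-\tfrac{|T|}{n}\mathbf 1\). Then \(e(S,T)=-x^\top L_G y\), and this gives
\[
\bigl|e(S,T)-\alpha|S||T|\bigr|\le \beta\sqrt{|S||T|}.
\]
Moreover \(\beta/(\alpha n)=(\lambda_n-\lambda_2)/(\lambda_n+\lambda_2)\le (1-c)/(1+c)\), and \(\alpha n\ge \lambda_2\ge c\Delta\). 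For \(|S|,|T|\ge n/(2C)\) we have \(\sqrt{|S||T|}\le |S||T|\cdot 2C/n\). So \(e(S,T)>0\) as soon as \(\beta\cdot 2C/n<\alpha\), i.e.\ \((1-c)/(1+c)<1/(2C)\). This gives (ii).

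\emph{Step 3: vertex expansion of small sets, which I expect to be the main obstacle.} The difficulty is that edge expansion alone only yields \(|N(X)|\gtrsim|X|\), whereas we need expansion by the large factor \(C\). Let \(|X|<n/(2C)\), put \(Y=N(X)\setminus X\), and suppose for contradiction that \(|Y|<C|X|\).

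First bound the edges leaving \(X\). The quadratic form gives \(e(X,X^c)\ge \lambda_2|X|(n-|X|)/n\ge c\Delta|X|(1-\tfrac1{2C})\). All these edges go into \(Y\).

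Next bound \(e(X,Y)\) from above using the mixing lemma with \(S=X\), \(T=Y\):
\[
e(X,Y)\le \alpha C|X|^2+\beta\sqrt{C}\,|X| \le \Delta|X|\Bigl(\tfrac{\lambda_n}{\Delta}\cdot\tfrac{C|X|}{n}+\sqrt C\,\tfrac{1-c}{1+c}\cdot\tfrac{\lambda_n}{\Delta}\Bigr).
\]
We need \(\lambda_n/\Delta=O(1)\), which holds since \(\lambda_n\le 2\Delta\). With \(|X|<n/(2C)\), the first term is at most about \(\Delta|X|\) times a constant below \(c\) only if handled carefully. So I would refine the split: treat \(|X|\le \eta n/C^2\) via the above, where the first term is \(O(\eta)\Delta|X|\). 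Treat the middle range \(\eta n/C^2\le|X|<n/(2C)\) instead via (ii)-type mixing between \(X\) and \(V\setminus(X\cup Y)\). That set has size \(\ge n/2\), receives no edges from \(X\), yet mixing forces \(\alpha|X|n/2>\beta\sqrt{|X|n}\) once \(c\) is close to \(1\) in terms of \(\eta,C\). In both ranges we get a contradiction, provided \(c\) is chosen close enough to \(1\).

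Assembling these steps gives (i) and (ii), and the theorem follows from the expander Hamiltonicity theorem. The delicate bookkeeping is in Step 3: choosing \(\eta\) and then \(c\) so that both ranges of \(|X|\) are covered with absolute constants.
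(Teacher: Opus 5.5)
Your proposal is correct. It shares the paper's framework: verify both $C$-expander conditions spectrally, then invoke Dragani\'c--Montgomery--Correia--Pokrovskiy--Sudakov. Your Step~2 is essentially the paper's argument for (b). The genuine difference lies in the vertex expansion (i).

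\textbf{How the paper gets (i).} It applies Haemers' inequality $\frac{|X||Z|}{(n-|X|)(n-|Z|)}\le\bigl(\frac{\lambda_n-\lambda_2}{\lambda_n+\lambda_2}\bigr)^2$ to $X$ and $Z=V\setminus(X\cup N(X))$. Because $n-|Z|=|X\cup N(X)|$, this single inequality gives $|X\cup N(X)|\ge(C+1)|X|$ for every $|X|<n/(2C)$, with the explicit choice $c=1-\frac{1}{2\max\{2,C_0\}}$.

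\textbf{How your route differs.} Your mixing lemma is the same spectral fact in a lossy form. Your own computation actually gives the error term $\beta\|x\|\|y\|=\beta\sqrt{|S||T|(1-|S|/n)(1-|T|/n)}$. If you keep these factors, $e(X,Z)=0$ becomes Haemers' inequality verbatim. Dropping the factor $1-|Z|/n$ is exactly what makes your non-edge argument useless for $|X|=o(n)$. That loss is what forces your extra ingredient $e(X,X^c)\ge\lambda_2|X|(n-|X|)/n$ for small sets. Your route is self-contained and the ``edge boundary versus mixing'' argument is natural; the paper's is shorter and case-free.

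\textbf{Your case split is unnecessary.} Since $|Y|<C|X|<n/2$, you have $\alpha|X||Y|<\frac{\lambda_2+\lambda_n}{4}|X|$. So your first estimate already contradicts $e(X,Y)\ge\lambda_2(1-\frac{1}{2C})|X|$ on the whole range $|X|<n/(2C)$, once $\lambda_2/\lambda_n$ is close enough to $1$, because $\frac12<1-\frac{1}{2C}$.

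\textbf{Two harmless slips.} In the middle range you only have $|Z|>\frac{(C-1)n}{2C}$, not $|Z|\ge n/2$; this still suffices. Fiedler's bound $\lambda_2\le\kappa(G)$ fails for $K_n$, but Step~1 can be bypassed by comparing everything with $\lambda_n$ directly. Neither affects the argument.
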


The proof uses Haemers' separation inequality (see, e.g.,~\cite{Haemers95}) to show that a large Laplacian eigenratio forces strong expansion, and then invokes the recent breakthrough of Dragani\'c, Montgomery, Correia, Pokrovskiy, and Sudakov~\cite{HamExpanders} on Hamiltonicity of graph expanders.

\subsection{Paper organization}

The rest of the paper is organized as follows.
In Section~\ref{sec:counterexamples}, we prove Theorem~\ref{thm:counterexamples} and show that Conjecture~\ref{conj:spielman} fails for infinitely many average degrees greater than \(2\).
Section~\ref{sec:low-degree} treats the regime \(1\le d\le 2\) and proves Theorem~\ref{thm:d-at-most-2}.
In Section~\ref{sec:regular}, we turn to the regular case and prove Theorems~\ref{thm:regular-ratio}--\ref{thm:ramanujan}.
Section~\ref{sec:trees} establishes Theorem~\ref{thm:tree-ratio}, thereby proving Conjecture~\ref{conj:you-liu}.
Section~\ref{sec:hamiltonicity} proves Theorem~\ref{thm:GHmain}, confirming Conjecture~\ref{Conj:GH}.
Finally, we conclude with a discussion of future directions.

\section{\texorpdfstring{Counterexamples for infinitely many average degrees above $2$}{Counterexamples for infinitely many degrees above 2}}\label{sec:counterexamples}

We begin with the negative result on Conjecture~\ref{conj:spielman}. 
In this section, we show that the conjecture fails for infinitely many average degrees slightly above $2$.

Our construction is based on bipartite Ramanujan graphs from Marcus--Spielman--Srivastava \cite{MarcusSpielmanSrivastava2015}. More precisely, starting from a connected bipartite $q$-regular Ramanujan graph $H$, we attach $m-1$ pendant vertices to each vertex of $H$. Equivalently, the resulting graph can be viewed as the \emph{corona} \(H\circ \overline{K_{m-1}}\). This terminology goes back to Frucht and Harary~\cite{FruchtHarary1970}, and general spectral formulae for corona graphs are available in the literature; see, for instance, \cite{BarikPatiSarma2007,Liu2014}. 
For convenience, and since we will need monotonicity in the base Laplacian eigenvalue, we include a short self-contained derivation for our setting as follows.
For any $\lambda\geq 0$, define the following \(m\times m\) matrix
\[
A_m(\lambda):=
\begin{pmatrix}
\lambda+m-1 & -1 & -1 & \cdots & -1\\
-1 & 1 & 0 & \cdots & 0\\
-1 & 0 & 1 & \cdots & 0\\
\vdots & \vdots & \vdots & \ddots & \vdots\\
-1 & 0 & 0 & \cdots & 1
\end{pmatrix}.
\]

\begin{lemma}\label{lem:pendant-block}
Let \(m\ge 2\) and \(\lambda\ge 0\). 
Then the spectrum of the matrix $A_m(\lambda)$ is given by 
\[
\Spec(A_m(\lambda))= \{1^{(m-2)},\theta_-(\lambda),\theta_+(\lambda)\},
\]  
\[
\text{where} \qquad  \theta_-(\lambda)
:=
\frac{m+\lambda-\sqrt{(m+\lambda)^2-4\lambda}}{2} \qquad \text{ and } \qquad \theta_+(\lambda)
:=
\frac{m+\lambda+\sqrt{(m+\lambda)^2-4\lambda}}{2}.
\]
In particular, both $\theta_-(\lambda)$ and $\theta_+(\lambda)$ are strictly increasing on $[0,\infty)$, and if $\lambda>0$, then \(0<\theta_-(\lambda)<1<\theta_+(\lambda)\).

\end{lemma}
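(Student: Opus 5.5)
I would split $\mathbb{R}^m = \mathbb{R}e_1 \oplus \mathbb{R}^{m-1}$, where the first coordinate corresponds to the row with diagonal entry $\lambda+m-1$ and the remaining $m-1$ coordinates are the pendant coordinates.

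\textbf{The eigenvalue $1$.} Take any vector $x=(0,y)$ with $y\in\mathbb{R}^{m-1}$ and $\sum_j y_j=0$. Then $A_m(\lambda)x$ has first entry $-\sum_j y_j=0$, and its remaining entries equal $y$. So $A_m(\lambda)x=x$. These vectors form an $(m-2)$-dimensional space, which gives the eigenvalue $1$ with multiplicity at least $m-2$.

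\textbf{The two remaining eigenvalues.} The orthogonal complement of that space is two-dimensional, spanned by $e_1$ and $u=(0,1,\dots,1)$. Since $A_m(\lambda)$ is symmetric, this complement is invariant. We compute
\[
A_m(\lambda)e_1=(\lambda+m-1)e_1-u,\qquad A_m(\lambda)u=-(m-1)e_1+u.
\]
Hence the restriction is represented by a $2\times 2$ matrix with trace $\lambda+m$ and determinant $(\lambda+m-1)-(m-1)=\lambda$. Its eigenvalues are the roots of $t^2-(m+\lambda)t+\lambda=0$, namely $\theta_\pm(\lambda)$. The discriminant $(m+\lambda)^2-4\lambda=(\lambda+m-2)^2+4(m-1)$ is positive, so both roots are real and distinct. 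This yields exactly the stated spectrum, with $m-2$ copies of $1$ together with $\theta_-(\lambda)$ and $\theta_+(\lambda)$.

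\textbf{Location of the roots.} For $\lambda>0$ the product of the roots is $\lambda>0$ and their sum is positive, so both roots are positive. Let $p(t)=t^2-(m+\lambda)t+\lambda$. Then $p(1)=1-m<0$ because $m\ge 2$. Since $p$ is an upward parabola, $1$ lies strictly between the roots, so $0<\theta_-<1<\theta_+$.

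\textbf{Monotonicity.} This is the only step needing care, and it is mild.
\begin{itemize}
\item For $\theta_+$, the term $m+\lambda$ is strictly increasing. The discriminant $(\lambda+m-2)^2+4(m-1)$ is also increasing on $[0,\infty)$, since $\lambda+m-2\ge 0$. So $\theta_+$ is strictly increasing.
\item For $\theta_-$, differentiate the implicit equation $p(\theta)=0$ to get $\theta'(2\theta-m-\lambda)=\theta-1$. For $\theta_-$ we have $2\theta_--(m+\lambda)=-\sqrt{\Delta}<0$ and $\theta_--1<0$ (at $\lambda=0$, $\theta_-=0<1$). Hence $\theta_-'=(1-\theta_-)/\sqrt{\Delta}>0$.
\end{itemize}

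Alternatively, one can write $\theta_-=2\lambda/(m+\lambda+\sqrt{\Delta})$ and argue directly. However, the numerator and denominator both grow, so the derivative argument is cleaner.
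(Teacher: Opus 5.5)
Your proof is correct and follows the paper's argument essentially step for step. You use the same $(m-2)$-dimensional eigenspace for $1$, the same invariant plane spanned by $e_1$ and $(0,1,\dots,1)$, the same $2\times 2$ restriction with characteristic polynomial $t^2-(m+\lambda)t+\lambda$, and the same sign test $p(1)=1-m<0$.

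The only difference is cosmetic and lies in the monotonicity step. The paper writes out $\theta_{\pm}'(\lambda)=\tfrac12\bigl(1\pm (m+\lambda-2)/\sqrt{(m+\lambda)^2-4\lambda}\bigr)$ and uses $\sqrt{(m+\lambda)^2-4\lambda}>|m+\lambda-2|$. Your implicit-differentiation expression $\theta_-'=(1-\theta_-)/\sqrt{\Delta}$ is the same quantity.
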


\begin{proof}
Let
\(U:=\{(0,b_1,\dots,b_{m-1})\in\mathbb R^m: b_1+\cdots+b_{m-1}=0\}\), and
\(V:=\{(a,b,\dots,b)\in\mathbb R^m: a,b\in\mathbb R\}.\)
Then
\[
\mathbb R^m=U\oplus V,
\qquad
\text{where} \qquad \dim U=m-2 
\qquad \text{and} \qquad
\dim V=2.
\]

First, we note that both \(U\) and \(V\) are \(A_m(\lambda)\)-invariant. Indeed, if
\(\vec{u}=(0,b_1,\dots,b_{m-1})\in U\), then
\[
A_m(\lambda)\vec{u}
=
\bigl(-(b_1+\cdots+b_{m-1}),\,b_1,\dots,b_{m-1}\bigr)
=
(0,b_1,\dots,b_{m-1})=\vec{u},
\]
since \(b_1+\cdots+b_{m-1}=0\). Hence \(A_m(\lambda)|_U=I_U\), so \(1\) is an eigenvalue of \(A_m(\lambda)\) with multiplicity \(m-2\).
Now let \(\vec{v}=(a,b,\dots,b)\in V\). Then
\[
A_m(\lambda)\vec{v}
=
\bigl((\lambda+m-1)a-(m-1)b,\,-a+b,\dots,-a+b\bigr)\in V,
\]
so \(V\) is invariant. Relative to the basis consisting of the two vectors \((1,0,\dots,0)\) and \((0,1,\dots,1)\) of \(V\), 
the restriction \(A_m(\lambda)|_V\) is represented by
\[
\begin{pmatrix}
\lambda+m-1 & -(m-1)\\
-1 & 1
\end{pmatrix}.
\]
Therefore its two eigenvalues are the roots of
\[
\det
\begin{pmatrix}
\lambda+m-1-\theta & -(m-1)\\
-1 & 1-\theta
\end{pmatrix}
=0,
\]
namely
\[
\theta_-(\lambda)
=
\frac{m+\lambda-\sqrt{(m+\lambda)^2-4\lambda}}{2}\qquad \text{and} \qquad \theta_+(\lambda)
=
\frac{m+\lambda+\sqrt{(m+\lambda)^2-4\lambda}}{2}.
\]
It follows that
\[
\Spec(A_m(\lambda))
=
\{1^{(m-2)},\theta_-(\lambda),\theta_+(\lambda)\}.
\]

Consider the polynomial
\(p_\lambda(x):=x^2-(m+\lambda)x+\lambda,\)
whose roots are precisely \(\theta_-(\lambda)\) and \(\theta_+(\lambda)\). For $\lambda>0$, the polynomial \(p_\lambda(x)\) satisfies
\(p_\lambda(0)=\lambda>0\) and \(p_\lambda(1)=1-m<0,\)
so \(0<\theta_-(\lambda)<1<\theta_+(\lambda)\). 
Also, we have
\[
\theta_{-}'(\lambda)
=
\frac12\left(
1- \frac{m+\lambda-2}{\sqrt{(m+\lambda)^2-4\lambda}}
\right),\qquad \text{and} \qquad
\theta_{+}'(\lambda)
=
\frac12\left(
1+ \frac{m+\lambda-2}{\sqrt{(m+\lambda)^2-4\lambda}}
\right).
\]
Since
\((m+\lambda)^2-4\lambda-(m+\lambda-2)^2=4(m-1)>0,\)
we have \(\sqrt{(m+\lambda)^2-4\lambda}>|m+\lambda-2|\), and hence \(\theta_-'(\lambda)>0\) and \(\theta_+'(\lambda)>0\). 
In particular, both $\theta_-$ and $\theta_+$ are strictly increasing on $[0,\infty)$.
\end{proof}

Now we are ready to present the proof of Theorem~\ref{thm:counterexamples}.

\begin{proof}[Proof of Theorem~\ref{thm:counterexamples}]
Fix $3\leq q\leq 9$. By \cite[Theorem~5.5]{MarcusSpielmanSrivastava2015},
there exist infinitely many connected bipartite $q$-regular Ramanujan graphs.
Let $H$ be one such graph, and write $N:=|V(H)|$.

By enlarging \(m_0(q)\) if necessary, we may assume \(m_0(q)\ge 3\). Fix an integer \(m\ge m_0(q)\). Construct a graph $G=G_{q,m}(H)$ by attaching exactly $m-1$ pendant vertices to each vertex of $H$; equivalently, \(G = H\circ \overline{K_{m-1}}\). Then
$|V(G)|=N+(m-1)N=mN$,
$|E(G)|=\frac{qN}{2}+(m-1)N$,
so the average degree of $G$ is
\[
\overline d(G)=\frac{2|E(G)|}{|V(G)|}
=\frac{q+2(m-1)}{m}
=2+\frac{q-2}{m}
=d_{q,m}.
\]

We now compute the Laplacian spectrum of $G$. Label the vertices of $H$ as $v_1,\dots,v_N$. For each $r\in\{1,\dots,m-1\}$ and each
$i\in\{1,\dots,N\}$, let $u_i^{(r)}$ be the $r$th leaf attached to $v_i$.
Order the vertices of $G$ as
\[
v_1,\dots,v_N,\;
u_1^{(1)},\dots,u_N^{(1)},\;
u_1^{(2)},\dots,u_N^{(2)},\;
\dots,\;
u_1^{(m-1)},\dots,u_N^{(m-1)}.
\]
With this ordering, the Laplacian matrix \(L_G\) of \(G\) can be written as
\[
L_G=
\begin{pmatrix}
L_H+(m-1)I & -I & -I & \cdots & -I\\
-I & I & 0 & \cdots & 0\\
-I & 0 & I & \cdots & 0\\
\vdots & \vdots & \vdots & \ddots & \vdots\\
-I & 0 & 0 & \cdots & I
\end{pmatrix},
\]
where the rows and columns are partitioned into \(m\) blocks, each of size \(N\times N\).

Let
$0=\lambda_1(H)\le \lambda_2(H)\le \cdots \le \lambda_N(H)$
be the Laplacian eigenvalues of $H$, and let $\vec{x}_1,\dots,\vec{x}_N$ be an orthonormal
eigenbasis of $L_H$, so that
\[
L_H\vec{x}_i=\lambda_i(H)\vec{x}_i
\qquad (1\le i\le N).
\]
We regard all vectors as column vectors, and identify \(\mathbb R^{mN}\cong (\mathbb R^N)^{m}\) via block-column notation:
\[(\vec{y}_0,\dots,\vec{y}_{m-1}):=
\begin{pmatrix}
\vec{y}_0\\
\vec{y}_1\\
\vdots\\
\vec{y}_{m-1}
\end{pmatrix},
\qquad \vec{y}_0,\dots,\vec{y}_{m-1}\in \mathbb R^N.
\]
We equip $\mathbb R^{mN}$ with the standard inner product
\[
\langle (\vec{y}_0,\dots,\vec{y}_{m-1}),(\vec{z}_0,\dots,\vec{z}_{m-1})\rangle
:=
\sum_{r=0}^{m-1} \langle \vec{y}_r,\vec{z}_r\rangle_{\mathbb R^N}.
\]
For each $1\le i\le N$, define
\[
\mathcal W_i
:=
\Bigl\{
(a_0\vec{x}_i,a_1\vec{x}_i,\dots,a_{m-1}\vec{x}_i):a_0,\dots,a_{m-1}\in\mathbb R
\Bigr\}
\subseteq \mathbb R^{mN}.
\]
Thus $\mathcal W_i$ is the subspace of block vectors whose every block is a scalar
multiple of $\vec{x}_i$.

We claim that the subspaces $\mathcal W_1,\dots,\mathcal W_N$ are mutually orthogonal and that \(\mathbb R^{mN}=\mathcal W_1\oplus\cdots\oplus \mathcal W_N\). Indeed, for any vectors
\(\vec{u}=(a_0\vec{x}_i,\dots,a_{m-1}\vec{x}_i)\in \mathcal W_i\) and \(\vec{v}=(b_0\vec{x}_j,\dots,b_{m-1}\vec{x}_j)\in \mathcal W_j,\)
we have
\[
\langle \vec{u},\vec{v}\rangle
=
\sum_{r=0}^{m-1} \langle a_r\vec{x}_i,b_r\vec{x}_j\rangle
=
\sum_{r=0}^{m-1} a_rb_r\,\langle \vec{x}_i,\vec{x}_j\rangle.
\]
Hence $\langle \vec{u},\vec{v}\rangle=0$ whenever $i\ne j$, since
$\vec{x}_1,\dots,\vec{x}_N$ is an orthonormal basis of $\mathbb R^N$. Thus the subspaces
$\mathcal W_1,\dots,\mathcal W_N$ are pairwise orthogonal. Next, consider any
\(\vec{y}=(\vec{y}_0,\dots,\vec{y}_{m-1})\in \mathbb R^{mN}\),
where \(\vec{y}_r\in\mathbb R^N\) for each $0\leq r\leq m-1$.
Since $\vec{x}_1,\dots,\vec{x}_N$ is a basis of $\mathbb R^N$, for each $r$ we may write \(\vec{y}_r=\sum_{i=1}^N c_{r,i}\vec{x}_i\) for suitable scalars $c_{r,i}\in\mathbb R$. Therefore
\[
\vec{y}
=
\sum_{i=1}^N (c_{0,i}\vec{x}_i,c_{1,i}\vec{x}_i,\dots,c_{m-1,i}\vec{x}_i),
\]
and each summand belongs to $\mathcal W_i$. Hence \(\mathbb R^{mN}=\mathcal W_1+\cdots+\mathcal W_N\). Since these subspaces are pairwise orthogonal, the sum is direct, proving
\(
\mathbb R^{mN}=\mathcal W_1\oplus\cdots\oplus \mathcal W_N.
\)

Moreover, each $\mathcal W_i$ is $L_G$-invariant. Indeed, for any \((a_0\vec{x}_i,a_1\vec{x}_i,\dots,a_{m-1}\vec{x}_i)\in \mathcal W_i\), block multiplication with $L_G$ gives
\[
L_G(a_0\vec{x}_i,a_1\vec{x}_i,\dots,a_{m-1}\vec{x}_i)
=
\Bigl(
((\lambda_i(H)+m-1)a_0-a_1-\cdots-a_{m-1})\vec{x}_i,\,
(-a_0+a_1)\vec{x}_i,\,
\dots,\,
(-a_0+a_{m-1})\vec{x}_i
\Bigr).
\]
Since each block is again a scalar multiple of $\vec{x}_i$, the resulting vector still
belongs to $\mathcal W_i$. Therefore $\mathcal W_i$ is $L_G$-invariant. Relative to the basis
\[
(\vec{x}_i,0,\dots,0),\ (0,\vec{x}_i,0,\dots,0),\ \dots,\ (0,\dots,0,\vec{x}_i)
\]
of $\mathcal W_i$, the restriction of $L_G$ to $\mathcal W_i$ is represented by the matrix \(A_m(\lambda_i(H))\) from Lemma~\ref{lem:pendant-block}.

By Lemma~\ref{lem:pendant-block}, the eigenvalues contributed by \(\mathcal W_i\) are exactly
\[
1 \quad\text{with multiplicity }m-2,
\qquad
\theta_-(\lambda_i(H)),
\qquad
\theta_+(\lambda_i(H)).
\]
Because \(\mathbb R^{mN}=\mathcal W_1\oplus \cdots \oplus \mathcal W_N\), the full Laplacian spectrum of $G$ is
\[
\operatorname{Spec}(L_G)
=
\{1^{(N(m-2))}\}
\cup
\{\theta_-(\lambda_i(H)),\theta_+(\lambda_i(H)):1\le i\le N\}.
\]

Next we locate the second-smallest and largest eigenvalues of $L_G$. By Lemma~\ref{lem:pendant-block}, we know that both $\theta_-(\lambda)$ and $\theta_+(\lambda)$ are strictly increasing on $[0,\infty)$, and if $\lambda>0$, then \(0<\theta_-(\lambda)<1<\theta_+(\lambda)\). Because $H$ is connected, $\lambda_1(H)=0<\lambda_2(H)$. Also, $\theta_-(0)=0$ and $\theta_+(0)=m$. Since \(\theta_-(0)=0\), every other \(\theta_-(\lambda_i(H))\) lies in \((0,1)\), and all remaining eigenvalues are either \(1\) (when \(m\ge 3\)) or of the form \(\theta_+(\lambda_i(H))>1\), it follows that
\[
\lambda_2(G)=\theta_-(\lambda_2(H)),
\qquad
\lambda_{mN}(G)=\theta_+(\lambda_N(H)).
\]

Now use the special properties of $H$.
Let
\[
q=\mu_1(H)\ge \mu_2(H)\ge \cdots \ge \mu_N(H)=-q
\]
be the adjacency eigenvalues of $H$.
Since $H$ is $q$-regular, \(L_H=qI-A_H\), so
$\lambda_i(H)=q-\mu_i(H)$.
Because $H$ is bipartite, \(\lambda_N(H)=q-(-q)=2q\). Because $H$ is Ramanujan,
$\mu_2(H)\le 2\sqrt{q-1}$,
and therefore
\[
\lambda_2(H)=q-\mu_2(H)\ge q-2\sqrt{q-1}:=\alpha_q.
\]
By monotonicity of $\theta_-$ and $\theta_+$, we obtain $\lambda_2(G)\ge \theta_-(\alpha_q)$ and $\lambda_{mN}(G)=\theta_+(2q)$, and hence
\[
\frac{\lambda_2(G)}{\lambda_n(G)}
=
\frac{\lambda_2(G)}{\lambda_{mN}(G)}
\ge
\frac{\theta_-(\alpha_q)}{\theta_+(2q)}=:R_{q,m}.
\]

First, for $\theta_-(\alpha_q)$, rationalizing the numerator gives
\[
\theta_-(\alpha_q)
=
\frac{m+\alpha_q-\sqrt{(m+\alpha_q)^2-4\alpha_q}}{2}
=
\frac{2\alpha_q}{m+\alpha_q+\sqrt{(m+\alpha_q)^2-4\alpha_q}}.
\]
Now
\[
\sqrt{(m+\alpha_q)^2-4\alpha_q}
=
(m+\alpha_q)\sqrt{1-\frac{4\alpha_q}{(m+\alpha_q)^2}}
=
m+\alpha_q+O_q(m^{-1}),
\]
and hence \(
m+\alpha_q+\sqrt{(m+\alpha_q)^2-4\alpha_q}
=
2m+2\alpha_q+O_q(m^{-1})\). Therefore
\[
\theta_-(\alpha_q)
=
\frac{2\alpha_q}{2m+2\alpha_q+O_q(m^{-1})}
=
\frac{\alpha_q}{m}+O_q(m^{-2}).
\]
Next,
\[
\theta_+(2q)
=
\frac{m+2q+\sqrt{(m+2q)^2-8q}}{2}.
\]
Similarly,
\[
\sqrt{(m+2q)^2-8q}
=
(m+2q)\sqrt{1-\frac{8q}{(m+2q)^2}}
=
m+2q+O_q(m^{-1}),
\]
so \(
\theta_+(2q)
=
m+2q+O_q(m^{-1})\). It follows that
\[
R_{q,m}
=
\frac{\theta_-(\alpha_q)}{\theta_+(2q)}
=
\frac{\frac{\alpha_q}{m}+O_q(m^{-2})}{m+2q+O_q(m^{-1})}
=
\frac{\alpha_q}{m^2}+O_q(m^{-3}).
\]

We now compare $R_{q,m}$ with the conjectured bound at \(d_{q,m}=2+\frac{q-2}{m}\). Define
\[
B_{q,m}
:=
\frac{d_{q,m}-2\sqrt{d_{q,m}-1}}{d_{q,m}+2\sqrt{d_{q,m}-1}}.
\]
Writing \(\delta:=\frac{q-2}{m}\), we have
\[
B_{q,m}
=
\frac{2+\delta-2\sqrt{1+\delta}}{2+\delta+2\sqrt{1+\delta}}.
\]
Using the Taylor expansion \(
\sqrt{1+\delta}
=
1+\frac{\delta}{2}-\frac{\delta^2}{8}+O(\delta^3)\), we obtain
\[
2+\delta-2\sqrt{1+\delta}
=
2+\delta-2\left(1+\frac{\delta}{2}-\frac{\delta^2}{8}+O(\delta^3)\right)
=
\frac{\delta^2}{4}+O(\delta^3),
\]
and
\[
2+\delta+2\sqrt{1+\delta}
=
2+\delta+2\left(1+\frac{\delta}{2}-\frac{\delta^2}{8}+O(\delta^3)\right)
=
4+2\delta-\frac{\delta^2}{4}+O(\delta^3)
=
4+O(\delta).
\]
Therefore
\[
B_{q,m}
=
\frac{\frac{\delta^2}{4}+O(\delta^3)}{4+O(\delta)}
=
\frac{\delta^2}{16}+O(\delta^3)
=
\frac{(q-2)^2}{16m^2}+O_q(m^{-3}).
\]

Combining the two asymptotic expansions, we conclude that
\[
R_{q,m}-B_{q,m}
=
\left(
\alpha_q-\frac{(q-2)^2}{16}
\right)\frac{1}{m^2}
+O_q(m^{-3}).
\]
Finally,
\[
\alpha_q-\frac{(q-2)^2}{16}>0
\iff
q-2\sqrt{q-1}>\frac{(q-2)^2}{16}
\iff
(\sqrt{q-1}+1)^2<16
\iff
q<10.
\]
Since $q\leq 9$, the coefficient of $m^{-2}$ is positive. Therefore
there exists an integer $m_0(q)$ such that for all $m\ge m_0(q)$,
\[
R_{q,m}>B_{q,m}.
\]

For fixed $(q,m)$ with $m\ge m_0(q)$, define
\[
\varepsilon_{q,m}:=\frac{R_{q,m}-B_{q,m}}{2}>0.
\]
Then every graph $G$ produced by the above construction satisfies
\[
\frac{\lambda_2(G)}{\lambda_n(G)}
\ge R_{q,m}
>
B_{q,m}+\varepsilon_{q,m},
\]
and since there are infinitely many such graphs $G$, Conjecture~\ref{conj:spielman}
fails for $d=d_{q,m}$.
\end{proof}

\section{\texorpdfstring{The cases $d\leq 2$}{The cases d<=2}}\label{sec:low-degree}

We now turn to the positive side below the threshold $d=2$. In this section we show that Conjecture~\ref{conj:spielman} holds for every fixed average degree $1\le d\le 2$. We first dispose of the easy range $1\le d<2$, and then treat the borderline case $d=2$.

\begin{proposition}\label{prop:d-less-than-2}
Let $1\le d<2$. Then every graph $G$ on $n>\frac{2}{2-d}$ vertices with average degree $d$ is disconnected. Consequently,
\[
\frac{\lambda_2(G)}{\lambda_n(G)}=0
\le
\frac{d-2\sqrt{d-1}}{d+2\sqrt{d-1}}
\]
for every such graph $G$.
\end{proposition}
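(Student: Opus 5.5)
The plan is a direct edge count followed by the standard spectral fact about connectivity. Since $G$ has average degree $d$, it has exactly $|E(G)|=dn/2$ edges. A connected graph on $n$ vertices has at least $n-1$ edges, because it contains a spanning tree. So we only need to check that $dn/2<n-1$ whenever $n>\frac{2}{2-d}$. Rearranging, $dn/2<n-1$ is equivalent to $n(2-d)/2>1$, that is, $n>\frac{2}{2-d}$, which is exactly our hypothesis (and it uses $d<2$ so that $2-d>0$). Hence $G$ has fewer than $n-1$ edges and cannot be connected.

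Next we invoke the well-known fact that the multiplicity of the Laplacian eigenvalue $0$ equals the number of connected components. Since $G$ is disconnected, this gives $\lambda_2(G)=0$.

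To make the ratio well defined, we note that $\lambda_n(G)>0$. Indeed, $d\ge 1$ forces $G$ to have at least one edge, and for an edge $uv$ the vector $\vec e_u-\vec e_v$ has Rayleigh quotient $\ge 2/2>0$ (in fact $\lambda_n(G)\ge \Delta(G)+1\ge 2$). Therefore $\lambda_2(G)/\lambda_n(G)=0$.

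Finally, the right-hand side is nonnegative, because $d-2\sqrt{d-1}=(\sqrt{d-1}-1)^2\ge 0$ and the denominator $d+2\sqrt{d-1}$ is positive. This gives the claimed inequality. There is no real obstacle; the only points needing care are the strictness of the edge-count inequality and the observation that $\lambda_n(G)\neq 0$.
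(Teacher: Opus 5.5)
Your proof is correct and follows the paper's argument. The paper likewise shows that connectivity would force $|E(G)|\ge n-1$, i.e.\ $d\ge 2-\frac{2}{n}>d$, so $G$ is disconnected and $\lambda_2(G)=0$; it then observes that the right-hand side equals $\bigl(\frac{1-\sqrt{d-1}}{1+\sqrt{d-1}}\bigr)^2\ge 0$. Your explicit check that $\lambda_n(G)>0$ is extra care the paper leaves implicit; the Rayleigh quotient of $\vec e_u-\vec e_v$ is actually $(\deg u+\deg v+2)/2\ge 2$, but your weaker bound suffices.
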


\begin{proof}
Let $G$ be a graph on $n>\frac{2}{2-d}$ vertices with average degree $d$, and suppose that $G$ is connected. Then $|E(G)|\ge n-1$, and therefore
$d=\frac{2|E(G)|}{n}\ge \frac{2(n-1)}{n}=2-\frac{2}{n}>d$,
a contradiction. Hence $G$ is disconnected, so $\lambda_2(G)=0$.
Finally, for $1\le d<2$ one has
$\frac{d-2\sqrt{d-1}}{d+2\sqrt{d-1}}
=\left(\frac{1-\sqrt{d-1}}{1+\sqrt{d-1}}\right)^2\ge 0$,
which proves the claim.
\end{proof}

The remaining case $d=2$ is more intricate. Once the graph is connected, it is necessarily unicyclic, and we can exploit that structure.
For a graph $H$ and a vertex set $S\subseteq V(H)$, we write
\[
\partial_H S:=E_H\bigl(S,V(H)\setminus S\bigr)
\]
for the edge boundary of $S$. When the ambient graph is clear, we simply write $\partial S$.

The first ingredient is a standard cut bound for $\lambda_2$.

\begin{lemma}\label{lem:cut}
Let $G$ be a graph on $n$ vertices, and let $S\subseteq V(G)$ satisfy
$1\le |S|\le n-1$. Then
\[
\lambda_2(G)\le \frac{n|\partial S|}{|S|(n-|S|)}.
\]
\end{lemma}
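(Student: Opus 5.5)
The plan is to use the Courant--Fischer variational characterization of $\lambda_2(G)$ with the test vector given by the centered indicator of $S$. Recall that
\[
\lambda_2(G)=\min\left\{\frac{\vec{x}^{\top}L_G\vec{x}}{\vec{x}^{\top}\vec{x}}:\ \vec{x}\ne 0,\ \vec{x}\perp\mathbf 1\right\},
\qquad
\vec{x}^{\top}L_G\vec{x}=\sum_{uv\in E(G)}(x_u-x_v)^2 .
\]
Write $s:=|S|$. I will define $\vec{x}$ by
\[
x_v:=n-s \quad (v\in S),
\qquad
x_v:=-s \quad (v\notin S).
\]
Then $\sum_v x_v=s(n-s)-(n-s)s=0$, so $\vec{x}\perp\mathbf 1$. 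Also $\vec{x}\neq 0$, since $1\le s\le n-1$ makes both values nonzero.

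Next I compute the two quadratic forms. An edge contributes to the numerator only if it crosses between $S$ and its complement, and each such edge contributes $(n-s+s)^2=n^2$. Hence $\vec{x}^{\top}L_G\vec{x}=n^2|\partial S|$. For the denominator,
\[
\vec{x}^{\top}\vec{x}=s(n-s)^2+(n-s)s^2=s(n-s)n .
\]

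Taking the ratio gives
\[
\lambda_2(G)\le\frac{n^2|\partial S|}{n\,s(n-s)}=\frac{n|\partial S|}{|S|(n-|S|)},
\]
which is the claimed bound.

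There is no real obstacle here. The only point requiring care is choosing the shift so that the test vector is orthogonal to the all-ones vector; the centered indicator above handles this, and everything else is routine arithmetic.
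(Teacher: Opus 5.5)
Your proposal is correct and matches the paper's proof essentially verbatim: the same centered indicator test vector (value $n-|S|$ on $S$ and $-|S|$ off $S$), the same Rayleigh quotient computation $n^2|\partial S|/(n|S|(n-|S|))$, and the same appeal to the variational characterization of $\lambda_2$. You also state explicitly that $\vec{x}\neq 0$, which the paper leaves implicit.
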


\begin{proof}
Let $s=|S|$ and $t=n-s$. Define $\vec{x}\in \mathbb{R}^{V(G)}$ by
\[
x_u=
\begin{cases}
 t,&u\in S,\\
 -s,&u\notin S.
\end{cases}
\]
Then $\sum_{u\in V(G)}x_u=st-ts=0$, so $\vec{x}\perp \vec{1}$. Also,
\[
\vec{x}^{\top}L_G\vec{x}=\sum_{\{u,v\}\in E(G)}(x_u-x_v)^2=n^2|\partial S|,
\]
because only edges in $\partial S$ contribute, and each such edge contributes
$n^2$. Moreover,
\[
\vec{x}^{\top}\vec{x}=st^2+ts^2=nst.
\]
Hence the Rayleigh quotient of $\vec{x}$ is
\[
\frac{\vec{x}^{\top}L_G\vec{x}}{\vec{x}^{\top}\vec{x}}
=
\frac{n|\partial S|}{s(n-s)}.
\]
By the variational characterization of $\lambda_2(G)$, the result follows.
\end{proof}

To use this effectively, we also need a lower bound on the top Laplacian eigenvalue. The next lemma is standard; see~\cite[Theorem~4.12]{Bapat2010}.

\begin{lemma}\label{lem:lambdan-delta}
If $G$ is connected and has maximum degree $\Delta$, then \(\lambda_n(G)\ge \Delta+1\).
\end{lemma}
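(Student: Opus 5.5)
The plan is a single Rayleigh-quotient estimate. Let $v$ be a vertex of degree $\Delta$, and use the variational characterization
\[
\lambda_n(G)=\max_{\vec{x}\ne 0}\frac{\vec{x}^{\top}L_G\vec{x}}{\vec{x}^{\top}\vec{x}}.
\]
It therefore suffices to exhibit one vector whose Rayleigh quotient is at least $\Delta+1$.

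The natural test vector is the one extremal for the star $K_{1,\Delta}$. Set $x_v=\Delta$, set $x_u=-1$ for each neighbour $u$ of $v$, and set $x_w=0$ for every other vertex. Then
\[
\vec{x}^{\top}\vec{x}=\Delta^2+\Delta=\Delta(\Delta+1).
\]
The quadratic form $\sum_{uw\in E}(x_u-x_w)^2$ receives contributions from three kinds of edges:
\begin{itemize}
\item each of the $\Delta$ edges $vu$ contributes $(\Delta+1)^2$;
\item edges between two neighbours of $v$ contribute $0$;
\item edges from a neighbour to a non-neighbour contribute $1$.
\end{itemize}
All contributions are nonnegative, so
\[
\vec{x}^{\top}L_G\vec{x}\ge \Delta(\Delta+1)^2.
\]
Dividing by $\vec{x}^{\top}\vec{x}=\Delta(\Delta+1)$ gives a Rayleigh quotient of at least $\Delta+1$, and hence $\lambda_n(G)\ge \Delta+1$.

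An equivalent route goes through interlacing. $L_G$ is the Laplacian of the star $K_{1,\Delta}$ plus the Laplacian of the remaining edges, which is positive semidefinite. Hence $\lambda_n(G)\ge\lambda_{\max}(L_{K_{1,\Delta}})=\Delta+1$. The value $\Delta+1$ is the standard largest Laplacian eigenvalue of a star; one checks it directly on the vector $(\Delta,-1,\dots,-1)$.

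I do not expect a real obstacle. The only point needing care is that $\Delta\ge 1$, so that $\vec{x}\neq 0$; this holds since $G$ is connected with at least two vertices. For $n=1$ the statement is vacuous or trivially adjusted. Connectivity is not actually needed beyond this.
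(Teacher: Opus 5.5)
Your proof is correct. The paper does not prove this lemma; it only cites \cite[Theorem~4.12]{Bapat2010}.

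Your Rayleigh-quotient computation with the star vector $(\Delta,-1,\dots,-1,0,\dots,0)$ is the standard self-contained argument. It is also the same idea the paper uses later in the proof of Theorem~\ref{thm:tree-ratio} (see \eqref{eq:lambdan-lower}). There, $\lambda_n(T)\ge d+1$ is obtained by writing the principal submatrix of $L_T$ on $\{v\}\cup N(v)$ as $L_{S_{d+1}}$ plus a nonnegative diagonal and then applying Cauchy interlacing. So your write-up supplies exactly what the citation stands in for, using the paper's own tool.

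Your second route, $L_G=L_{K_{1,\Delta}}+L_{\mathrm{rest}}$ with $L_{\mathrm{rest}}\succeq 0$, is also fine. Strictly speaking it is a Weyl monotonicity argument rather than interlacing.

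One small correction: the case $n=1$ is not vacuous. $K_1$ is connected with $\Delta=0$ and $\lambda_1=0<1$, so the lemma is false there and tacitly needs $n\ge 2$ (equivalently $\Delta\ge 1$). This is harmless, since the paper only applies the lemma with $n\ge 3$. You are right that connectivity plays no other role: the bound holds for any graph with at least one edge.
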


\begin{corollary}\label{cor:unicyclic-lambdan}
If $G$ is connected and unicyclic, then \(\lambda_n(G)\ge 3\).
\end{corollary}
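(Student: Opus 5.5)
The corollary follows directly from Lemma~\ref{lem:lambdan-delta}; the plan is to show that a connected unicyclic graph has maximum degree $\Delta\ge 2$.

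Let $G$ be connected and unicyclic, so it contains exactly one cycle $C$ (simple graphs force $|C|\ge 3$). Every vertex on $C$ has two neighbours on $C$. Hence $\Delta(G)\ge 2$, and Lemma~\ref{lem:lambdan-delta} gives
\[
\lambda_n(G)\ge \Delta(G)+1\ge 3 .
\]

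No step here is a real obstacle; the only point to check is that $\Delta\ge 2$, which holds because $G$ contains a cycle.

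As a sanity check, the bound is attained: for the cycle $C_n$ with $n$ odd, $\lambda_n(C_n)=2-2\cos\bigl(\pi(n-1)/n\bigr)<4$, and for $n=3$ it equals exactly $3$. This confirms that the constant $3$ cannot be improved in general.

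If one prefers not to rely on the cited lemma, there is a self-contained alternative. Pick a vertex $v$ of maximum degree $\Delta$ and use the test vector $x_v=\Delta$, $x_u=-1$ for each neighbour $u$ of $v$, and $0$ elsewhere. The Rayleigh quotient of this vector is at least $\Delta+1$, because the star edges at $v$ alone contribute $\Delta(\Delta+1)^2$ to $\vec{x}^{\top}L_G\vec{x}$, while $\vec{x}^{\top}\vec{x}=\Delta^2+\Delta$. This gives the same bound $\lambda_n(G)\ge\Delta+1\ge 3$.
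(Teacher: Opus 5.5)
Your proof is correct and matches the paper's: both note that a connected unicyclic graph has $\Delta(G)\ge 2$ and then apply Lemma~\ref{lem:lambdan-delta}. Your optional test-vector argument is a valid self-contained proof of the bound $\lambda_n(G)\ge\Delta(G)+1$, and your $C_3$ example correctly shows that the constant $3$ is attained.
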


\begin{proof}
A connected unicyclic graph has maximum degree at least $2$, so by Lemma~\ref{lem:lambdan-delta}, we have \(\lambda_n(G)\ge \Delta(G)+1\ge 3\).
\end{proof}

To handle the complementary case of bounded maximum degree, we need two simple structural lemmas about the unique cycle and its pendant trees.

\begin{lemma}\label{lem:cyclic-interval}
Let $w_1,\dots,w_r>0$ be cyclically ordered numbers such that $\sum_{i=1}^r w_i=n$ and $\max_{1\le i\le r} w_i<\frac n2$. Then there exists a cyclic interval $I\subseteq \{1,\dots,r\}$ such that
\[
\frac n3\le \sum_{i\in I} w_i\le \frac n2.
\]
\end{lemma}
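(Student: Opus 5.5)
We prove Lemma~\ref{lem:cyclic-interval} with a sliding-window argument over the cyclic sequence. For indices $a$ and $b$ we write $[a,b]$ for the cyclic interval $a,a+1,\dots,b$, taken modulo $r$. Let $S(a,b)$ denote the sum of the $w_i$ over this interval.

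First we dispose of the degenerate case. If some single weight satisfies $w_i\ge n/3$, then $I=\{i\}$ works, since $w_i<n/2$ by hypothesis. So we may assume every $w_i<n/3$.

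Start from $a=1$ and greedily extend the interval $[1,b]$, increasing $b$ one step at a time. Let $b$ be the first index for which $S(1,b)\ge n/3$. Such a $b$ exists because $S(1,r)=n\ge n/3$. Since $S(1,b-1)<n/3$ and $w_b<n/3$, we get
\[
S(1,b)<\tfrac{2n}{3}.
\]
If $S(1,b)\le n/2$, we are done.

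Otherwise $S(1,b)\in(n/2,2n/3)$. In that case we take the complementary cyclic interval $J=\{b+1,\dots,r\}$, whose sum is
\[
n-S(1,b)\in\bigl(\tfrac n3,\tfrac n2\bigr).
\]
This $J$ is a nonempty cyclic interval: it cannot be empty, because its sum exceeds $n/3>0$. So $J$ is the required interval, and this concludes the argument.

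There is no real obstacle here. The only points needing care are the bound $w_b<n/3$ (which relies on disposing of large single weights first) and the observation that the complement of a cyclic interval is again a cyclic interval. Note that the hypothesis $\max_i w_i<n/2$ is used only in the first step.
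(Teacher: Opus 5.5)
Your proof is correct and is essentially the paper's argument: take the first prefix with sum at least $n/3$, and if that sum exceeds $n/2$, pass to the complementary interval. The only difference is the order of cases: you dispose of a single weight $w_i\ge n/3$ up front, whereas the paper handles it last, by observing that a prefix sum $s_j>2n/3$ forces $w_j>n/3$, so that $\{j\}$ works.
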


\begin{proof}
After a cyclic relabeling, let $j$ be the smallest index such that
\[
s_j:=\sum_{i=1}^j w_i\ge \frac n3.
\]
If $s_j\le \frac n2$, then $I=\{1,\dots,j\}$ works. Assume next that $\frac n2<s_j\le \frac{2n}{3}$. Then the complementary cyclic
interval $I=\{j+1,\dots,r\}$ satisfies
\[
\frac n3\le n-s_j<\frac n2.
\]
Finally, assume that $s_j>\frac{2n}{3}$. Since $j$ was chosen minimally, we
have $s_{j-1}<\frac n3$, and hence
\[
w_j=s_j-s_{j-1}>\frac{2n}{3}-\frac n3=\frac n3.
\]
Because $w_j<\frac n2$ by hypothesis, the one-term interval $I=\{j\}$ works.
This proves the lemma.
\end{proof}

\begin{lemma}\label{lem:large-tree}
Let $G$ be a connected unicyclic graph on $n$ vertices, and let $C=v_1v_2\cdots v_rv_1$ be its unique cycle. For each $i\in\{1,\dots,r\}$, let $T_i$ be the component of $G-E(C)$ containing $v_i$. Assume that there exists $j_0$ such that \(|V(T_{j_0})|\ge \frac n2\). Then
\[
\frac{\lambda_2(G)}{\lambda_n(G)}\le \frac{2}{n}.
\]
\end{lemma}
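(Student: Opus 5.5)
We combine the cut bound of Lemma~\ref{lem:cut} with a lower bound on $\lambda_n(G)$ in terms of the maximum degree (Lemma~\ref{lem:lambdan-delta}). The tree $T:=T_{j_0}$ is large, so we cut it at its root $v:=v_{j_0}$ and look at how its mass is distributed among the branches at $v$. Write $\deg_T(v)=p$, and let $B_1,\dots,B_p$ be the components of $T-v$, with sizes $b_1,\dots,b_p$. Then
\[
\sum_{\ell} b_\ell = |V(T)|-1 \ge \frac n2-1 .
\]
In $G$ the vertex $v$ has degree $p+2$, so Lemma~\ref{lem:lambdan-delta} gives $\lambda_n(G)\ge p+3$.

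\textbf{Case 1: some branch is large.} Suppose some branch $B_\ell$ has size $b_\ell\ge n/4$ (the precise threshold will be tuned below). If also $b_\ell\le n/2$, take $S=B_\ell$. Then $|\partial S|=1$, and Lemma~\ref{lem:cut} gives
\[
\lambda_2(G)\le \frac{n}{|S|(n-|S|)}\le \frac{n}{(n/4)(3n/4)}=\frac{16}{3n}.
\]
Together with $\lambda_n\ge 3$ this yields only $16/(9n)$. This is fine, since $16/9<2$.

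If instead $b_\ell>n/2$, descend into $B_\ell$. Repeatedly walking toward the heavy subtree, we find an edge $e$ of the tree whose removal separates a subtree $S$ with $n/4\le|S|\le n/2$, or one with $n/3\le |S|\le n/2$ if we argue via a centroid-type choice. Indeed, along the walk sizes decrease. At a vertex whose hanging subtree has size $>n/2$ but all of whose children's subtrees are $\le n/2$, either some child subtree is $\ge n/3$, which is good, or all are $<n/3$. In the latter case we group children subtrees greedily, as in Lemma~\ref{lem:cyclic-interval}, to get a union $S$ of child subtrees with $n/3\le|S|\le n/2$ and $|\partial S|$ equal to the number $k$ of children used.

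\textbf{Case 2: the grouped cut.} This is the main obstacle: when $S$ is a union of $k$ child subtrees of a vertex $w$, the boundary grows to $k$. The point is that $\deg(w)\ge k+1$, so $\lambda_n\ge k+2$, and the ratio is
\[
\frac{\lambda_2(G)}{\lambda_n(G)} \le \frac{n k}{|S|(n-|S|)(k+2)} \le \frac{k}{k+2}\cdot\frac{9}{2n}.
\]
This is $\le 2/n$ only when $k/(k+2)\le 4/9$, i.e.\ $k\le 1$. So the greedy grouping must be done more cleverly. Instead of a balanced $S$, I would allow $S$ to be all of $w$'s children subtrees plus $w$ itself, or its complement. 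More precisely, I would choose $S$ optimally among sets of size $s$ with cut $k$ and use the sharper bound
\[
\frac{n k}{s(n-s)(k+2)} .
\]
We would need $s(n-s)\ge nk(k+2)^{-1}n/2$, which for $k$ large requires $s\approx n/2$. Achieving this needs the finer granularity coming from the small children: each has size $<n/3$, but greedy addition overshoots $n/2$ by at most the last child's size.

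So the plan is as follows. Let $w$ be the deepest vertex whose subtree, together with everything hanging off it in the direction away from the cycle, has size $\ge n/2$; this $w$ exists since $|V(T)|\ge n/2$. Its children subtrees $c_1\ge c_2\ge\cdots$ each have size $<n/2$. If $c_1\ge n/3$, cut the single edge to get $S$ with $|\partial S|=1$, giving ratio $\le 9/(2n)/3=3/(2n)$. Otherwise, take $S$ to be the whole subtree hanging below $w$, including $w$. Then $|S|\ge n/2$ and $|\partial S|=1$, cut at the edge to $w$'s parent, or at the two cycle edges if $w=v$. The complement is also $\le n/2$ and nonempty unless $S$ is nearly everything. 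When $|S|$ is too close to $n$, we use the cut bound with the degree of $w$ being large, or sizes again.

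Verifying the constant $2$ in each branch of this case analysis, particularly the case $w=v$ where $|\partial S|=2$ and $\lambda_n\ge p+3$, is where I expect the real work. It is also where $|V(T)|\ge n/2$ is used to guarantee the complement $V(G)\setminus V(T)$ is suitably balanced.
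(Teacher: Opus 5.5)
\textbf{Verdict: genuine gap.} The case that carries all the difficulty is left open.

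Your easy sub-cases are fine: a branch with size in $[n/4,n/2]$, or a child subtree of $w$ with at least $n/3$ vertices, each cut off by a single boundary edge, combined with $\lambda_n\ge 3$. The problem is the case where every child subtree of $w$ has fewer than $n/3$ vertices. There neither of your two candidate cuts works.
\begin{itemize}
\item Grouping children fails by your own computation.
\item Taking $S$ to be the whole subtree at $w$ fails whenever $|S|$ is close to $n$ while $\deg(w)$ stays bounded.
\end{itemize}
For example, let $G$ be a triangle $vxy$ with five pendant paths of $(n-3)/5$ vertices each attached at $v$. No branch reaches $n/4$, and $w=v$. Then $S=V(T_v)$ has $|S|=n-2$ and $|\partial S|=2$, so Lemma~\ref{lem:cut} gives only $\lambda_2\le n/(n-2)$, while $\lambda_n\ge 8$. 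Your ratio bound is therefore about $1/8$, not $2/n$. The same thing happens off the cycle: hang the five paths from a new neighbour $w$ of $v$. Then $|S|=n-3$ and $|\partial S|=1$, but $\lambda_n\ge 7$ only. Your fallback, ``use the degree of $w$ being large, or sizes again,'' is not an argument, and you flag this branch yourself as unverified.

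\textbf{The missing idea: no balanced cut is needed.} Descend from $v_{j_0}$ while the current vertex has a child subtree with more than $n/2$ vertices, and let $u$ be where this stops. Cut off only the \emph{largest} child subtree $C_1$ of $u$, so the boundary is always a single edge.

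Suppose $u$ has $t$ children. Averaging gives $a:=|V(C_1)|\ge \lfloor n/2\rfloor/t$, or $a\ge (n-2)/(2t)$ if $u=v_{j_0}$. The same $t$ controls the top eigenvalue: $\lambda_n\ge \deg(u)+1\ge t+2$, respectively $t+3$. Since $a\le n/2$ and $a(n-a)$ is increasing on $[0,n/2]$, for $n=2m$ this gives
\[
\frac{\lambda_2}{\lambda_n}\le \frac{n}{a(n-a)(t+2)}\le \frac{4t^2}{(2t-1)(t+2)}\cdot\frac1n\le \frac2n,
\]
using $4t^2\le 2(2t-1)(t+2)$. The odd case and the case $u=v_{j_0}$ reduce in the same way to positivity of a concave quadratic in $t$ at the endpoints of its range.

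The point you missed is this. Insisting on $|S|\ge n/3$ forces either a large child or a large boundary. Instead, keep the boundary equal to $1$ and let the piece shrink to about $n/(2t)$. The cut bound on $\lambda_2$ then grows only like $2t/n$, and the degree bound $\lambda_n\ge t+2$ absorbs exactly this growth.
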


\begin{proof}
Clearly we have $n\geq 3$.
If $G$ is a cycle, then $|V(T_i)|=1$ for every $i$,
which contradicts the assumption
$|V(T_{j_0})|\ge \lceil n/2\rceil \geq 2$. 
So in the rest of the proof, we assume that $G$ is not a cycle.

Root $T_{j_0}$ at $v_{j_0}$. Starting from the root, move downward as long as the current vertex has a child subtree of size strictly larger than $n/2$. Since subtree sizes strictly decrease at each such step, this process must terminate. Let $u$ be the terminal vertex, and let \(C_1,\dots,C_t\) be the child subtrees of $u$ in the rooted tree $T_{j_0}$. Relabel so that
\[
|V(C_1)|\ge \cdots \ge |V(C_t)|.
\]
Set \(a:=|V(C_1)|\). By construction, $a\le n/2$. Let $S:=V(C_1)$. Since $C_1$ is a child subtree of $u$, it meets $V(G)\setminus S$ in exactly one edge, namely the parent edge joining the root of $C_1$ to $u$. Hence \(|\partial_G S|=1\). By Lemma~\ref{lem:cut},
\[
\lambda_2(G)\le \frac{n}{a(n-a)}.
\]

We now distinguish two cases.

\smallskip
\noindent\underline{\textit{Case 1: $u\neq v_{j_0}$.}}
Then the rooted subtree $T_u$ has more than $n/2$ vertices, because we moved
from the parent of $u$ to $u$ only when the child subtree at $u$ had size
$>n/2$. Therefore
\[
\sum_{\ell=1}^t |V(C_\ell)|=|V(T_u)|-1\ge \left\lfloor \frac n2\right\rfloor, \qquad \text{ implying that } \qquad
a\ge \frac{\lfloor n/2\rfloor}{t}.
\]
Also, $u$ has exactly $t$ children and one parent in $G$, so \(\deg_G(u)=t+1\). Then Lemma~\ref{lem:lambdan-delta} gives
\[
\lambda_n(G)\ge \deg_G(u)+1=t+2, \qquad \text{ and thus } \qquad
\frac{\lambda_2(G)}{\lambda_n(G)}
\le
\frac{n}{a(n-a)(t+2)}.
\]
If $n=2m$ is even, then $m/t \le a \le n/2$, so
\[
\frac{\lambda_2(G)}{\lambda_n(G)}
\le
\frac{2m}{(m/t)(2m-m/t)(t+2)}
=
\frac{4t^2}{(2t-1)(t+2)}\cdot \frac1n
\le \frac{2}{n}.
\]
If $n=2m+1$ is odd, then $m/t \le a \le n/2$, and therefore
\[
\frac{\lambda_2(G)}{\lambda_n(G)}
\le
\frac{2m+1}{(m/t)(2m+1-m/t)(t+2)}.
\]
So it is enough to show that
\[
2(t+2)\frac{m}{t}\left(2m+1-\frac{m}{t}\right)\ge (2m+1)^2.
\]
After multiplying by $t^2$, this becomes
\[
6m^2t-4m^2-2mt^2+4mt-t^2\ge 0.
\]
The left-hand side is a concave quadratic in $t$. Since $1\le t\le n-2=2m-1$,
it suffices to check the endpoints $t=1$ and $t=2m-1$. At these endpoints it
equals
\[
2m^2+2m-1
\qquad\text{and}\qquad
(2m+1)(2m^2-1),
\]
respectively, both positive. Hence $\lambda_2(G)/\lambda_n(G)\le 2/n$.

\medskip

\noindent\underline{\textit{Case 2: $u=v_{j_0}$.}}
In this case,
\[
\sum_{\ell=1}^t |V(C_\ell)|=|V(T_{j_0})|-1\ge \frac n2-1,
\]
and hence
\[
a\ge \frac{n/2-1}{t}=\frac{n-2}{2t}.
\]
Also, $u=v_{j_0}$ has $t$ children and two neighbours on the cycle, so \(\deg_G(u)=t+2\). Therefore Lemma~\ref{lem:lambdan-delta} gives
\[
\lambda_n(G)\ge \deg_G(u)+1=t+3.
\]
Consequently,
\[
\frac{\lambda_2(G)}{\lambda_n(G)}
\le
\frac{n}{a(n-a)(t+3)}
\le
\frac{n}{\bigl((n-2)/(2t)\bigr)\bigl(n-(n-2)/(2t)\bigr)(t+3)}.
\]
Thus it remains to show that
\[
(t+3)(n-2)(2tn-n+2)\ge 2t^2n^2.
\]
The left-hand side minus the right-hand side equals
\[
5n^2t-3n^2-4nt^2-8nt+12n-4t-12.
\]
Again this is a concave quadratic in $t$. Since $1\le t\le n-3$, it is enough
to check the endpoints $t=1$ and $t=n-3$. At these endpoints it equals
\[
2(n^2-8)
\qquad\text{and}\qquad
n(n^2-2n-4),
\]
respectively, both positive for $n\ge 4$. Hence
\[
\frac{\lambda_2(G)}{\lambda_n(G)}\le \frac{2}{n}
\]
also in this case. This completes the proof.
\end{proof}

Equipped with these lemmas, we are ready to prove Theorem~\ref{thm:d-at-most-2}.

\begin{proof}[Proof of Theorem~\ref{thm:d-at-most-2}]
If \(1\le d<2\), then the proof follows immediately from Proposition~\ref{prop:d-less-than-2}. Therefore, it remains only to consider the case \(d=2\).

If $G$ is disconnected, then $\lambda_2(G)=0$, and there is nothing to prove.
Assume from now on that $G$ is connected. Since the average degree of $G$ is
$2$, we have $|E(G)|=|V(G)|=n$, so $G$ is unicyclic.
If $G\cong C_n$, then
\[
\lambda_2(G)=2-2\cos\left(\frac{2\pi}{n}\right)\le \frac{4\pi^2}{n^2},
\]
while Corollary~\ref{cor:unicyclic-lambdan} gives $\lambda_n(G)\ge 3$. Then, since $n\ge 6$, we have
\[
\frac{\lambda_2(G)}{\lambda_n(G)}
\le
\frac{4\pi^2}{3n^2}
<
\frac{9}{4n}.
\]

Thus we may assume that $G$ is not a cycle. Let $C=v_1v_2\cdots v_rv_1$ be the unique cycle of $G$. For each $i\in\{1,\dots,r\}$, let $T_i$ be the component of $G-E(C)$ containing $v_i$, and write \(w_i:=|V(T_i)|\). Since $G$ is not a cycle, we have $\Delta(G)\ge 3$, and Lemma~\ref{lem:lambdan-delta} gives
\begin{equation}\label{equ:Delta}
    \lambda_n(G)\ge \Delta(G)+1\ge 4.
\end{equation}
If $\max_i w_i\ge n/2$, then by Lemma~\ref{lem:large-tree}, we can derive
\[
\frac{\lambda_2(G)}{\lambda_n(G)}\le \frac{2}{n}<\frac{9}{4n},
\]
as desired.
Hence, it suffices to consider the case when $\max_i w_i<n/2$. 
By Lemma~\ref{lem:cyclic-interval}, there exists a cyclic interval
$I\subseteq\{1,\dots,r\}$ such that
\[
\frac n3\le \sum_{i\in I} w_i\le \frac n2.
\]
Set \(S:=\bigcup_{i\in I}V(T_i)\). Since $I$ is proper, the only edges between $S$ and $V(G)\setminus S$ are the two cycle edges at the ends of the interval, so $|\partial_G S|=2$. Applying Lemma~\ref{lem:cut}, we obtain
\[
\lambda_2(G)
\le
\frac{n|\partial_G S|}{|S|(n-|S|)}
\le
\frac{2n}{(n/3)(2n/3)}
=
\frac{9}{n}.
\]
Therefore, using \eqref{equ:Delta} we can derive that
\[
\frac{\lambda_2(G)}{\lambda_n(G)}
\le
\frac{9/n}{4}
=
\frac{9}{4n}.
\]
This completes the proof of Theorem~\ref{thm:d-at-most-2}.
\end{proof}

\begin{remark}\label{rem:unicyclic}
The bound in Theorem~\ref{thm:d-at-most-2} is of the correct order in \(n\) for connected graphs of average degree \(2\), equivalently, for unicyclic graphs. Indeed, let \(S_n^+\) be the graph obtained from the star \(S_n\) by adding an edge between two of its leaves. Then \(S_n^+\) is unicyclic, and a direct computation shows that \(\Spec(L_{S_n^+})=\{0,1^{(n-3)},3,n\}\). Hence \(\lambda_2(S_n^+)/\lambda_n(S_n^+)=1/n\). It would be interesting to determine the optimal upper bound for unicyclic graphs.
\end{remark}

\section{Regular graphs and Ramanujan spectra}\label{sec:regular}
In this section, we prove Theorems~\ref{thm:regular-ratio}--\ref{thm:ramanujan}.
We begin by collecting the necessary tools and notation in the first subsection, and then prove each theorem in the subsequent three subsections.

\subsection{Preparation}\label{sec:preliminaries}
We first introduce the celebrated Courant--Fischer Theorem.
\begin{theorem}[Courant--Fischer Theorem]
    Let $M$ be an $n \times n$ symmetric real matrix with eigenvalues $\lambda_1 \leq \lambda_2 \leq \cdots \leq \lambda_n$.
    Then for every $1 \leq k \leq n$,
    \[
    \lambda_k = \min_{\substack{S \subset \mathbb{R}^n \\ \dim(S) = k}} \max_{\vec{x} \in S} \frac{\vec{x}^\top M \vec{x}}{\vec{x}^\top \vec{x}} = \max_{\substack{T \subset \mathbb{R}^n \\ \dim(T) = n - k + 1}} \min_{\vec{x} \in T} \frac{\vec{x}^\top M \vec{x}}{\vec{x}^\top \vec{x}}.
    \]
\end{theorem}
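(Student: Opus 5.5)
The plan is the standard Rayleigh-quotient proof of Courant--Fischer, built on the spectral theorem. Since $M$ is real symmetric, fix an orthonormal eigenbasis $\vec{u}_1,\dots,\vec{u}_n$ with $M\vec{u}_i=\lambda_i\vec{u}_i$. Writing any $\vec{x}=\sum_i c_i\vec{u}_i$, the Rayleigh quotient becomes
\[
R(\vec{x})=\frac{\vec{x}^\top M\vec{x}}{\vec{x}^\top\vec{x}}=\frac{\sum_i\lambda_i c_i^2}{\sum_i c_i^2}.
\]
This is a weighted average of the $\lambda_i$ with weights $c_i^2$. Consequently, if $\vec{x}$ lies in $\operatorname{span}(\vec{u}_1,\dots,\vec{u}_k)$ then $R(\vec{x})\le\lambda_k$, and if $\vec{x}$ lies in $\operatorname{span}(\vec{u}_k,\dots,\vec{u}_n)$ then $R(\vec{x})\ge\lambda_k$.

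For the min--max formula, I would first exhibit a good subspace. Take $S_0=\operatorname{span}(\vec{u}_1,\dots,\vec{u}_k)$; then $\max_{S_0}R=\lambda_k$, attained at $\vec{u}_k$. So the minimum over $k$-dimensional subspaces is at most $\lambda_k$. For the reverse inequality, let $S$ be any $k$-dimensional subspace and set $T_0=\operatorname{span}(\vec{u}_k,\dots,\vec{u}_n)$, which has dimension $n-k+1$. Since $\dim S+\dim T_0=n+1>n$, the intersection $S\cap T_0$ contains a nonzero $\vec{x}$. That vector satisfies $R(\vec{x})\ge\lambda_k$, so $\max_S R\ge\lambda_k$. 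Together these give the first equality.

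The max--min formula follows the same pattern with roles exchanged. The subspace $T_0$ gives $\min_{T_0}R=\lambda_k$, so the maximum over $(n-k+1)$-dimensional subspaces is at least $\lambda_k$. Conversely, any $(n-k+1)$-dimensional $T$ meets $S_0$ nontrivially by the same dimension count. A nonzero vector in $T\cap S_0$ has $R\le\lambda_k$, so $\min_T R\le\lambda_k$. Alternatively, this second formula is the first one applied to $-M$, whose eigenvalues in increasing order are $-\lambda_n\le\cdots\le-\lambda_1$.

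Throughout, the max and min are taken over nonzero vectors. They are attained because $R$ is scale-invariant and continuous on the compact unit sphere intersected with the subspace. I see no real obstacle here: the only nontrivial ingredients are the spectral theorem and the dimension-count intersection argument. In the paper one would simply cite this classical result rather than reprove it.
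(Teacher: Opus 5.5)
Your proof is correct and complete. The paper gives no proof of this statement; it simply quotes the classical Courant--Fischer theorem. Your argument (an orthonormal eigenbasis, the Rayleigh quotient as a weighted average, and the dimension count forcing $S$ to meet $\operatorname{span}(\vec{u}_k,\dots,\vec{u}_n)$ and $T$ to meet $\operatorname{span}(\vec{u}_1,\dots,\vec{u}_k)$) is the standard textbook proof the paper implicitly relies on, so there is nothing to add.
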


Let \(G\) be a graph and let \(u,v\) be two of its vertices. We write \(\operatorname{dist}_G(u,v)\) for the \emph{distance} between \(u\) and \(v\), defined as the minimum number of edges in a path of \(G\) connecting them. If no such path exists, we set \(\operatorname{dist}_G(u,v)=\infty\).
To analyze the layered neighbourhood structure used in the upcoming proof, we introduce the following definitions.

\begin{definition}
For a graph $G$, a vertex $u\in V(G)$, and an integer $k\ge 0$, we write
\[
B_G(u,k):=\{v\in V(G):\operatorname{dist}_G(u,v)\le k\}
\]
for the ball of radius $k$ centered at $u$.

For any two edges $e=xy$ and $f=uv$ in $G$, we define their distance by
\[
\operatorname{dist}_G(e,f):=
\min\{\operatorname{dist}_G(x,u),\operatorname{dist}_G(x,v),
      \operatorname{dist}_G(y,u),\operatorname{dist}_G(y,v)\}.
\]
\end{definition}

 In what follows, all balls are ordinary graph balls, and all distances between edges are understood in this sense. For a positive integer \(r\), we write \([r]:=\{1,\dots,r\}\). In the proof of Lemma~\ref{lem:regular-compare}, the set $U_i$ denotes the vertices at distance exactly $i$ from $U_0=\{u_0,u_1\}$; similarly for the sets $V_i$. In particular, \(\bigcup_{i=0}^k U_i \subseteq B_G(u_0,k)\cup B_G(u_1,k)\). For notational convenience, we sometimes identify $B_G(u,k)$ with its vertex set.

\subsection{Proof of Theorem~\ref{thm:regular-ratio}}

We now prove Conjecture~\ref{conj:spielman} for regular graphs. The first step is a comparison lemma that simultaneously controls the Rayleigh quotients relevant to $\lambda_2$ and $\lambda_n$.

\begin{lemma}\label{lem:regular-compare}
Let $d\ge 2$ and $k\ge 1$ be integers. Let $G$ be an $n$-vertex connected $d$-regular graph containing two edges $u_{0}u_{1}$ and $v_{0}v_{1}$ at distance at least $2k + 2$. Then
\[
(\sqrt{d - 1} + 1)^2 \cdot \lambda_{2} \leq (\sqrt{d - 1} - 1)^2 \cdot \lambda_n + \frac{4(d - 1)^{3/2}}{k + 1}.
\]
\end{lemma}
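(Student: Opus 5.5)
The plan is to adapt Nilli's test-vector proof of the Alon--Boppana bound so that it controls $\lambda_2$ and $\lambda_n$ at the same time, using two test vectors with the same absolute values. Write $s:=\sqrt{d-1}$, so that $d=s^2+1$. For $0\le i\le k$ let $U_i$ be the set of vertices at distance exactly $i$ from $U_0=\{u_0,u_1\}$, and define $V_i$ analogously from $V_0=\{v_0,v_1\}$. Because $\operatorname{dist}_G(u_0u_1,v_0v_1)\ge 2k+2$, the sets $\bigcup_{i\le k}U_i$ and $\bigcup_{i\le k}V_i$ are disjoint. Moreover, no edge joins them, and no edge leaves $U_k$ or $V_k$ toward the other ball. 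Connectivity guarantees that every $U_i$ and $V_i$ is nonempty.

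Define $\vec f$ by $f=(d-1)^{-i/2}$ on $U_i$, $f=-c\,(d-1)^{-i/2}$ on $V_i$, and $f=0$ elsewhere, with $c>0$ chosen so that $\vec f\perp\vec 1$. Define its counterpart $\vec g$ by $g=(-1)^i f$ on $U_i\cup V_i$ and $g=0$ elsewhere. Then $\vec g^\top\vec g=\vec f^\top\vec f=:N$. By Courant--Fischer, $\lambda_2\le \vec f^\top L\vec f/N$ and $\lambda_n\ge \vec g^\top L\vec g/N$; the bound for $\lambda_n$ needs no orthogonality constraint. The quadratic forms split as
\[
\vec f^\top L\vec f = dN-2(A+W), \qquad \vec g^\top L\vec g = dN-2(-A+W),
\]
where $A$ is the sum of $f_xf_y$ over edges joining consecutive layers, and $W\ge 0$ is the corresponding sum over edges inside a single layer. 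Here $W\ge0$ because $f$ has a constant sign on each ball. Substituting into $(s+1)^2\lambda_2-(s-1)^2\lambda_n$ gives the upper bound
\[
\frac{4sdN-4(s^2+1)A-8sW}{N} = 4s\Bigl(d-\frac{dA}{sN}\Bigr)-\frac{8sW}{N}.
\]
It therefore suffices to show $A+\tfrac{2s}{s^2+1}W\ge sN-\tfrac{(d-1)N}{k+1}$, or a cruder bound that achieves the error $4(d-1)^{3/2}/(k+1)$.

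The key step is this lower bound on the cross-layer weight, and it is where the proof departs from Nilli. Nilli's argument only needs an upper bound on $\vec f^\top L\vec f$, whereas here an edge inside a layer helps $\vec f$ but hurts $\vec g$. The $W$ term is exactly what compensates for this. I would carry out the count layer by layer on one ball, then repeat it on the other ball, multiplied by $c^2$. Each vertex $x\in U_i$ with $i\ge1$ has $d$ neighbours in $U_{i-1}\cup U_i\cup U_{i+1}$, and at least one of them lies in $U_{i-1}$. I would then pair each edge weight $f_xf_y=(d-1)^{-(2i+1)/2}$ between $U_i$ and $U_{i+1}$ against $\tfrac{s}{2}$ times the squared weights $(d-1)^{-i}|U_i|$. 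I would use the inequality $xy\ge$ something akin to $\tfrac12(\alpha x^2+\alpha^{-1}y^2)$ in reverse, via counting $e(U_i,U_{i+1})\ge |U_{i+1}|$ together with $e(U_i,U_{i+1})+2e(U_i)\ge(d-1)|U_i|-e(U_{i-1},U_i)+\dots$. Telescoping should leave a loss only from the boundary layer $U_k$, where the edges to $U_{k+1}$ are not counted. Since the weights decay exactly as $(d-1)^{-i/2}$, each layer contributes about $(d-1)^{-i}|U_i|\cdot$const to $N$, and $|U_i|\ge$ const$\cdot(d-1)^{i}$. Hence the boundary loss is at most a $1/(k+1)$ fraction of $N$.

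I expect the main obstacle to be making this telescoping exact, with constant $4(d-1)^{3/2}$, in the presence of arbitrary intra-layer edges and of vertices with several back-neighbours. If the direct pairing fails, my fallback is the truncated-sum variant: replace the weights by $(d-1)^{-i/2}$ only for the bulk of the layers, or average over truncation radii $1,\dots,k+1$, in the style of Nilli. This converts the boundary loss into an averaged $1/(k+1)$ term. Finally, the bound involves the two balls separately and each ratio is homogeneous in the scaling, so the choice of $c$ does not affect the estimate, and adding the two balls' contributions completes the proof.
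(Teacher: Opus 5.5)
Your test vectors and the Courant--Fischer reduction are the same as the paper's. Your identity $(s+1)^2\vec f^\top L\vec f-(s-1)^2\vec g^\top L\vec g=4sdN-4dA-8sW$ is also correct. The gap is the key estimate: you only sketch it, and the one concrete reason you give for it is false.

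\emph{The target is mis-scaled.} What you actually need is $A+\frac{2s}{d}W\ge sN-\frac{(d-1)^{3/2}}{d(k+1)}N$. Your version, with $\frac{(d-1)N}{k+1}$, only yields the error term $4d(d-1)/(k+1)$.

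\emph{The boundary argument is wrong.} You claim the boundary loss is a $1/(k+1)$ fraction of $N$ because $|U_i|\ge \mathrm{const}\cdot(d-1)^i$. This lower bound fails for general $d$-regular graphs: in the prism over a long cycle, every layer $U_i$ with $1\le i\le k$ has at most $4$ vertices. Even where it holds, combined with the trivial $|U_k|\le 2(d-1)^k$ it gives the estimate only up to a constant factor, not with the stated constant.

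\emph{The pairing step is not valid as written.} A ``reverse AM--GM'' $xy\ge\frac12(\alpha x^2+\alpha^{-1}y^2)$ is not a valid inequality. It holds here only as an equality, because consecutive layer values differ by exactly the factor $1/s$.

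The missing idea is that the cancellation is exact, so nothing needs estimating except the boundary. Write $N_U,A_U,W_U$ for the $U$-ball parts of $N,A,W$, and take $e(U_{-1},U_0):=0$. Sum the degree identity $d|U_i|=e(U_{i-1},U_i)+2e(U_i)+e(U_i,U_{i+1})$ with weights $s^{1-2i}$ over $0\le i\le k$. This gives $sdN_U=dA_U+2sW_U+s(d-1)^{-k}e(U_k,U_{k+1})$, so intra-layer edges and multiple back-neighbours cancel identically.

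The paper sees the same cancellation directly in the form $\sum_{ab\in E}(x_a-x_b)^2$. Both vectors are constant on each layer, so intra-layer edges contribute $0$ to both forms. For $i<k$, an edge between $U_i$ and $U_{i+1}$ contributes $(s-1)^2(d-1)^{-(i+1)}$ to one form and $(s+1)^2(d-1)^{-(i+1)}$ to the other, so it cancels in the $(s+1)^2/(s-1)^2$ combination. Only $4s(d-1)^{-k}e(U_k,U_{k+1})$ survives.

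What closes the argument is an \emph{upper} bound on layer growth, not a lower bound:
\begin{itemize}
\item $e(U_k,U_{k+1})\le (d-1)|U_k|$, since $k\ge1$ and each vertex of $U_k$ has a neighbour in $U_{k-1}$;
\item $|U_{i+1}|\le e(U_i,U_{i+1})\le (d-1)|U_i|$, where for $i=0$ you use that $u_0u_1$ is an edge.
\end{itemize}
Hence $|U_i|(d-1)^{-i}$ is non-increasing, and its last term is at most the average $N_U/(k+1)$. The boundary term is then at most $\frac{4(d-1)^{3/2}}{k+1}N_U$. Adding the $V$-side with weight $c^2$ gives exactly the stated error, and no truncation averaging is needed.
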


\begin{proof}
Let $U_0 = \{u_0, u_1\}$ and $V_0 = \{v_0, v_1\}$. For $0 \le i \le k$, denote by $U_i$ (resp. $V_i$) the set of vertices at distance exactly $i$ from $U_0$ (resp. $V_0$). The distance condition ensures that the collections $\{U_i\}_{i=0}^k$ and $\{V_j\}_{j=0}^k$ are pairwise disjoint, and $e(U_i, V_j)=0$ for all $0\le i,j\le k$. Let $U_{k+1}$ (resp. $V_{k+1}$) be the set of vertices at distance $k+1$ from $U_0$ (resp. $V_0$); these may intersect, but the edges between $U_k$ and $U_{k+1}$ are disjoint from those between $V_k$ and $V_{k+1}$. In particular, a shortest path joining \(U_0\) to \(V_0\) has length at least \(2k+2\), so \(U_i\neq\varnothing\) and \(V_i\neq\varnothing\) for every \(0\le i\le k+1\).

Recall the variational characterizations
\[
\lambda_2 = \min_{\vec{x}\perp\vec{1}}\frac{\vec{x}^{\top}L_G\vec{x}}{\vec{x}^{\top}\vec{x}},\qquad \text{and} \qquad
\lambda_n = \max_{\vec{y}}\frac{\vec{y}^{\top}L_G\vec{y}}{\vec{y}^{\top}\vec{y}}.
\]

We construct two test vectors $\vec{x}$ (for $\lambda_2$) and $\vec{y}$ (for $\lambda_n$) as follows:
\begingroup
\small
\[
\vec{x}(a)=\begin{cases}
(d-1)^{-i/2} & \text{if } a\in U_i,\;0\le i\le k,\\
-\beta\,(d-1)^{-j/2} & \text{if } a\in V_j,\;0\le j\le k,\\
0 & \text{otherwise},
\end{cases}
\qquad
\vec{y}(a)=\begin{cases}
(-1)^i (d-1)^{-i/2} & \text{if } a\in U_i,\;0\le i\le k,\\
-\beta (-1)^j (d-1)^{-j/2} & \text{if } a\in V_j,\;0\le j\le k,\\
0 & \text{otherwise},
\end{cases}
\]
\endgroup
where $\beta$ is chosen so that $\vec{x}\perp\vec{1}$.

Define $t_i = e(U_i, U_{i+1})/|U_i|$ for $0\le i\le k$ (with $U_{k+1}$ as above), and similarly $s_j = e(V_j, V_{j+1})/|V_j|$ for $0\le j\le k$. Because $G$ is $d$-regular, we have $t_i, s_j \le d-1$ for all $i,j$.

Now compute the Rayleigh quotients. For $\vec{x}$ we have
\[
\vec{x}^{\top}L_G\vec{x} = \sum_{\substack{ab\in E(G)}} (\vec{x}(a)-\vec{x}(b))^2.
\]
Edges entirely inside $U_i$ or $V_j$ contribute $0$ because $\vec{x}$ is constant on each $U_i$ and $V_j$. 
For $0\le i\le k-1$, each edge between $U_i$ and $U_{i+1}$ contributes $\bigl((d-1)^{-i/2}-(d-1)^{-(i+1)/2}\bigr)^2$, while for $i=k$, each edge between $U_k$ and $U_{k+1}$ contributes $(d-1)^{-k}$ (this is because $\vec{x}=0$ on $U_{k+1}$). 
The contributions from the $V$-side are analogous, with an extra factor $\beta^2$. 
No edges connect $U$-layers to $V$-layers. Therefore adding everything up, we obtain
$\vec{x}^{\top}L_G\vec{x} = X_0 + \beta^2 X_1,$
where
\[
\begin{aligned}
X_0 &= \sum_{i=0}^{k-1} t_i |U_i|\bigl((d-1)^{-i/2}-(d-1)^{-(i+1)/2}\bigr)^2 + t_k |U_k| (d-1)^{-k},\\
X_1 &= \sum_{j=0}^{k-1} s_j |V_j|\bigl((d-1)^{-j/2}-(d-1)^{-(j+1)/2}\bigr)^2 + s_k |V_k| (d-1)^{-k}.
\end{aligned}
\]
Similarly, for an edge $ab$ between $U_i$ and $U_{i+1}$, the contribution to $\vec y^{\top}L_G \vec y$ is
$$\bigl((-1)^i(d-1)^{-i/2}-(-1)^{i+1}(d-1)^{-(i+1)/2}\bigr)^2=\bigl((d-1)^{-i/2}+(d-1)^{-(i+1)/2}\bigr)^2.$$
The $V$-side behaves analogously, so
$\vec{y}^{\top}L_G\vec{y} = Y_0 + \beta^2 Y_1$,
with
\[
\begin{aligned}
Y_0 &= \sum_{i=0}^{k-1} t_i |U_i|\bigl((d-1)^{-i/2}+(d-1)^{-(i+1)/2}\bigr)^2 + t_k |U_k| (d-1)^{-k},\\
Y_1 &= \sum_{j=0}^{k-1} s_j |V_j|\bigl((d-1)^{-j/2}+(d-1)^{-(j+1)/2}\bigr)^2 + s_k |V_k| (d-1)^{-k}.
\end{aligned}
\]

Set $D_+ = \sqrt{d-1}+1$, $D_- = \sqrt{d-1}-1$. A direct computation shows
\begin{align*}
X_0 &= \sum_{i=0}^{k-1} \frac{t_i |U_i|}{(d-1)^{i+1}} D_-^2 + \frac{t_k |U_k|}{(d-1)^k}, &
Y_0 &= \sum_{i=0}^{k-1} \frac{t_i |U_i|}{(d-1)^{i+1}} D_+^2 + \frac{t_k |U_k|}{(d-1)^k},\\
X_1 &= \sum_{j=0}^{k-1} \frac{s_j |V_j|}{(d-1)^{j+1}} D_-^2 + \frac{s_k |V_k|}{(d-1)^k}, &
Y_1 &= \sum_{j=0}^{k-1} \frac{s_j |V_j|}{(d-1)^{j+1}} D_+^2 + \frac{s_k |V_k|}{(d-1)^k}.
\end{align*}
Moreover, we have
$\vec{x}^{\top}\vec{x} = Z_0 + \beta^2 Z_1$ and
$\vec{y}^{\top}\vec{y} = Z_0 + \beta^2 Z_1$,
where 
\[
Z_0 = \sum_{i=0}^k |U_i|(d-1)^{-i} \qquad \text{and} \qquad  Z_1 = \sum_{j=0}^k |V_j|(d-1)^{-j}.
\]
Since $\vec{x}\perp\vec{1}$,
by the Courant--Fischer theorem, we have
\[
\lambda_2 \le \varphi(\vec{x}) \coloneqq \frac{X_0 + \beta^2 X_1}{Z_0 + \beta^2 Z_1}, \qquad \text{and} \qquad
\lambda_n \ge \varphi(\vec{y}) \coloneqq \frac{Y_0 + \beta^2 Y_1}{Z_0 + \beta^2 Z_1}.
\]

The following two claims establish key inequalities among these parameters defined above.

\begin{claim}\label{cl:U}
\(
D_+^2 X_0 - D_-^2 Y_0 \le \frac{4(d-1)^{3/2}}{k+1}\,Z_0.
\)
\end{claim}
\begin{proof}
From the expressions above, all terms with \(0 \le i \le k-1\) cancel with the symmetric expression \(D_+^2 X_0 - D_-^2 Y_0\).
Only the boundary terms remain:
\[
D_+^2 X_0 - D_-^2 Y_0 = D_+^2\cdot\frac{t_k|U_k|}{(d-1)^k} - D_-^2\cdot\frac{t_k|U_k|}{(d-1)^k}
= \frac{t_k|U_k|}{(d-1)^k}\,(D_+^2 - D_-^2)= \frac{t_k|U_k|}{(d-1)^k}\,4\sqrt{d-1}.
\]
Since $G$ is $d$-regular, we have $t_k \le d-1$. Moreover, every vertex of $U_{i+1}$ has at least one neighbour in $U_i$, so $|U_{i+1}| \le e(U_i,U_{i+1}).$
On the other hand, each vertex of $U_i$ has at most $d-1$ neighbours in $U_{i+1}$: for $i\ge 1$ this is because it already has a neighbour in $U_{i-1}$, while for $i=0$ each of $u_0,u_1$ has exactly one neighbour inside $U_0$. Hence
$e(U_i,U_{i+1}) \le (d-1)|U_i|
$ for $0\le i\le k-1$.

Therefore
$|U_{i+1}| \le (d-1)|U_i|
$ for $0\le i\le k-1,$
so the sequence $\frac{|U_i|}{(d-1)^i}$ is non-increasing. Therefore its minimum at $i=k$ is at most the average:
\[
\frac{|U_k|}{(d-1)^k} \le \frac{1}{k+1}\sum_{i=0}^k \frac{|U_i|}{(d-1)^i} = \frac{Z_0}{k+1}.
\]
Consequently,
$D_+^2 X_0 - D_-^2 Y_0 \le 4\sqrt{d-1}\,(d-1)\,\frac{Z_0}{k+1}
= \frac{4(d-1)^{3/2}}{k+1}\,Z_0,$
which proves the claim.
\end{proof}

The same argument applied to the $V$-side yields the following analogous bound, where the proof is omitted.
\begin{claim}\label{cl:V}
\(
D_+^2 X_1 - D_-^2 Y_1 \le \frac{4(d-1)^{3/2}}{k+1}\,Z_1.
\)
\end{claim}

Multiplying Claim~\ref{cl:U} by $1$ and Claim~\ref{cl:V} by $\beta^2$, then adding, we obtain
\[
D_+^2 (X_0+\beta^2 X_1) - D_-^2 (Y_0+\beta^2 Y_1)
\le \frac{4(d-1)^{3/2}}{k+1}\,(Z_0+\beta^2 Z_1).
\]
Further dividing by $Z_0+\beta^2 Z_1$ gives
$$D_+^2\,\varphi(\vec{x}) - D_-^2\,\varphi(\vec{y}) \le \frac{4(d-1)^{3/2}}{k+1}, \quad \text{or equivalently} \quad D_-^2\,\varphi(\vec{y}) + \frac{4(d-1)^{3/2}}{k+1} \ge D_+^2\,\varphi(\vec{x}).$$
Finally, using $\lambda_2 \le \varphi(\vec{x})$ and $\lambda_n \ge \varphi(\vec{y})$,  we conclude that
\[
D_-^2\,\lambda_n + \frac{4(d-1)^{3/2}}{k+1}
\ge D_-^2\,\varphi(\vec{y}) + \frac{4(d-1)^{3/2}}{k+1}
\ge D_+^2\,\varphi(\vec{x})
\ge D_+^2\,\lambda_2,
\]
which is exactly the desired inequality for Lemma~\ref{lem:regular-compare}. 
\end{proof}

Now we can rapidly prove Theorem~\ref{thm:regular-ratio}.

\begin{proof}[Proof of Theorem~\ref{thm:regular-ratio}]
If \(G\) is disconnected, then \(\lambda_2(G)=0\), and the desired inequality is trivial. Hence we may assume that \(G\) is connected. Let
    $D_+=\sqrt{d-1}+1$, and $D_-=\sqrt{d-1}-1$.
    By Lemma~\ref{lem:regular-compare}, we have 
    \[
    D_+^2\,\lambda_2 \le D_-^2\,\lambda_n + \frac{4(d-1)^{3/2}}{k+1}.
    \]
    Dividing both sides by $\lambda_n D_+^2$ and using the inequalities $\lambda_n \ge d > d-1$ and $D_+^2 > d-1$, we obtain
    \[
    \frac{\lambda_2}{\lambda_n}
    \le \frac{D_-^2}{D_+^2} + \frac{4(d-1)^{3/2}}{(k+1)\,\lambda_n D_+^2}
    < \frac{(\sqrt{d-1}-1)^2}{(\sqrt{d-1}+1)^2} + \frac{4}{(k+1)\sqrt{d-1}},
    \]
    as required.
\end{proof}

\subsection{Proof of Theorem~\ref{thm:higher-eigs}}
We prove parts \textup{(a)} and \textup{(b)} separately.
\smallskip

    \noindent\textbf{(a)} Since $d\ge 3$, we have $D_+ = \sqrt{d-1}+1 > 0$ and $D_- = \sqrt{d-1}-1 > 0$. Let \(G\) be an \(n\)-vertex \(d\)-regular graph containing \(4s-2\) edges that are pairwise at distance at least \(2k+2\). For \(1 \le i \le 2s-1\), let \(e_i = u_{i0}u_{i1}\) and \(f_i = v_{i0}v_{i1}\) be these edges.
    Define the vertex sets
    \[
    Q_i = B_G(u_{i0},k) \cup B_G(u_{i1},k),\qquad
S_i = B_G(v_{i0},k) \cup B_G(v_{i1},k) \qquad \text{for} \quad 1\le i\le 2s-1,
    \]
    which are pairwise disjoint by construction. 
    If $x\in Q_i\cup S_i$ and $y\in Q_{i'}\cup S_{i'}$ with $i\neq i'$, then
    $\operatorname{dist}_G(x,y)\ge 2k+2-k-k=2,$
    so there is no edge between $Q_i\cup S_i$ and $Q_{i'}\cup S_{i'}$.
    For each $1\leq i\leq 2s-1$, let $U_{i0}=\{u_{i0},u_{i1}\}$ and $V_{i0}=\{v_{i0},v_{i1}\}$.
    Denote by $U_{ir}$ (resp. $V_{ij}$) the set of vertices at distance exactly $r$ from $U_{i0}$ (resp. $j$ from $V_{i0}$).
Note that $U_{ir} \subseteq Q_i$, $V_{ij} \subseteq S_i$ are disjoint.
    Then the Courant--Fischer theorem gives, for every $s\ge 1$,
    $$\lambda_{s+1}=\min_{\dim(S)=s+1}\max_{\vec{x}\in S}\frac{\vec{x}^{\top}L_G\vec{x}}{\vec{x}^{\top}\vec{x}}=\min_{\substack{\dim(S)=s\\ S\perp\vec{1}}}\max_{\vec{x}\in S}\frac{\vec{x}^{\top}L_G\vec{x}}{\vec{x}^{\top}\vec{x}}, \qquad \lambda_{n-s+1}=\max_{\dim(T)=s}\min_{\vec{y}\in T}\frac{\vec{y}^{\top}L_G\vec{y}}{\vec{y}^{\top}\vec{y}},$$
    implying that
   \[
\lambda_{s+1}\le\max_{\vec{x}\in S}\frac{\vec{x}^{\top}L_G\vec{x}}{\vec{x}^{\top}\vec{x}}\quad(\forall S\perp\vec{1},\ \dim(S)=s),\quad \text{and} \quad 
\lambda_{n-s+1}\ge\min_{\vec{y}\in T}\frac{\vec{y}^{\top}L_G\vec{y}}{\vec{y}^{\top}\vec{y}}\quad(\forall T,\ \dim(T)=s).
\]
    For each $t$ we define test vectors $\vec{x}_t$ (for $\lambda_{s+1}$) and $\vec{y}_t$ (for $\lambda_{n-s+1}$) as follows:
    \begingroup
\small
\[
\vec{x}_t(a)=\begin{cases}
(d-1)^{-i/2} & \text{if } a\in U_{ti},\; 0\le i\le k,\\
-\beta_t\,(d-1)^{-j/2} & \text{if } a\in V_{tj},\; 0\le j\le k,\\
0 & \text{otherwise},
\end{cases}
\qquad
\vec{y}_t(a)=\begin{cases}
(-1)^i (d-1)^{-i/2} & \text{if } a\in U_{ti},\; 0\le i\le k,\\
-\beta_t (-1)^j (d-1)^{-j/2} & \text{if } a\in V_{tj},\; 0\le j\le k,\\
0 & \text{otherwise},
\end{cases}
\]
\endgroup
where \(\beta_t\) is chosen so that \(\vec{x}_t\perp\vec{1}\). For each \(t\), the vectors \(\vec{x}_t\) and \(\vec{y}_t\) have the same support. For distinct indices \(t\neq t'\), these supports are disjoint, and moreover there is no edge between the support corresponding to \(t\) and the support corresponding to \(t'\).

    Repeating the computation of Lemma~\ref{lem:regular-compare} yields, for every $t$,
 \begin{equation}\label{eq:pair-bound}
        D_-^2\,\varphi(\vec{y}_t)+\frac{4(d-1)^{3/2}}{k+1}\ge D_+^2\,\varphi(\vec{x}_t),
        \qquad\text{with } \quad\varphi(\vec{z}) = \frac{\vec{z}^{\top}L_G\vec{z}}{\vec{z}^{\top}\vec{z}} .
\end{equation}

    Let $L,M\subseteq[2s-1]$ be index sets of size $s-1$ such that
    \[
    \varphi(\vec{x}_\ell)\le\varphi(\vec{x}_{\ell'})\text{ for all }\ell\in L,\;\ell'\notin L,\qquad
    \varphi(\vec{y}_m)\ge\varphi(\vec{y}_{m'})\text{ for all }m\in M,\;m'\notin M .
    \]
    Since $2s-1 > 2(s-1)$, the set $N=[2s-1]\setminus(L\cup M)$ is nonempty. Pick $w\in N$ and define
    \[
     S=\operatorname{span}\bigl(\{\vec{x}_w\}\cup\{\vec{x}_\ell:\ell\in L\}\bigr),\qquad
    T=\operatorname{span}\bigl(\{\vec{y}_w\}\cup\{\vec{y}_m:m\in M\}\bigr).
    \]
   Since there are no edges between distinct supports, every nonzero vector
   $ \vec{x}=\beta\vec{x}_w+\sum_{\ell\in L}\alpha_\ell\vec{x}_\ell\in S$
    satisfies
    \[
    \varphi(\vec{x})=
    \frac{
    \beta^2\|\vec{x}_w\|^2\varphi(\vec{x}_w)
    +
    \sum_{\ell\in L}\alpha_\ell^2\|\vec{x}_\ell\|^2\varphi(\vec{x}_\ell)
    }{
    \beta^2\|\vec{x}_w\|^2+\sum_{\ell\in L}\alpha_\ell^2\|\vec{x}_\ell\|^2}
    \le \varphi(\vec{x}_w).
    \]
    Since $w\notin L$, we have $\varphi(\vec{x}_\ell)\le \varphi(\vec{x}_w)$ for any $\ell\in L$, and therefore $\max_{\vec{x}\in S}\varphi(\vec{x}) = \varphi(\vec{x}_w).$

    Similarly, every nonzero vector $\vec{y}=\gamma\vec{y}_w+\sum_{m\in M}\delta_m\vec{y}_m\in T$ satisfies
    \[
    \varphi(\vec{y})=
    \frac{
    \gamma^2\|\vec{y}_w\|^2\varphi(\vec{y}_w)
    +
    \sum_{m\in M}\delta_m^2\|\vec{y}_m\|^2\varphi(\vec{y}_m)
    }{
    \gamma^2\|\vec{y}_w\|^2+\sum_{m\in M}\delta_m^2\|\vec{y}_m\|^2
    }\ge \varphi(\vec{y}_w) .
    \]
    Since $w\notin M$, we have $\varphi(\vec{y}_w)\le \varphi(\vec{y}_m)$ for any $m\in M$, and therefore $\min_{\vec{y}\in T}\varphi(\vec{y}) = \varphi(\vec{y}_w)$.

    Now $S\perp\vec{1}$ and $\dim(S)=s$, while $\dim(T)=s$. Applying the Courant--Fischer characterizations together with~\eqref{eq:pair-bound} gives
\[
D_+^2\lambda_{s+1} - D_-^2\lambda_{n-s+1}
\le D_+^2\max_{\vec{x}\in S}\varphi(\vec{x}) - D_-^2\min_{\vec{y}\in T}\varphi(\vec{y})
= D_+^2\varphi(\vec{x}_w) - D_-^2\varphi(\vec{y}_w)
\le \frac{4(d-1)^{3/2}}{k+1}.
\]
This is exactly statement (a).

    \medskip

\noindent\textbf{(b)} Throughout, set
\[
M_+=\sqrt{d-1}+2,\qquad M_-=\sqrt{d-1}-2,\qquad \alpha=\frac{\pi}{2k}.
\]
Let $v_1,\dots,v_{2s+1}$ be vertices of $G$ whose pairwise distances are at least $4k$.
For each $\ell\in\{1,\dots,2s+1\}$ and $0\le i\le 2k$, let $V_{\ell i}$ be the set of vertices at distance exactly $i$ from $v_\ell$, and define
\[
x_i:=\frac{\sin(i\alpha)}{(d-1)^{i/2}}\qquad(0\le i\le 2k).
\]
We claim $x_0=x_{2k}=0$, and $x_1\ge x_2\ge \cdots \ge x_{2k-1}>0$. We have 
\[
x_{i-1}+(d-1)x_{i+1}=2\sqrt{d-1}\cos\alpha\,x_i
\qquad(1\le i\le 2k-1).
\]
Also, by the identity \(\sin((i+1)\alpha)=2\cos\alpha\cdot\sin(i\alpha)-\sin((i-1)\alpha)\), and since \(\sin((i-1)\alpha)\ge 0\) for \(1\le i\le 2k-1\), we obtain \(\frac{\sin((i+1)\alpha)}{\sin(i\alpha)}\le 2\cos\alpha\). Hence,
\[
\frac{x_{i+1}}{x_i}
=\frac{\sin((i+1)\alpha)}{\sqrt{d-1}\sin(i\alpha)}
\le \frac{2\cos\alpha}{\sqrt{d-1}}
\le \frac{2}{\sqrt{d-1}}
\qquad(1\le i\le 2k-1).
\]
Since \(d\ge 5\), this proves \(x_1\ge x_2\ge\cdots\ge x_{2k-1}>0\).
Define vectors $\vec{x}_\ell,\vec{y}_\ell\in\mathbb{R}^{V(G)}$ for $\ell \in \{ 1,\dots, 2s+1\}$ by
\[
\vec{x}_\ell(v)=
\begin{cases}
x_i, & v\in V_{\ell i}\ \text{for some }0\le i\le 2k,\\
0,   & \text{otherwise},
\end{cases}
\qquad
\vec{y}_\ell(v)=
\begin{cases}
(-1)^i x_i, & v\in V_{\ell i}\ \text{for some }0\le i\le 2k,\\
0,          & \text{otherwise}.
\end{cases}
\]
Since $x_0=x_{2k}=0$, the supports of both $\vec{x}_\ell$ and $\vec{y}_\ell$ are contained in
$\bigcup_{i=1}^{2k-1}V_{\ell i}$.
If $u\in V_{\ell i}$ and $v\in V_{m j}$ with $\ell\ne m$ and $1\le i,j\le 2k-1$, then
\[
\operatorname{dist}_G(u,v)\ge \operatorname{dist}_G(v_\ell,v_m)-i-j
\ge 4k-(2k-1)-(2k-1)=2.
\]
Hence the supports corresponding to different indices are pairwise disjoint, and there are no edges between distinct supports. In particular,
$\|\vec{x}_\ell\|^2=\|\vec{y}_\ell\|^2=\sum_{i=1}^{2k-1}|V_{\ell i}|\,x_i^2$.
For $1\le i\le 2k-1$, write
\[
n_{\ell i}:=|V_{\ell i}|,\qquad
a_{\ell i}:=\widehat{e}(V_{\ell(i-1)},V_{\ell i}),\qquad
b_{\ell i}:=\widehat{e}(V_{\ell i},V_{\ell i}),\qquad
c_{\ell i}:=\widehat{e}(V_{\ell i},V_{\ell(i+1)}),
\]
where $\widehat{e}(X,Y)$ denotes the number of ordered pairs $(u,v)$ with $u\in X$, $v\in Y$, and $uv\in E(G)$.
Then, we have
\[
a_{\ell i}+b_{\ell i}+c_{\ell i}=d\,n_{\ell i},
\quad \text{and} \quad
a_{\ell i}\ge n_{\ell i},
\]
because $G$ is $d$-regular and every vertex of $V_{\ell i}$ has at least one neighbour in $V_{\ell(i-1)}$.

We claim that for every $\ell \in \{1, \dots ,2s+1 \}$,
\begin{equation}\label{eq:claim-thm2b}
M_+\vec{x}_\ell^{\top}A_G\vec{x}_\ell
-
M_-\vec{y}_\ell^{\top}A_G\vec{y}_\ell
\ge
4(d-1)\cos\alpha\,\|\vec{x}_\ell\|^2.
\end{equation}
Indeed, a direct calculation gives
\begin{equation}\label{eq:adj-bound-local}
M_+\vec{x}_\ell^{\top}A_G\vec{x}_\ell - M_-\vec{y}_\ell^{\top}A_G\vec{y}_\ell = \sum_{i=1}^{2k-1} \bigl(2\sqrt{d-1}\,a_{\ell i}x_{i-1} + 4\,b_{\ell i}x_i + 2\sqrt{d-1}\,c_{\ell i}x_{i+1}\bigr)x_i.
\end{equation}
For $2\le i\le 2k-2$, the ratio bound above gives
$\frac{x_i}{x_{i-1}}\le \frac{2}{\sqrt{d-1}}$ and $\frac{x_i}{x_{i+1}} \ge \frac{\sqrt{d-1}}{2}$,
and hence
\[
2\sqrt{d-1}\,x_{i-1}x_i \ge \frac{8}{\sqrt{d-1}}\,x_{i-1}x_i \ge 4x_i^2 \ge 2\sqrt{d-1}\,x_i x_{i+1},
\]
where the first inequality uses $d\ge5$. Therefore, for $2\le i\le 2k-2$, under the constraints
$a_{\ell i}\ge n_{\ell i}$ and $a_{\ell i}+b_{\ell i}+c_{\ell i}=d\,n_{\ell i}$,
the $i$th summand is minimized when
\[
a_{\ell i}=n_{\ell i},\qquad b_{\ell i}=0,\quad \text{and} \quad c_{\ell i}=(d-1)n_{\ell i}.
\]

If \(k\ge 2\), then for \(i=2k-1\) we have \(x_{2k}=0\), and the ratio bound for \(i=2k-2\) gives
\[
2\sqrt{d-1}\,x_{2k-2}x_{2k-1}\ge 4x_{2k-1}^2 \ge 0
=2\sqrt{d-1}\,x_{2k-1}x_{2k}.
\]
Hence the same minimizing choice
\[
a_{\ell,2k-1}=n_{\ell,2k-1},\qquad b_{\ell,2k-1}=0,\qquad c_{\ell,2k-1}=(d-1)n_{\ell,2k-1}
\]
is again valid.

For $i=1$, we use the extra fact that $V_{\ell0}=\{v_\ell\}$, so every vertex of $V_{\ell1}$ has exactly one neighbour in $V_{\ell0}$; hence $a_{\ell1}=n_{\ell1}$. Thus $b_{\ell1}+c_{\ell1}=(d-1)n_{\ell1}$. Since $2\sqrt{d-1}\,x_1x_2\le 4x_1^2$, the $i=1$ summand is minimized when $b_{\ell1}=0$, $c_{\ell1}=(d-1)n_{\ell1}$.

Combining these cases (when \(k=1\), only the \(i=1\) case is present), by \eqref{eq:adj-bound-local} we obtain
\[
\begin{aligned}
M_+\vec{x}_\ell^{\top}A_G\vec{x}_\ell - M_-\vec{y}_\ell^{\top}A_G\vec{y}_\ell
&\ge \sum_{i=1}^{2k-1} 2\sqrt{d-1}\,n_{\ell i}\bigl(x_{i-1}+(d-1)x_{i+1}\bigr)x_i \\
&= 4(d-1)\cos\alpha\sum_{i=1}^{2k-1}n_{\ell i}x_i^2 = 4(d-1)\cos\alpha\,\|\vec{x}_\ell\|^2,
\end{aligned}
\]
which proves \eqref{eq:claim-thm2b}.
Now for $z\neq 0$, set
\[
\psi(\vec{z}):=\frac{\vec{z}^{\top}A_G\vec{z}}{\vec{z}^{\top}\vec{z}}.
\]
Choose $X\subseteq\{1,\dots,2s+1\}$ with $|X|=s$ so that $\{\psi(\vec{x}_\ell):\ell\in X\}$ consists of the $s$ largest values among $\psi(\vec{x}_1),\dots,\psi(\vec{x}_{2s+1})$.
Choose $Y\subseteq\{1,\dots,2s+1\}$ with $|Y|=s-1$ so that $\{\psi(\vec{y}_\ell):\ell\in Y\}$ consists of the $s-1$ smallest values among $\psi(\vec{y}_1),\dots,\psi(\vec{y}_{2s+1})$.
Since $|X|+|Y|=2s-1<2s+1$, there exists $w\in\{1,\dots,2s+1\}\setminus(X\cup Y)$.
Define
\[
U=\operatorname{span}\bigl(\{\vec{x}_w\}\cup\{\vec{x}_\ell:\ell\in X\}\bigr),
\qquad
V=\operatorname{span}\bigl(\{\vec{y}_w\}\cup\{\vec{y}_\ell:\ell\in Y\}\bigr).
\]
Because the corresponding supports of $\vec{x}_\ell$ (and of $\vec{y}_\ell $) are pairwise disjoint, and there is no edge between distinct supports, every nonzero vector
$u=\beta\vec{x}_w+\sum_{\ell\in X}\alpha_\ell\vec{x}_\ell\in U$
satisfies
\[
\psi(u)=
\frac{
\beta^2\|\vec{x}_w\|^2\psi(\vec{x}_w)
+
\sum_{\ell\in X}\alpha_\ell^2\|\vec{x}_\ell\|^2\psi(\vec{x}_\ell)
}{
\beta^2\|\vec{x}_w\|^2+\sum_{\ell\in X}\alpha_\ell^2\|\vec{x}_\ell\|^2
},
\]
Since $w\notin X$, we have $\psi(\vec{x}_w)\le \psi(\vec{x}_\ell)$ for every $\ell\in X$. Therefore $\min_{u\in U\setminus\{0\}}\psi(u)=\psi(\vec{x}_w)$. Similarly, since $w\notin Y$, we have $\psi(\vec{y}_w)\ge \psi(\vec{y}_\ell)$ for all $\ell\in Y$; hence $\max_{v\in V\setminus\{0\}}\psi(v)=\psi(\vec{y}_w)$.

Because the adjacency eigenvalues are ordered non-increasingly, Courant--Fischer takes the form
\[
\mu_{s+1}=\max_{\dim U=s+1}\min_{u\in U\setminus\{0\}}\psi(u),\qquad
\mu_{n-s+1}=\min_{\dim V=s}\max_{v\in V\setminus\{0\}}\psi(v),
\]
equivalently by applying the usual theorem to $-A_G$. Since $\dim U=s+1$ and $\dim V=s$, this gives
\[
\mu_{s+1}\ge \min_{u\in U\setminus\{0\}}\psi(u)=\psi(\vec{x}_w),
\qquad
\mu_{n-s+1}\le \max_{v\in V\setminus\{0\}}\psi(v)=\psi(\vec{y}_w).
\]
Applying \eqref{eq:claim-thm2b} with $\ell=w$, we obtain
\[
M_+\mu_{s+1}-M_-\mu_{n-s+1}
\ge
M_+\psi(\vec{x}_w)-M_-\psi(\vec{y}_w)
\ge
4(d-1)\cos\alpha.
\]
This completes the proof of Theorem~\ref{thm:higher-eigs}.
\qed

\subsection{Proof of Theorem~\ref{thm:ramanujan}}
It suffices to consider $0<\varepsilon\le 1$.
Throughout this proof, fix $d\ge 3$ and set
\[
\eta:=\frac{\varepsilon}{2}.
\]
We begin by proving the theorem for all \(d\ge 3\) via
Theorem~\ref{thm:higher-eigs}\textup{(a)}. After that, assuming \(d\ge 6\), we use Theorem~\ref{thm:higher-eigs}\textup{(b)} to obtain a sharper quantitative lower bound on the constant \(\beta\).

\bigskip

\noindent\underline{\textbf{Negative eigenvalues for $d\ge 3$.}}
Define
\(M_d:=\frac{4(d-1)^{3/2}}{d-2\sqrt{d-1}}\)
and choose
$k:=\left\lceil M_d\,\eta^{-1}\right\rceil $.
Then
\begin{equation}\label{eq:eta-md-k}
\frac{4(d-1)^{3/2}}{(k+1)(d-2\sqrt{d-1})} \le \frac{M_d}{k} \le \eta.
\end{equation}
Let $\mathcal E$ be a maximal set of edges with pairwise distance at least $2k+2$.
For $e=uv\in\mathcal E$, let $T(e)$ be the set of edges with distance at most $2k+2$ from $e$.
Each edge in $T(e)$ has an endpoint in $B_G(u,2k+2)\cup B_G(v,2k+2)$, so we have
\[
|T(e)|
\le
2d\left(1+d\sum_{i=0}^{2k+1}(d-1)^i\right)
\le\frac{2d^2}{d-2}\cdot (d-1)^{2k+2}
\]
Because $\mathcal E$ is maximal, the family $\{T(e):e\in\mathcal E\}$ covers $E(G)$.
Therefore
\[
\frac{nd}{2}=|E(G)|\le |\mathcal E|\,\frac{2d^2}{d-2} (d-1)^{2k+2},
\]
and hence
\[
|\mathcal E|
\ge
\frac{n(d-2)}{4d}(d-1)^{-(2k+2)}.
\]

Set
$s:=\left\lfloor \frac{|\mathcal E|+2}{4}\right\rfloor$.
Then $4s-2\le |\mathcal E|$, so we may choose $4s-2$ edges from $\mathcal E$
and apply Theorem~\ref{thm:higher-eigs}(a) to them. Also,
\[
s\ge \frac{|\mathcal E|}{4}-\frac12
\ge \frac{n(d-2)}{16d}(d-1)^{-(2k+2)}-\frac12.
\]
If
\[
n\ge n_1(d,\eta):=\left\lceil \frac{16d}{d-2}(d-1)^{2k+2}\right\rceil,
\]
then
\[
\frac{n(d-2)}{16d}(d-1)^{-(2k+2)}\ge 1, \qquad \text{and therefore} \qquad s\ge \frac{n(d-2)}{32d}(d-1)^{-(2k+2)}.
\]

Since $G$ is Ramanujan and $s\ge 1$, we have
\(\mu_{s+1}\le \mu_2\le 2\sqrt{d-1}\),
hence
\(\lambda_{s+1}=d-\mu_{s+1}\ge d-2\sqrt{d-1}.\)
Theorem~\ref{thm:higher-eigs}(a) gives
\[
(d-2\sqrt{d-1})\lambda_{n-s+1}
\ge(d+2\sqrt{d-1})\lambda_{s+1}-\frac{4(d-1)^{3/2}}{k+1}
\ge (d+2\sqrt{d-1})(d-2\sqrt{d-1})-\frac{4(d-1)^{3/2}}{k+1}.
\]
It follows from~\eqref{eq:eta-md-k} that
\[
\lambda_{n-s+1}
\ge
d+2\sqrt{d-1}-\frac{4(d-1)^{3/2}}{(k+1)(d-2\sqrt{d-1})}
\ge d+2\sqrt{d-1}-\eta.
\]
Therefore
\[
\mu_{n-s+1}=d-\lambda_{n-s+1}\le -2\sqrt{d-1}+\eta
< -2\sqrt{d-1}+\varepsilon.
\]
So $G$ has at least $s$ adjacency eigenvalues smaller than
$-2\sqrt{d-1}+\varepsilon$.
Since $0<\varepsilon\le 1$, we have $\eta^{-1}\le 2\varepsilon^{-1}$ and $\eta^{-1}\ge 1$.
Hence
\[
k=\left\lceil M_d\,\eta^{-1}\right\rceil
\le M_d\,\eta^{-1}+1
\le (2M_d+1)\varepsilon^{-1}.
\]
Define
\[
c_1(d,\eta):=\frac{d-2}{32d}(d-1)^{-(2k+2)}.
\]
Then every $n$-vertex $d$-regular Ramanujan graph with $n\ge n_1(d,\eta)$ has at least
$c_1(d,\eta)\cdot n$ adjacency eigenvalues smaller than
$-2\sqrt{d-1}+\varepsilon$.
After absorbing constant factors depending only on $d$, we obtain
\[
c_1(d,\eta)\ge {c_d}^{-\frac{1}{\varepsilon}}
\]
for some constant $c_d>1$ depending only on $d$.

\bigskip

\noindent\underline{\textbf{Improved bound for negative eigenvalues when $d\ge 6$.}}
Now assume $d\ge 6$ and define
\[
N_d:=\pi\sqrt{\frac{d-1}{2(\sqrt{d-1}-2)}}.
\]
Choose $k = \left\lceil \frac{N_d}{\sqrt{\eta}}\right\rceil$.
Using again $1-\cos t\le t^2/2$, we get
\begin{equation}\label{eq:eta-choice}
\frac{4(d-1)}{\sqrt{d-1}-2}\Bigl(1-\cos\frac{\pi}{2k}\Bigr)
\le
\frac{4(d-1)}{\sqrt{d-1}-2}\cdot \frac12\Bigl(\frac{\pi}{2k}\Bigr)^2
=\frac{N_d^2}{k^2}\le \eta.
\end{equation}

Let $\mathcal V$ be a maximal set of vertices whose pairwise distances are at least $4k$.
Because $\mathcal V$ is maximal, the balls $B_G(v,4k)$ for all $v\in\mathcal V$, cover $V(G)$.
Also,
\[
|B_G(v,4k)|
\le
1+d\sum_{i=0}^{4k-1}(d-1)^i
\le\frac{d}{d-2}\cdot (d-1)^{4k}.
\]
Hence
\[
|\mathcal V|
\ge
\frac{n(d-2)}{d(d-1)^{4k}}.
\]

Set
$s:=\left\lfloor \frac{|\mathcal V|-1}{2}\right\rfloor$ .
Then $2s+1\le |\mathcal V|$, so we may choose $2s+1$ vertices from $\mathcal V$
and apply Theorem~\ref{thm:higher-eigs}(b) to them. Also,
\[
s\ge \frac{|\mathcal V|-2}{2}
\ge \frac{n(d-2)}{2d}(d-1)^{-4k}-1.
\]
If
\[
n\ge n_2(d,\eta):=\left\lceil \frac{4d}{d-2}(d-1)^{4k}\right\rceil,
\]
then
\[
\frac{n(d-2)}{2d}(d-1)^{-4k}\ge 2,
\quad \text{and hence} \quad
s\ge \frac{n(d-2)}{4d}(d-1)^{-4k}.
\]
Since $G$ is Ramanujan and $s\ge 1$, $\mu_{s+1}\le \mu_2\le 2\sqrt{d-1}$.
Applying Theorem~\ref{thm:higher-eigs}(b), and by \eqref{eq:eta-choice} we obtain
\[
\begin{aligned}
\mu_{n-s+1}
&\le
\frac{(\sqrt{d-1}+2)\mu_{s+1}-4(d-1)\cos(\pi/(2k))}{\sqrt{d-1}-2} \\
&\le
-2\sqrt{d-1}
+
\frac{4(d-1)}{\sqrt{d-1}-2}\Bigl(1-\cos\frac{\pi}{2k}\Bigr)
\le
-2\sqrt{d-1}+\eta
<
-2\sqrt{d-1}+\varepsilon.
\end{aligned}
\]
Thus $G$ has at least $s$ adjacency eigenvalues smaller than
$-2\sqrt{d-1}+\varepsilon$.
Since $0<\varepsilon\le 1$, we have $\eta^{-1/2}=\sqrt{2}\,\varepsilon^{-1/2}\ge 1$.
Therefore
\[
k=\left\lceil N_d\,\eta^{-1/2}\right\rceil
\le N_d\,\eta^{-1/2}+1
\le (\sqrt{2}N_d+1)\varepsilon^{-1/2}.
\]
Define
\[
c_2(d,\eta):=\frac{d-2}{4d}(d-1)^{-4k}.
\]
Then every $n$-vertex $d$-regular Ramanujan graph with $n\ge n_2(d,\eta)$ has at least $c_2(d,\eta)\cdot n$ adjacency eigenvalues smaller than $-2\sqrt{d-1}+\varepsilon$. After adjusting constants depending only on $d$, we obtain
\[
c_2(d,\eta)\ge {c_d}^{-\frac{1}{\sqrt{\varepsilon}}}
\]
for some constant $c_d>1$ depending only on $d$. This completes the proof of Theorem~\ref{thm:ramanujan}.
\qed

\section{Trees and the You--Liu conjecture}\label{sec:trees}

In this section, we prove Theorem~\ref{thm:tree-ratio}, thereby confirming Conjecture~\ref{conj:you-liu}.

\begin{proof}[Proof of Theorem~\ref{thm:tree-ratio}]
For \(n=3\), the only tree is \(P_3=S_3\), so the statement is immediate.
Henceforth we may assume \(n\ge 4\).

For the $n$-vertex star \(S_n\), we have \(\operatorname{Spec}(L_{S_n})=\{0,1^{(n-2)},n\}\), implying that \(\lambda_2(S_n)/\lambda_n(S_n)=1/n\). 
Thus it remains to prove that every \(n\)-vertex non-star tree \(T\) satisfies
$\frac{\lambda_2(T)}{\lambda_n(T)}<\frac1n$.

\medskip
\noindent
\underline{\textbf{Choosing a centroid vertex.}}
Recall that a vertex \(v\in V(T)\) is called a \emph{centroid} of \(T\) if every connected component of \(T-v\) has size at most \(|V(T)|/2\). Let \(v\) be a centroid of \(T\), and write $d:=\deg(v)$. After deleting \(v\), let the connected components of \(T-v\) be \(C_1,\dots,C_d\), and write \(s_i:=|C_i|\) for \(1\le i\le d\), where, after relabeling if necessary, \(s_1\ge s_2\ge \cdots \ge s_d\ge 1\). Set
\[s:=s_1 \qquad \text{and} \qquad t:=n-s-1.\]
Since \(v\) is a centroid, every component of \(T-v\) has size at most \(n/2\), hence $s\le \frac n2$. Also, since \(s\) is the largest among the \(d\) positive integers \(s_1,\dots,s_d\) with sum \(n-1\), we have $s\ge \frac{n-1}{d}$. Because \(T\) is not a star, we have \(d\le n-2\). Moreover, \(v\) cannot be a leaf: indeed, if \(\deg(v)=1\), then \(T-v\) has a component of size \(n-1>n/2\), contradicting the centroid property. Therefore \(d\ge 2\). Altogether, we have
\begin{equation}\label{eq:s-range}
\frac{n-1}{d}\le s\le \frac n2\qquad \text{and} \qquad
2\le d\le n-2.
\end{equation}

\medskip
\noindent
\underline{\textbf{An upper bound for \(\lambda_2(T)\).}}
Let $B:=\bigcup_{i=2}^d C_i$, so that \(|B|=t=n-s-1\). Consider vectors \(\vec{x}\in \mathbb{R}^{V(T)}\) that are constant on each of the three blocks \(C_1\), \(\{v\}\), and \(B\), say
\[
x_u=
\begin{cases}
a,& u\in C_1,\\
c,& u=v,\\
b,& u\in B.
\end{cases}
\]
If in addition \(\vec{x}\perp \vec{1}\), then
\begin{equation}\label{eq:orthogonality}
sa+c+tb=0.
\end{equation}

Since \(\vec{x}\) is constant on \(C_1\) and on \(B\), only edges joining different
blocks contribute to \(\vec{x}^{\top}L_{T}\vec{x}\). There is exactly one edge between
\(v\) and \(C_1\), and exactly \(d-1\) edges between \(v\) and \(B\). Hence
\begin{equation}\label{eq:energy}
\vec{x}^{\top}L_{T}\vec{x}=(a-c)^2+(d-1)(b-c)^2,
\end{equation}
while
\begin{equation}\label{eq:norm}
\vec{x}^{\top}\vec{x}=sa^2+c^2+tb^2.
\end{equation}
Let \(U\subseteq \vec{1}^{\perp}\) be the subspace of all such three-block-constant vectors $\vec{x}$. By the variational characterization of $\lambda_2(T)$, we know that
\begin{equation}\label{eq:lambda2-upper-U}
\lambda_2(T)
\le
\min_{0\neq \vec{x}\in U}\frac{\vec{x}^{\top}L_{T}\vec{x}}{\vec{x}^{\top}\vec{x}}.
\end{equation}

Now set
$\vec{y}:=(c,a,b)^{\top}\in \mathbb{R}^3$,
and define
\[
M=
\begin{pmatrix}
d & -1 & -(d-1)\\
-1& 1  & 0\\
-(d-1)&0&d-1
\end{pmatrix},
\qquad
D=\operatorname{diag}(1,s,t).
\]
Then \eqref{eq:energy}--\eqref{eq:norm} become $\vec{x}^{\top}L_{T}\vec{x}=\vec{y}^{\top}M\vec{y}$ and $\vec{x}^{\top}\vec{x}=\vec{y}^{\top}D\vec{y}$, and the orthogonality condition \eqref{eq:orthogonality} is exactly $\vec{y}^{\top}D\vec{1}_3=0$, where $\vec{1}_3:=(1,1,1)^{\top}$. Therefore
\begin{equation}\label{eq:restricted-min}
\min_{0\neq \vec{x}\in U}\frac{\vec{x}^{\top}L_{T}\vec{x}}{\vec{x}^{\top}\vec{x}}
=
\min_{\substack{0\neq \vec{y}\in \mathbb{R}^3\\ \vec{y}^{\top}D\vec{1}_3=0}}
\frac{\vec{y}^{\top}M\vec{y}}{\vec{y}^{\top}D\vec{y}}.
\end{equation}

Because \(D\) is positive definite, we may set $A:=D^{-1/2}MD^{-1/2}$ and $\vec{u}:=D^{1/2}\vec{1}_3$. Then \(A\) is a real symmetric \(3\times 3\) matrix. For every \(\vec{y}\in \mathbb{R}^3\), writing $\vec{z}:=D^{1/2}\vec{y}$ (equivalently, $\vec{y}=D^{-1/2}\vec{z}$), we have
\[
\vec{y}^{\top}M\vec{y}=\vec{z}^{\top}A\vec{z},
\qquad
\vec{y}^{\top}D\vec{y}=\vec{z}^{\top}\vec{z},
\qquad
\vec{y}^{\top}D\vec{1}_3=\vec{z}^{\top}\vec{u}.
\]
Hence \eqref{eq:restricted-min} becomes
\[
\min_{\substack{0\neq \vec{y}\in \mathbb{R}^3\\ \vec{y}^{\top}D\vec{1}_3=0}}
\frac{\vec{y}^{\top}M\vec{y}}{\vec{y}^{\top}D\vec{y}}
=
\min_{\substack{0\neq \vec{z}\in \mathbb{R}^3\\ \vec{z}\perp \vec{u}}}
\frac{\vec{z}^{\top}A\vec{z}}{\vec{z}^{\top}\vec{z}}.
\]

Next, since \(M\vec{1}_3=0\), we obtain
\[
A\vec{u}
=
D^{-1/2}MD^{-1/2}D^{1/2}\vec{1}_3
=
D^{-1/2}M\vec{1}_3
=
0.
\]
So \(\vec{u}\) is an eigenvector of the symmetric matrix \(A\) with
eigenvalue \(0\). 
For \(\vec{z}\in \mathbb R^3\), writing \(\vec{z}=D^{1/2}\vec{y}\), we have
\[
\vec{z}^\top A \vec{z} = \vec{y}^\top M \vec{y} = (a-c)^2+(d-1)(b-c)^2 \ge 0.
\]
Thus \(A\) is positive semidefinite. 
It follows that \(0\) is
the smallest eigenvalue of \(A\).

Since \(A\) is symmetric, there exists an orthonormal basis
of eigenvectors \(\{\vec{e}_1,\vec{e}_2,\vec{e}_3\}\) with
\(
A\vec{e}_i=\mu_i \vec{e}_i
\)
for $i=1,2,3$.
We may assume that
\(0=\mu_1\le \mu_2\le \mu_3,\)
and \(\vec{e}_1=\vec{u}/\|\vec{u}\|\). In particular, the subspace \(\vec{u}^\perp\) is
spanned by \(\vec{e}_2,\vec{e}_3\). By the Rayleigh--Ritz theorem for the restriction of
\(A\) to \(\vec{u}^\perp\),
\[
\min_{\substack{0\neq \vec{z}\in \mathbb{R}^3\\ \vec{z}\perp \vec{u}}}
\frac{\vec{z}^{\top}A\vec{z}}{\vec{z}^{\top}\vec{z}}
=
\mu_2.
\]
Now \(\mu\) is an eigenvalue of \(A=D^{-1/2}MD^{-1/2}\) if and only if
$\det(A-\mu I)=0$,
which is equivalent, because \(D\) is invertible, to \(\det(M-\mu D)=0\). A direct computation gives
\[
\det(M-xD)
=
x\Bigl(
-stx^2+[dst+(d-1)s+t]x-(d-1)n
\Bigr).
\]
Therefore the other two eigenvalues are precisely the two roots of
\[
p_s(x):=
stx^2-[dst+(d-1)s+t]x+(d-1)n.
\]
Let \(\rho(s)\) denote the smaller positive root of \(p_s(x)=0\). Then
\(\rho(s)=\mu_2\), and hence, by \eqref{eq:lambda2-upper-U} and
\eqref{eq:restricted-min},
\begin{equation}\label{eq:lambda2-le-rho}
\lambda_2(T)\le \rho(s).
\end{equation}

\medskip
\noindent
\underline{\textbf{Proof of \(\rho(s)<(d+1)/n\).}}
Define $g(s):=n^2 p_s\left(\frac{d+1}{n}\right)$. A straightforward simplification gives
\[
g(s)=
(d+1)(dn-d-1)s^2
-(d+1)(dn^2-dn+d-3n+1)s
+n(dn^2-dn+d-n^2-n+1).
\]
By \eqref{eq:s-range}, we have
$s\in \left[\frac{n-1}{d},\,\frac n2\right]$.
We claim that $g(s)<0$ always holds.
Since \((d+1)(dn-d-1)>0\), the function \(g(s)\) is convex in \(s\), which attains its maximum at an endpoint.
Hence, to prove the claim, it suffices to evaluate \(g(s)\) at the two endpoints $(n-1)/d$ and $n/2$.
First, we consider
\[
g\left(\frac{n-1}{d}\right)
=
\frac{(d-1)(d-n+1)(dn^2-dn+d-n+1)}{d^2}.
\]
By~\eqref{eq:s-range}, we know that \(d-n+1<0\) and
\(dn^2-dn+d-n+1\ge dn-n+1=(d-1)n+1>0.\)
Therefore
\begin{equation}\label{eq:g-left-neg}
g\left(\frac{n-1}{d}\right)<0.
\end{equation}
Next, we consider 
\begin{equation}\label{eq:g-right}
g\left(\frac n2\right)
=
-\frac n4\,E(d),
\end{equation}
where $E(d)=(n^2-n+2)(d-2)^2+(n^2-6n+8)(d-2)+(2n^2-9n+6)$. 
For \(n\ge 4\), clearly we have \(E(d)>0\), and hence by \eqref{eq:g-right} we know that
\(g\left(\frac n2\right)<0.\)
Combining this with \eqref{eq:g-left-neg}, we have
\[
p_s\left(\frac{d+1}{n}\right)=\frac{g(s)}{n^2}<0.
\]

On the other hand, $p_s(0)=(d-1)n>0$, and the quadratic coefficient of \(p_s\) is \(st>0\). Hence, the smaller positive root \(\rho(s)\) of \(p_s\) must satisfy \(\rho(s)<\frac{d+1}{n}\). Combining this with \eqref{eq:lambda2-le-rho}, we obtain
\begin{equation}\label{eq:lambda2-upper-final}
\lambda_2(T)<\frac{d+1}{n}.
\end{equation}

\medskip
\noindent
\underline{\textbf{A lower bound for \(\lambda_n(T)\) and the conclusion.}}
Consider the principal submatrix \(H\) of \(L_T\) indexed by the vertex set \(\{v\}\cup N(v)\). Then \(H\) is a \((d+1)\times(d+1)\) matrix. Since each neighbour of \(v\) may have additional neighbours outside \(\{v\}\cup N(v)\), we may write
\[
H=L_{S_{d+1}}+\operatorname{diag}(0,\varepsilon_1,\dots,\varepsilon_d)
\]
for some integers \(\varepsilon_1,\dots,\varepsilon_d\ge 0\).
Therefore, by Cauchy interlacing for principal submatrices,
\begin{equation}\label{eq:lambdan-lower}
\lambda_n(T)\ge \lambda_{\max}(H)\ge \lambda_{\max}(L_{S_{d+1}})=d+1.
\end{equation}
Combining \eqref{eq:lambda2-upper-final} with \eqref{eq:lambdan-lower}, we can derive 
\[
\frac{\lambda_2(T)}{\lambda_n(T)}
<
\frac{(d+1)/n}{d+1}
=
\frac1n.
\]
Thus every non-star tree \(T\) satisfies
$\frac{\lambda_2(T)}{\lambda_n(T)}<\frac1n$.
Since equality \(\lambda_2(T)/\lambda_n(T)=1/n\) holds for \(S_n\), we conclude
that for every tree \(T\) on \(n\ge 3\) vertices,
$\frac{\lambda_2(T)}{\lambda_n(T)}\le \frac1n$,
with equality if and only if \(T\cong S_n\).
\end{proof}

\section{From Laplacian Eigenratio to Hamiltonicity via Expansion}\label{sec:hamiltonicity}

In this section, we prove Theorem~\ref{thm:GHmain}, confirming Gu's Hamiltonicity conjecture. Recall that a \emph{Hamilton cycle} in a graph $G$ is a cycle containing all vertices of $G$.

Serving as the principal tool for proving Theorem~\ref{thm:GHmain}, we first follow \cite{HamExpanders} to introduce the definition of $C$-expanders. This concept is intimately tied to spectral graph properties. For a subset $X\subseteq V(G)$, we denote
\[
N(X):=\{v\in V(G)\setminus X:\exists\,u\in X\text{ such that }uv\in E(G)\}.
\]
\begin{definition}[$C$-expander]\label{def:C-expander}
    Let $C>0$ be a constant. An $n$-vertex graph $G$ with $n \ge 3$ is called a \emph{$C$-expander} if it satisfies the following two conditions:
    \begin{itemize}
        \item[(a)]  $|N(X)| \ge C|X|$ for every subset $X \subseteq V(G)$ with $|X| < n/(2C)$;
        \item[(b)]  $e(X, Y) \ge 1$ for any two disjoint subsets $X, Y \subseteq V(G)$ with $|X|, |Y| \ge n/(2C)$.
    \end{itemize}
\end{definition}

Recently, Dragani\'c, Montgomery, Correia, Pokrovskiy and Sudakov proved the following breakthrough result in \cite{HamExpanders}, establishing a bridge between Hamiltonicity and expansion properties.

\begin{theorem}[\cite{HamExpanders}, Theorem 1.4]\label{thm:C-expander}
There exists an absolute constant $C_0>0$ such that for any $C\geq C_0$, every $C$-expander is Hamiltonian.
\end{theorem}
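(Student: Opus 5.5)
This statement is quoted from~\cite{HamExpanders} and is used here as a black box, so within this paper the proof is simply the citation. If I had to reconstruct an argument, I would follow the general strategy of~\cite{HamExpanders}, which combines three ingredients in order. The overall architecture is: build an absorbing path, cover almost everything else by a long path, and close the cycle using the connecting lemma. The main obstacle, as I describe below, is the absorbing structure itself.

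\textbf{Step 1: robust connecting lemma.} First I would record that a $C$-expander $G$ (for large $C$) is extendable in the sense of Friedman--Pippenger and Haxell. Using condition (a), small sets expand by a factor $C$. Using condition (b), any two linear-size sets are joined by an edge. Together these let one embed, and then extend, bounded-degree trees and paths of length $O(\log n/\log C)$ while avoiding a forbidden set of size $o(n)$ that need not be structured. In particular, any two prescribed vertices can be joined by a short path avoiding a small used set.

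\textbf{Step 2: almost-spanning path.} Next I would find a path covering all but a small set of vertices, for example by a DFS argument (after removing a reservoir) together with Pósa rotation--extension. Condition (b) ensures that the leftover set $R$ has size below $n/(2C)$, since otherwise the DFS would extend the path.

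\textbf{Step 3: absorbing structure.} The expected main obstacle is absorbing the leftover set $R$ without any minimum-degree or pseudorandom hypotheses. My first attempt would be to build in advance a flexible absorbing structure from the extendability machinery. Concretely, I would use a bounded-degree ``sorting network'' or template graph whose vertices are replaced by short connecting paths, so that for every small set of leftover vertices there is a spanning path through the structure with fixed endpoints. The difficulty is guaranteeing that this works for every possible leftover set simultaneously, given only the weak two-part expansion in Definition~\ref{def:C-expander}; the corresponding argument is the heart of~\cite{HamExpanders}.
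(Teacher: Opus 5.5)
Your first sentence matches the paper: it gives no proof of this statement and imports it as Theorem~1.4 of \cite{HamExpanders}, so the citation is the whole argument here. One caution about your optional reconstruction. Step~1 is false as stated. Condition~(a) only forces minimum degree at least $C$, which may be a constant, so a forbidden set of size $O(1)$ can contain the entire neighbourhood of a prescribed vertex, and then no connecting path avoiding it exists; this is exactly why the extendability method keeps the used structure as an extendable subgraph instead of avoiding arbitrary small sets. Also, the DFS argument in Step~2 leaves fewer than $n/C$ vertices uncovered, not fewer than $n/(2C)$.
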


In the next lemma, we show that any graph with a large Laplacian eigenratio must be a $C$-expander.

\begin{lemma}\label{lem:C-expander}
Let $C \ge 2$. Every graph on $n \ge 3$ vertices satisfying $\frac{\lambda_2}{\lambda_n} > 1 - \frac{1}{C}$ is a $C$-expander.
\end{lemma}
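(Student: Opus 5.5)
The plan is to use the standard consequences of the Rayleigh-quotient bounds $\lambda_2\|\vec f-\bar f\vec 1\|^2\le \vec f^{\top}L_G\vec f\le \lambda_n\|\vec f-\bar f\vec 1\|^2$, in the form of Haemers' separation inequality for the Laplacian. Write $r:=\lambda_2/\lambda_n>1-\frac1C$. The inequality I want is: if $X,Y\subseteq V(G)$ are disjoint with $e(X,Y)=0$, then
\[
\frac{|X|\,|Y|}{(n-|X|)(n-|Y|)}\le \Bigl(\frac{\lambda_n-\lambda_2}{\lambda_n+\lambda_2}\Bigr)^2=\Bigl(\frac{1-r}{1+r}\Bigr)^2=:\varepsilon^2 .
\]
From $r>1-\frac1C$ one gets $\varepsilon<\frac{1/C}{2-1/C}=\frac{1}{2C-1}$.

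To prove the separation inequality, I would take the test vector $\vec f=a\mathbf 1_X-b\mathbf 1_Y$ with $a,b>0$ and project it onto $\vec 1^{\perp}$. Since $e(X,Y)=0$, the energy splits as
\[
\vec f^{\top}L_G\vec f=a^2|\partial X|+b^2|\partial Y|.
\]
The single-set bounds
\[
\lambda_2\frac{|X|(n-|X|)}{n}\le|\partial X|\le\lambda_n\frac{|X|(n-|X|)}{n},
\]
and similarly for $Y$, then control each boundary term. Comparing the lower bound $\lambda_2\|\vec f-\bar f\vec 1\|^2$ with the upper bounds on $|\partial X|$ and $|\partial Y|$, and optimizing the ratio $a/b$, yields the displayed inequality. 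This is Haemers' argument transplanted from the adjacency to the Laplacian setting; the key point is that the cross term $2ab|X||Y|/n$ in $\|\vec f-\bar f\vec 1\|^2$ is not compensated by any edge term, because $e(X,Y)=0$. If convenient I would simply cite the known Laplacian version from \cite{Haemers95}.

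Condition (b) is then immediate. Suppose $|X|,|Y|\ge n/(2C)$ and $e(X,Y)=0$. Since $t\mapsto t/(n-t)$ is increasing,
\[
\frac{|X|\,|Y|}{(n-|X|)(n-|Y|)}\ge\Bigl(\frac{1/(2C)}{1-1/(2C)}\Bigr)^2=\frac{1}{(2C-1)^2}>\varepsilon^2,
\]
which contradicts the separation inequality.

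For condition (a), take $X$ with $0<|X|=x<n/(2C)$ and set $Y:=V(G)\setminus(X\cup N(X))$, so that $e(X,Y)=0$. Suppose for contradiction that $|N(X)|<Cx$. Then
\[
n-|Y|=x+|N(X)|<(C+1)x \quad\text{and}\quad |Y|>n-(C+1)x>\frac{n(C-1)}{2C}.
\]
Plugging into $x|Y|\le\varepsilon^2(n-x)(n-|Y|)<\varepsilon^2(n-x)(C+1)x$ gives $|Y|<\varepsilon^2(C+1)n$. This contradicts the lower bound on $|Y|$ provided $(C-1)(2C-1)^2\ge 2C(C+1)$, which holds for all $C\ge3$.

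The main obstacle is the boundary case $C=2$, where this crude estimate is not enough. There I would first keep the exact form
\[
x|Y|\le\varepsilon^2(n-x)(x+|N(X)|)
\]
with $|Y|=n-x-|N(X)|$ and solve it for $|N(X)|$ directly, rather than bounding $n-x$ by $n$. Failing that, I would supplement it with the elementary estimate
\[
|N(X)|\ge\frac{|\partial X|}{\Delta}\ge\frac{\lambda_2}{\lambda_n}\cdot\frac{|X|(n-|X|)}{n},
\]
which uses $\Delta\le\lambda_n$ and Lemma~\ref{lem:lambdan-delta}, and covers the range of very small $|X|$. For the application to Theorem~\ref{thm:GHmain}, only large $C\ge C_0$ is needed, so the argument above already suffices there.
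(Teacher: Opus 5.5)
Your route is the paper's: Haemers' separation inequality, the choice $Y=V(G)\setminus(X\cup N(X))$ for (a), and the same monotonicity argument for (b). The gap is in (a). Your estimate (replace $n-x$ by $n$ and $|N(X)|$ by $Cx$) gives a contradiction only when $(C-1)(2C-1)^2\ge 2C(C+1)$. That holds for $C$ above roughly $2.21$, but not near $C=2$: at $C=2$ it yields only $n/4<|Y|<n/3$. So the lemma as stated, for all $C\ge2$, is not proved. Your remark that only large $C$ matters for Theorem~\ref{thm:GHmain} does not rescue the statement itself.

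Your first remedy is the right one, and it is what the paper does, but you must carry it out. With $m=|N(X)|$, the separation inequality reads $x(n-x-m)\le\varepsilon^2(n-x)(x+m)$. This rearranges to
\[
\frac{m}{x}\;\ge\;\frac{1-\varepsilon^2}{\frac{x}{n-x}+\varepsilon^2}\;>\;\frac{1-(2C-1)^{-2}}{(2C-1)^{-1}+(2C-1)^{-2}}\;=\;2(C-1)\;\ge\;C .
\]
Here the middle expression is decreasing in both $\frac{x}{n-x}<\frac{1}{2C-1}$ and $\varepsilon^2<\frac{1}{(2C-1)^2}$. This shows exactly where $C\ge2$ enters. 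The paper does the same computation in the form $z/x\ge 1/((1-x)f(r_0)+x)$, using $\frac{1}{(2C-1)^2}\le\frac{C-1}{(C+1)(2C-1)}$, which is again equivalent to $C\ge2$.

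Your fallback cannot work. The chain $|N(X)|\ge|\partial X|/\Delta\ge r\,|X|(n-|X|)/n$ gives an expansion factor below $1$ for every size of $X$, never $C\ge 2$.

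A secondary point: your sketched derivation of the separation inequality yields the wrong constant. Suppose you use only $\lambda_2\|\vec f-\bar f\vec 1\|^2\le a^2|\partial X|+b^2|\partial Y|$ together with the single-set bounds $|\partial X|\le\lambda_n|X|(n-|X|)/n$. Optimizing $a/b$ then gives only $\frac{|X||Y|}{(n-|X|)(n-|Y|)}\le\bigl(\frac{\lambda_n-\lambda_2}{\lambda_2}\bigr)^2$. With that weaker bound your proof of (b) breaks: it gives $<\frac{1}{(C-1)^2}$, which exceeds $\frac{1}{(2C-1)^2}$.

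To get Haemers' constant, also use $\vec g=a\mathbf 1_X+b\mathbf 1_Y$. Since $e(X,Y)=0$, it has the same energy $a^2|\partial X|+b^2|\partial Y|$, while its centered norm carries the cross term $-2ab|X||Y|/n$. Hence
$\lambda_2(a^2P+b^2Q+2abR)\le\lambda_n(a^2P+b^2Q-2abR)$, where $P=\frac{|X|(n-|X|)}{n}$, $Q=\frac{|Y|(n-|Y|)}{n}$, $R=\frac{|X||Y|}{n}$. Taking $a=\sqrt Q$, $b=\sqrt P$ gives exactly $\frac{R^2}{PQ}\le\bigl(\frac{\lambda_n-\lambda_2}{\lambda_n+\lambda_2}\bigr)^2$. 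Alternatively, simply cite the inequality, as the paper does.
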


Consequently, Theorem~\ref{thm:GHmain} follows immediately from the Hamiltonian criterion for expanders (Theorem~\ref{thm:C-expander}).

\begin{proof}[Proof of Theorem~\ref{thm:GHmain}, assuming Lemma~\ref{lem:C-expander}]
We prove that the theorem holds with
\[
c = 1 - \frac{1}{2\max\{2, C_0\}},
\]
where \(C_0\) is the constant from Theorem~\ref{thm:C-expander}. Assume Lemma~\ref{lem:C-expander} holds. Set \(C := \max\{2, C_0\}\) and \(c := 1 - 1/(2C)\). If \(\frac{\lambda_2}{\lambda_n} \ge c = 1 - \frac{1}{2C}\), then in particular \(\frac{\lambda_2}{\lambda_n} > 1 - \frac{1}{C}\). By Lemma~\ref{lem:C-expander}, \(G\) is a \(C\)-expander. Since \(C \ge C_0\), Theorem~\ref{thm:C-expander} implies that \(G\) is Hamiltonian. Thus Theorem~\ref{thm:GHmain} holds with this choice of \(c\).
\end{proof}

\subsection{Proof of Lemma~\ref{lem:C-expander}}

To prove Lemma~\ref{lem:C-expander}, we use the following well-known eigenvalue separation inequality; see, for instance, \cite[Theorem~3.1]{GH22} or \cite[Lemma~6.1]{Haemers95}.

\begin{theorem}[Haemers~\cite{Haemers95}]\label{Thm:XY}
Let $X$ and $Y$ be two disjoint subsets of an $n$-vertex graph $G$ such that there is no edge between $X$ and $Y$. Denote by $0=\lambda_1\le\cdots\le\lambda_n$ the eigenvalues of the Laplacian matrix of $G$. Then
\[
\frac{|X|\,|Y|}{(n-|X|)(n-|Y|)} \le \left( \frac{\lambda_n - \lambda_2}{\lambda_n + \lambda_2} \right)^{\!2}.
\]
\end{theorem}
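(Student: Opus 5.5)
The plan is the classical ``shifted Laplacian'' argument: use two test vectors in $\vec{1}^{\perp}$ and control their bilinear form by the spread of the spectrum on $\vec{1}^{\perp}$. If $X$ or $Y$ is empty the left-hand side is $0$ and there is nothing to prove, so assume both are nonempty. Set
\[
\theta:=\frac{\lambda_n+\lambda_2}{2},\qquad \rho:=\frac{\lambda_n-\lambda_2}{2}.
\]
The eigenvalues of $L_G$ restricted to $\vec{1}^{\perp}$ all lie in $[\lambda_2,\lambda_n]$. Hence the symmetric operator $L_G-\theta I$ maps $\vec{1}^{\perp}$ into itself with operator norm at most $\rho$ there. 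By Cauchy--Schwarz, for all $\vec{u},\vec{w}\perp\vec{1}$,
\[
|\vec{u}^{\top}(L_G-\theta I)\vec{w}|\le \rho\,\|\vec{u}\|\,\|\vec{w}\|.
\]

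Next I would take
\[
\vec{u}:=\mathbf 1_X-\frac{|X|}{n}\vec{1},\qquad \vec{w}:=\mathbf 1_Y-\frac{|Y|}{n}\vec{1},
\]
which are both orthogonal to $\vec{1}$. Since $L_G\vec{1}=0$, we get $\vec{u}^{\top}L_G\vec{w}=\mathbf 1_X^{\top}L_G\mathbf 1_Y$. This equals $-e(X,Y)$, because $X\cap Y=\varnothing$ kills the diagonal contribution; by hypothesis it is therefore $0$. Using $X\cap Y=\varnothing$ again, a direct computation gives
\[
\vec{u}^{\top}\vec{w}=-\frac{|X||Y|}{n},\qquad \|\vec{u}\|^2=\frac{|X|(n-|X|)}{n},\qquad \|\vec{w}\|^2=\frac{|Y|(n-|Y|)}{n}.
\]
Consequently $\vec{u}^{\top}(L_G-\theta I)\vec{w}=\theta|X||Y|/n$.

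Plugging these into the norm bound gives
\[
\theta\,\frac{|X||Y|}{n}\le \rho\,\frac{\sqrt{|X||Y|(n-|X|)(n-|Y|)}}{n}.
\]
Squaring and cancelling $|X||Y|$ yields $\theta^2|X||Y|\le\rho^2(n-|X|)(n-|Y|)$. If $\lambda_2>0$ then $\theta>0$, and dividing by $\theta^2(n-|X|)(n-|Y|)$ gives exactly the stated inequality; note that $n-|X|,n-|Y|\ge 1$ since both sets are nonempty and disjoint. If $\lambda_2=0$ the right-hand side equals $1$, the inequality is trivial because $|X|\le n-|Y|$ and $|Y|\le n-|X|$, and so no division issue arises.

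No step is a serious obstacle. The only point needing care is the choice of $\theta$ as the midpoint of $[\lambda_2,\lambda_n]$, which is exactly what produces the ratio $(\lambda_n-\lambda_2)/(\lambda_n+\lambda_2)$. Beyond that, one must check that the $\vec{1}$-components drop out of $\vec{u}^{\top}L_G\vec{w}$ and that the operator-norm bound is applied only to vectors in $\vec{1}^{\perp}$.
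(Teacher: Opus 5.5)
Your proof is correct and complete. The centered indicators $\mathbf 1_X-\frac{|X|}{n}\vec 1$ and $\mathbf 1_Y-\frac{|Y|}{n}\vec 1$ lie in $\vec 1^{\perp}$ and have $L_G$-bilinear form $-e(X,Y)=0$. Their inner product is $-|X||Y|/n$. The bound $\rho$ on the norm of $L_G-\theta I$ restricted to $\vec 1^{\perp}$ then gives $\theta^2|X||Y|\le\rho^2(n-|X|)(n-|Y|)$, which is the claim after dividing.

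The paper gives no proof of this inequality. It quotes it from Haemers (and Gu) and uses it only as a black box in Lemma~\ref{lem:C-expander}, so there is no internal argument to compare with. Your shifted-Laplacian argument, taking $\theta$ as the midpoint of $[\lambda_2,\lambda_n]$, is the standard proof of this separation inequality and makes the citation self-contained.

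One small caveat concerns your sentence ``if $\lambda_2=0$ the right-hand side equals $1$''. It tacitly assumes $\lambda_n>0$; for an edgeless graph the right-hand side is $0/0$. This is a degeneracy of the statement, not of your argument. The undivided form you actually derive, $(\lambda_n+\lambda_2)^2|X||Y|\le(\lambda_n-\lambda_2)^2(n-|X|)(n-|Y|)$, holds in every case, including empty $X$ or $Y$. In the paper's application, the hypothesis $\lambda_2/\lambda_n>1-1/C$ excludes the degenerate case anyway.
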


With Theorem~\ref{Thm:XY} in hand, we now proceed to the proof of Lemma~\ref{lem:C-expander}.
\begin{proof}[Proof of Lemma~\ref{lem:C-expander}]
    Define $f(r)=\left(\frac{1-r}{1+r}\right)^{\!2}$, which is decreasing on $r\in(0,1]$.
    Let $r_0=\frac{\lambda_2}{\lambda_n} > 1-\frac{1}{C}$.
    Since $C\ge 2$, we have $r_0 > \frac{C-1}{C}$, and therefore
    \begin{equation}\label{eq:f(r)}
        \left(\frac{\lambda_n-\lambda_2}{\lambda_n+\lambda_2}\right)^{\!2}
        =f(r_0)
        < f\!\left(\frac{C-1}{C}\right)
        =\frac{1}{(2C-1)^2}
        \le \frac{C-1}{(C+1)(2C-1)} .
    \end{equation}

   \noindent\textbf{(a).}
Let $X\subseteq V(G)$ with $|X|<\frac{n}{2C}$.
If $X=\varnothing$, then the claim is trivial. Hence assume $X\neq\varnothing$.
Set $Y=V(G)\setminus (X\cup N(X))$.
    Then $X$ and $Y$ are disjoint and $e(X,Y)=0$.
    Write $|X|=xn$, $|Y|=yn$, and $|X\cup N(X)|=zn$, so that $y+z=1$.
    By Theorem~\ref{Thm:XY},
    \[
    \frac{xy}{(1-x)(1-y)}
    = \frac{|X|\,|Y|}{(n-|X|)(n-|Y|)}
    \le \left(\frac{\lambda_n-\lambda_2}{\lambda_n+\lambda_2}\right)^{\!2}
    =f(r_0).
    \]
    Rearranging gives
    $\frac{y}{1-y}\le \frac{1-x}{x}\,f(r_0)$,
    and since $y=1-z$, this yields
    \[
    \frac{1-z}{z}\le \frac{1-x}{x}\,f(r_0),
    \qquad\text{or equivalently}\qquad
    \frac{z}{x}\ge \frac{1}{(1-x)f(r_0)+x}.
    \]
    Because $f(r_0)<\frac{C-1}{(C+1)(2C-1)}$ by~\eqref{eq:f(r)} and $x\le\frac{1}{2C}$, we obtain
    \[
    \frac{z}{x}
    \ge \frac{1}{1-(1-x)\left(1-f(r_0)\right)}
    \ge \frac{1}{1-\bigl(1-\frac{1}{2C}\bigr)
                \left(1-\frac{C-1}{(C+1)(2C-1)}\right)}
    = C+1 .
    \]
    Consequently,
    \[
    \frac{|N(X)|}{|X|}
    = \frac{|X\cup N(X)|-|X|}{|X|}
    = \frac{z-x}{x}
    \ge (C+1)-1 = C .
    \]

    \noindent\textbf{(b).}
    Assume, for contradiction, that there exist disjoint subsets $X,Y\subseteq V(G)$ with
    $|X|,|Y|\ge\frac{n}{2C}$ and $e(X,Y)=0$.
    Set $x=\frac{|X|}{n}$, $y=\frac{|Y|}{n}$; then $x,y\ge\frac{1}{2C}$.
    Applying Theorem~\ref{Thm:XY} again gives
    \[
    \frac{xy}{(1-x)(1-y)}
    \le \left(\frac{\lambda_n-\lambda_2}{\lambda_n+\lambda_2}\right)^{2}
    = f(r_0)
    < \frac{1}{(2C-1)^2},
    \]
    where the last inequality follows from~\eqref{eq:f(r)}.
    But the assumption $x,y\ge\frac{1}{2C}$ implies
    \[
    \frac{xy}{(1-x)(1-y)}
    \ge \frac{(\frac{1}{2C})^2}{\bigl(1-\frac{1}{2C}\bigr)^2}
    = \frac{1}{(2C-1)^2},
    \]
    which contradicts the previous inequality.
    This proves that $e(X,Y)\ge 1$ for any disjoint $X,Y$ with $|X|,|Y|\ge\frac{n}{2C}$.

    Both conditions (a) and (b) of Definition~\ref{def:C-expander} are satisfied. Therefore $G$ is a $C$-expander.
\end{proof}

\section{Concluding remarks}\label{sec:conclusion}

Our main results show that, in the simple-graph setting, Spielman's conjecture exhibits a mixed and rather delicate behavior. It holds for all regular graphs and for every average degree \(d\le 2\), but fails for infinitely many average degrees strictly greater than \(2\). The overall picture beyond this remains unclear, which leads naturally to the following problem.

\begin{problem}
Characterize all rational numbers $d>2$ for which Conjecture~\ref{conj:spielman} holds. 
\end{problem}

\noindent In particular, it would be interesting to understand this for all sufficiently large $d$.

More broadly, our results suggest that the Laplacian eigenratio is a natural graph parameter capturing meaningful structural properties. For instance, Lemma~\ref{lem:C-expander} shows that a large Laplacian eigenratio enforces strong expansion. 
It would be interesting to further investigate its connections with other graph invariants, and to better understand its role in governing global properties of graphs.

\section*{Acknowledgements}
The authors thank Noga Alon and Zilin Jiang for helpful discussions on an earlier version of the manuscript.
This work is supported by National Key Research and Development Program of China 2023YFA1010201, National Natural Science Foundation of China grant 12125106, and Innovation Program for Quantum Science and Technology 2021ZD0302902.

\bigskip

\bigskip

Jie Ma: \href{mailto:jiema@ustc.edu.cn}{\nolinkurl{jiema@ustc.edu.cn}}

\medskip

Quanyu Tang: \href{mailto:tang_quanyu@163.com}{\nolinkurl{tang_quanyu@163.com}}

\medskip

Yuchang Wang: \href{mailto:wyc1999@mail.ustc.edu.cn}{\nolinkurl{wyc1999@mail.ustc.edu.cn}}

\medskip

Zhiheng Zheng: \href{mailto:lhotse@mail.ustc.edu.cn}{\nolinkurl{lhotse@mail.ustc.edu.cn}}

\end{document}